\documentclass[10pt,reqno]{amsart}

\usepackage{a4wide}
\usepackage{dsfont}            
\usepackage{amssymb,amsmath,amsfonts}
\usepackage{mathrsfs}
\usepackage{graphicx}          
\usepackage[latin10]{inputenc} 
\usepackage[all]{xy}

\newtheorem{proposition}{Proposition}[section]
  \newtheorem{theorem}[proposition]{Theorem}
  \newtheorem{corollary}[proposition]{Corollary}
  \newtheorem{lemma}[proposition]{Lemma}
\theoremstyle{definition}
  \newtheorem{definition}[proposition]{Definition}
  \newtheorem{remark}[proposition]{Remark}

\newcommand{\cst}{\ifmmode\mathrm{C}^*\else{$\mathrm{C}^*$}\fi}
\newcommand{\st}{\;\vline\;}
\newcommand{\ii}{\mathrm{i}}
\newcommand{\CC}{\mathbb{C}}
\newcommand{\RR}{\mathbb{R}}
\newcommand{\KK}{\mathbb{K}}
\newcommand{\GG}{\mathbb{G}}
\newcommand{\is}[2]{\left\langle#1\,\vline\,#2\right\rangle}
\newcommand{\tens}{\otimes}
\newcommand{\vtens}{\,\bar{\otimes}\,}
\newcommand{\id}{\mathrm{id}}
\newcommand{\comp}{\circ}
\newcommand{\eps}{\varepsilon}
\newcommand{\uu}{{\scriptscriptstyle\mathrm{u}}}
\newcommand{\I}{\mathds{1}}
\newcommand{\cH}{\mathscr{H}}
\newcommand{\cR}{\mathscr{R}}

\newcommand{\HH}{\mathbb{H}}
\newcommand{\sM}{\mathsf{M}}
\newcommand{\sN}{\mathsf{N}}
\newcommand{\sJ}{\mathsf{J}}
\newcommand{\sL}{\mathsf{L}}
\newcommand{\cJ}{\mathcal{J}}
\newcommand{\sR}{\mathsf{R}}
\newcommand{\hh}[1]{\widehat{#1}}
\newcommand{\op}{\text{\rm\tiny{op}}}

\newcommand{\flip}{\boldsymbol{\sigma}}
\newcommand{\ww}{\mathrm{W}}
\newcommand{\WW}{{\mathds{V}\!\!\text{\reflectbox{$\mathds{V}$}}}}
\newcommand{\Ww}{\mathds{W}}
\newcommand{\wW}{\text{\reflectbox{$\Ww$}}\:\!} 
\newcommand{\dd}[1]{\widetilde{#1}}
\newcommand{\IM}[1]{\overline{\mathrm{im}\,#1}}
\newcommand{\bh}{\boldsymbol{h}}
\newcommand{\bhL}{\boldsymbol{h}_{\text{\tiny\rm{L}}}}
\newcommand{\starr}{\,\underline{*}\,}
\newcommand{\staru}{\,\overline{*}\,}

\DeclareMathOperator{\C}{C}
\DeclareMathOperator{\B}{B}
\DeclareMathOperator{\vN}{vN}
\DeclareMathOperator{\Mor}{Mor}
\DeclareMathOperator{\M}{M}
\DeclareMathOperator{\Dom}{Dom}

\DeclareMathOperator{\Linf}{\mathnormal{L}^\infty\;\!\!}
\DeclareMathOperator{\Ltwo}{\mathnormal{L}^2\;\!\!}
\DeclareMathOperator{\Lone}{\mathnormal{L}^1\;\!\!}

\numberwithin{equation}{section}

\raggedbottom

\author{Pawe{\l} Kasprzak}
\address{Department of Mathematical Methods in Physics, Faculty of Physics, University of Warsaw, Poland}
\email{pawel.kasprzak@fuw.edu.pl}

\author{Fatemeh Khosravi}
\address{Department of Pure Mathematics, Ferdowsi University of Mashhad, Iran}
\email{fa.khosravi@stu-mail.um.ac.ir}

\author{Piotr M.~So{\l}tan}
\address{Department of Mathematical Methods in Physics, Faculty of Physics, University of Warsaw, Poland}
\email{piotr.soltan@fuw.edu.pl}

\title[Integrable actions and quantum subgroups]{Integrable actions and quantum subgroups}

\subjclass[2010]{Primary: 46L89 Secondary: 46L85, 46L52}

\keywords{Locally compact quantum group, quantum subgroup, homomorphism of quantum groups, integrable action}

\begin{document}

\begin{abstract}
We study homomorphisms of locally compact quantum groups from the point of view of integrability of the associated action. For a given homomorphism of quantum groups $\Pi\colon\mathbb{H}\to\mathbb{G}$ we introduce quantum groups $\mathbb{H}/\!\ker{\Pi}$ and $\overline{\mathrm{im}\,\Pi}$ corresponding to the classical quotient by kernel and closure of image. We show that if the action of $\mathbb{H}$ on $\mathbb{G}$ associated to $\Pi$ is integrable then $\mathbb{H}/\!\ker\Pi\cong\overline{\mathrm{im}\,\Pi}$ and characterize such $\Pi$. As a particular case we consider an injective continuous homomorphism $\Pi\colon{H}\to{G}$ between locally compact groups $H$ and $G$. Then $\Pi$ yields an integrable action of $H$ on $L^\infty\;\!\!(G)$ if and only if its image is closed and $\Pi$ is a homeomorphism of $H$ onto $\mathrm{im}\,\Pi$.

We also give characterizations of open quantum subgroups and of compact quantum subgroups in terms of integrability and show that a closed quantum subgroup always gives rise to an integrable action. Moreover we prove that quantum subgroups closed in the sense of Woronowicz whose associated homomorphism of quantum groups yields an integrable action are closed in the sense of Vaes.
\end{abstract}

\maketitle


\newlength{\sw}
\settowidth{\sw}{$\scriptstyle\sigma-\text{\rm{weak closure}}$}
\newlength{\nc}
\settowidth{\nc}{$\scriptstyle\text{\rm{norm closure}}$}
\newlength{\ssw}
\settowidth{\ssw}{$\scriptscriptstyle\sigma-\text{\rm{weak closure}}$}
\newlength{\snc}
\settowidth{\snc}{$\scriptscriptstyle\text{\rm{norm closure}}$}

\section{Introduction}\label{intro}

The theory of locally compact quantum groups is by now a well established branch of mathematical research. Once the details of the definition of a locally compact quantum group have been worked out in a series of papers (cf.~e.g.~\cite{KV,KVvN,mnw}) and a lot of work has been done on examples of such objects (\cite{PodSLW,e2,azb,nazb,bicrossed,low,VaesVainerman}), a number of deep results of the theory of locally compact groups have been generalized and analyzed from the point of view of the new theory of quantum groups (\cite{impl,induced,impr}). These papers pointed the way to a more thorough analysis of such basic concepts as subgroups (\cite{DKSS}) or homomorphisms (\cite{MRW}) as well as actions (\cite{impl}, \cite[Chapter 2]{VaesPhD}, see also \cite{exa}). 

In this paper we continue this line of research by analyzing the concept of an integrable action in the context of homomorphisms of quantum groups. We show that integrability of actions associated to homomorphisms is deeply connected with the notion of a closed quantum subgroup. For a homomorphism of quantum groups we introduce and study quantum groups which are non-commutative analogs of the closure of the image of the homomorphism and the quotient by the kernel of the homomorphism. Integrability of the associated action is then equivalent to a condition which can be interpreted as compactness of the kernel and closeness of the image together with a natural isomorphism of the image with quotient by kernel.

These considerations provide a way to show that the difference between the two notions of a closed quantum subgroup analyzed in \cite{DKSS} lies precisely in the integrability of the associated action. In particular a Woronowicz-closed quantum subgroup (see the end of Section \ref{cons} or \cite[Section 3]{DKSS}) with its associated action integrable is closed (in the sense of Vaes, see \cite{DKSS}). These results have an overlap with \cite[Proposition 3.12]{DeCommer} with also practically the same technique of proof.

We also provide characterization of open and compact quantum subgroups in terms of integrability of certain natural actions associated with them (but not with the homomorphism mapping the quantum subgroup into the ambient quantum group). The former is then used in other results on integrability, while the latter is of decidedly different nature and is hence placed in the Appendix. 

Our results are also of interest in the case the considered quantum groups are in fact classical locally compact quantum groups. In particular, if $H$ and $G$ are locally compact groups and $\Pi\colon{H}\to{G}$ is an injective continuous homomorphism then the integrability of the associated action of $H$ on $\Linf(G)$ is equivalent to $\Pi$ being a homeomorphism onto its closed image.

Let us give a more detailed account of the contents of the paper. In Section \ref{Prel} we collect the main definitions and preliminary results about operator algebras and locally compact quantum groups. We discuss in detail the notions of actions, homomorphisms and closed quantum subgroups of locally compact quantum groups. We also discuss the notion of the canonical implementation of an action (\cite{impl}) which is one of our crucial tools. Section \ref{SectIntegr} is devoted to the proof of an integrability criterion for ergodic actions. This is very much in the spirit of some of the results of \cite{induced}, but the type of actions considered is different.

In Section \ref{ImKer} we introduce the quantum groups which play the role of closure of the image and quotient by kernel of a homomorphism of locally compact quantum groups. We also generalize the latter construction to introduce quotient by the kernel of an action (an example of this construction already appeared in literature). The short Section \ref{sectCan} sheds light on the canonical implementation of the action associated to a homomorphism of quantum groups and recalls an integrability criterion from \cite{impl}. Then, in Section \ref{cons} all of our main results are obtained. 

The Appendix (Section \ref{appendix}) contains discussion of consequences of our results for classical groups and a theorem characterizing compact quantum subgroups of locally compact quantum groups in terms of integrability of the canonical action on the homogeneous space. We also recall several facts about convolutions of various classes of functionals on \cst-algebras and von Neumann algebras associated to a quantum group and prove a few results necessary for the above mentioned characterization of compact quantum subgroups. Some of these results are also used earlier (in Section \ref{sectCan}).

\section{Preliminaries}\label{Prel}

We will use the language and notation of operator algebras for locally compact quantum groups. The operator algebraic prerequisites are contained e.g.~in \cite{SZ}. Some notions related to \cst-algebras like morphisms or multipliers may be found e.g.~in \cite{unbo,gen}. We will also use the concept of an operator valued weight (\cite{ovw1}) in the context of invariant weights on locally compact quantum groups (\cite{VaesPhD,impl,KVvN}). The symbol $\flip$ will always denote the flip morphism between tensor product of operator algebras. Almost all tensor products will be denoted by the symbol $\tens$ with the precise meaning depending on the context. The only exception will be the tensor product of von Neumann algebras which we will denote by $\vtens$.

\subsection{Locally compact quantum groups}\label{lcqgs}

We refer to \cite{KV,KVvN,mnw} for fundamentals of the theory of locally compact quantum groups. Our conventions are those introduced in e.g.~\cite{BS,mu,mu2,DKSS}. Thus a locally compact quantum group $\GG$ is described by a von Neumann algebra $\Linf(\GG)$ with comultiplication $\Delta_\GG\colon\Linf(\GG)\to\Linf(\GG)\vtens\Linf(\GG)$. The right Haar weight on $\Linf(\GG)$ will be denoted by $\bh$ with corresponding GNS Hilbert space denoted by $\Ltwo(\GG)$ and associated GNS map $\eta$ (\cite[Appendix B]{mnw}). The modular operator and conjugation for $\bh$ will be denoted by $\nabla$ and $J$ respectively. We will also denote by $\Lone(\GG)$ the space $\Linf(\GG)_*$ of $\sigma$-weakly continuous functionals on $\Linf(\GG)$. The left Haar weight will be denoted by $\bhL$.

The symbols $\C_0(\GG)$ and $\C_0^\uu(\GG)$ will denote the reduced and universal \cst-algebra related to $\GG$. Since $\C_0(\GG)\subset\Linf(\GG)$ we will use $\Delta_\GG$ to also denote the comultiplication on $\C_0(\GG)$, so we have $\Delta_\GG\in\Mor(\C_0(\GG),\C_0(\GG)\tens\C_0(\GG))$, while the comultiplication on $\C_0^\uu(\GG)$ will be denoted by $\Delta_\GG^\uu$. The algebras $\Linf(\GG)$, $\C_0(\GG)$ and $\C_0^\uu(\GG)$ are endowed with much more natural structure. In particular there is the \emph{scaling group} $(\tau_t)_{t\in\RR}$ and \emph{unitary antipode} $R$ (\cite{mu,KV,mu2}).

The \emph{Kac-Takesaki operator} or the \text{right regular representation} of $\GG$ is the unitary operator $\ww^\GG$ which extends the map
\[
\eta(a)\tens\eta(b)\longmapsto(\eta\tens\eta)\bigl(\Delta(a)(\I\tens{b})\bigr)
\]
to the level of $\Ltwo(\GG)\tens\Ltwo(\GG)$.

It turns out that $\ww^\GG$ is a multiplicative unitary operator on $\Ltwo(\GG)\tens\Ltwo(\GG)$ and $\GG$ is the object related to this multiplicative unitary via the theory developed in \cite{BS,mu}. Thus, in particular
\begin{itemize}
\item $\C_0(\GG)=\bigl\{(\omega\tens\id)\ww^\GG\st\omega\in\B(\Ltwo(\GG))_*\bigr\}^{\text{\tiny{norm closure}}\hspace{-\snc}}$,
\item $\Linf(\GG)=\bigl\{(\omega\tens\id)\ww^\GG\st\omega\in\B(\Ltwo(\GG))_*\bigr\}^{\text{\tiny{$\sigma$-weak closure}}\hspace{-\ssw}}$,
\item for any $x\in\Linf(\GG)$ we have $\Delta_\GG(x)=\ww^\GG(x\tens\I){\ww^\GG}^*$.
\end{itemize}

The multiplicative unitary gives rise to the \emph{dual} $\hh{\GG}$ of $\GG$ which turns out to also be a locally compact quantum group. In particular there is a right Haar weight $\hh{\bh}$ on $\Linf(\hh{\GG})$, where
\[
\Linf(\hh{\GG})=\bigl\{(\id\tens\omega)\ww^\GG\st\omega\in\B(\Ltwo(\GG))_*\bigr\}^{\text{\tiny{$\sigma$-weak closure}}\hspace{-\ssw}}.
\]
The GNS Hilbert space for $\hh{\bh}$ can and will be naturally identified with $\Ltwo(\GG)$ and in fact $\ww^{\hh{\GG}}=\flip\bigl({\ww^\GG}^*\bigr)$ and $\ww^\GG\in\M(\C_0(\hh{\GG})\tens\C_0(\GG))$. Moreover, denoting by $\hh{\nabla}$ and $\hh{J}$ the modular operator and conjugation related to $\hh{\bh}$ we have
\[
\begin{aligned}
R(x)&=\hh{J}x^*\hh{J},&&x\in\Linf(\GG),\\
\tau_t(x)&=\hh{\nabla}^{\ii{t}}x\hh{\nabla}^{-\ii{t}},&&x\in\Linf(\GG),&t\in\RR.
\end{aligned}
\]

The results of \cite{univ,mu2,MRW} provide a lift of $\ww^\GG\in\M(\C_0(\hh{\GG})\tens\C_0(\GG))$ to an element $\WW^\GG\in\M(\C_0^\uu(\hh{\GG})\tens\C_0^\uu(\GG))$ such that if $\Lambda_{\hh{\GG}}\in\Mor(\C_0^\uu(\hh{\GG}),\C_0(\hh{\GG}))$ and $\Lambda_\GG\in\Mor(\C_0^\uu(\GG),\C_0(\GG))$ are reducing morphisms (\cite[Definition 35]{mu2}) then $\ww^\GG=(\Lambda_{\hh{\GG}}\tens\Lambda_\GG)\WW^\GG$. The ``half-lifted'' versions of $\WW^\GG$ are $\Ww^\GG=(\id\tens\Lambda_\GG)\WW^\GG\in\M(\C_0^\uu(\hh{\GG}))\tens\C_0(\GG))$ and $\wW^\GG=(\Lambda_{\hh{\GG}}\tens\id)\WW^\GG\in\M(\C_0(\hh{\GG})\tens\C_0^\uu(\GG))$.

Let $\GG$ be a locally compact quantum group and let $\sL\subset\Linf(\GG)$ be a von Neumann subalgebra. We say that $\sL$ is a \emph{left coideal} if $\Delta_\GG(\sL)\subset\Linf(\GG)\vtens\sL$. If, moreover, $\sL$ satisfies $\Delta_\GG(\sL)\subset\sL\vtens\sL$ then we say that $\sL$ is an \emph{invariant subalgebra} (this terminology was introduced in \cite{TT}). Finally an invariant subalgebra $\sL$ such that $R(\sL)=\sL$ and $\tau_t(\sL)=\sL$ for all $t\in\RR$ is called a \emph{Baaj-Vaes subalgebra}. The important Baaj-Vaes theorem (\cite[Proposition 10.5]{BV}) says that if $\sL\subset\Linf(\GG)$ is a Baaj-Vaes subalgebra then there exists a locally compact quantum group $\KK$ such that $\sL=\Linf(\KK)$ and $\Delta_\KK=\bigl.\Delta_\GG\bigr|_{\sL}$.

If $\sL\subset\Linf(\GG)$ is a left coideal then by \cite[Proposition 3.5]{eqhs} the relative commutant $\sL'\cap\Linf(\hh{\GG})$ is a left coideal in $\Linf(\hh{\GG})$ called the \emph{co-dual} of $\sL$ and is denoted by $\dd{\sL}$. By \cite[Theorem 3.9]{eqhs} we have $\dd{\dd{\sL}}=\sL$ for any left coideal $\sL\subset\Linf(\GG)$.

\subsection{Actions of quantum groups}\label{subsAct}

Let $\HH$ be a locally compact quantum group. A \emph{right action} of $\HH$ on a von Neumann algebra $\sN$ is an injective normal unital $*$-homomorphism $\alpha\colon\sN\to\sN\vtens\Linf(\HH)$ such that $(\alpha\tens\id)\comp\alpha=(\id\tens\Delta_\HH)\comp\alpha$. Since most of the time we will only use right actions, the term ``action'' will always refer to a right action. Similarly to a right action we define a \emph{left action} of $\HH$ on $\sN$ as an injective normal unital $*$-homomorphism $\beta\colon\sN\to\Linf(\HH)\vtens\sN$ such that $(\id\tens\beta)\comp\beta=(\Delta_\HH\tens\id)\comp\beta$.

Let $\alpha\colon\sN\to\sN\vtens\Linf(\HH)$ be a (right) action of $\HH$ on $\sN$. The \emph{crossed product} of $\sN$ by (the action of) $\HH$ is defined as the von Neumann subalgebra of $\sN\vtens\B(\Ltwo(\HH))$ generated by $\alpha(\sN)$ and $\I\vtens\Linf(\hh{\HH})$. The crossed product will be denoted by the symbol $\sN\rtimes_\alpha\HH$. 

Let us note that there is a simple passage from right to left actions (and conversely). Indeed, if $\alpha\colon\sN\to\sN\vtens\Linf(\HH)$ is a right action of $\HH$ on $\sN$ then $\beta=\flip\comp\alpha\colon\sN\to\Linf(\HH)\vtens\sN$ is a left action of $\HH^\op$, i.e.~the locally compact quantum group $\HH$ with reversed comultiplication $\Delta_{\HH^\op}=\flip\comp\Delta_\HH$. This allows simple translation of many results about right actions to left actions.

The central notion of this paper is that of an \emph{integrable action}. The concept of integrability of an action of a locally compact group was introduced in \cite[Definition 2.6]{ConnesTakesaki} for locally compact groups. The definition can easily be generalized to Kac algebras and locally compact quantum groups as was done e.g.~in \cite[Definition 2.3.4]{VaesPhD}, \cite[Definition 1.4]{impl}. Thus an action $\alpha\colon\sN\to\sN\vtens\Linf(\HH)$ is integrable if the set
\begin{equation}\label{integr+}
\bigl\{x\in\sN_+\st(\id\tens\bhL)\alpha(x)\in\sN_+\bigr\}
\end{equation}
is $\sigma$-weakly dense in $\sN_+$. Elements of \eqref{integr+} are called \emph{integrable} for $\alpha$. Similarly an element $x\in\sN$ is \emph{square integrable} for $\alpha$ if $x^*x$ is integrable for $\alpha$. Note that integrability of a right action is with respect to the left Haar weight. Similarly we say that a left action $\beta\colon\sN\to\Linf(\HH)\vtens\sN$ is integrable if the set
\[
\bigl\{x\in\sN_+\st(\bh\tens\id)\beta(x)\in\sN_+\bigr\}
\]
is $\sigma$-weakly dense in $\sN_+$.

Let us recall briefly some of the results of \cite{impl}. If $\alpha\colon\sN\to\sN\vtens\Linf(\HH)$ is an action of $\HH$ on $\sN$ then choosing a n.s.f.~weight $\theta$ on $\sN$ one can define a weight $\widetilde{\theta}$ on $\sN\rtimes_\alpha\HH$ (\cite[Definition 3.4 and following remarks]{impl}) such that the corresponding GNS Hilbert space is $\cH_\theta\tens\Ltwo(\HH)$. Let $\widetilde{J}$ be the modular conjugation related to this weight. Then the \emph{canonical implementation} of $\alpha$ is by definition the unitary
\[
U=\widetilde{J}(J_\theta\tens\hh{J}).
\]
Some of the major results of \cite{impl} are that
\begin{itemize}
\item $U\in\B(\cH_\theta)\vtens\Linf(\HH)$,
\item $(\id\tens\Delta_\HH)U=U_{12}U_{13}$,
\item for $x\in\sN$ we have $\alpha(x)=U(x\tens\I)U^*$.
\end{itemize}
The last formula says that $U$ \emph{implements} the action $\alpha$.

Finally let us introduce two more important classes of actions: an action $\alpha\colon\sN\to\sN\vtens\Linf(\HH)$ of a locally compact quantum group $\HH$ on a von Neumann algebra $\sN$ is \emph{ergodic} if the condition $\alpha(x)=x\tens\I$ implies that $x\in\CC\I$. The action $\alpha$ is \emph{free} if the set
\[
\bigl\{(\omega\tens\id)\alpha(x)\st{x}\in\sN,\,\omega\in\sN_*\bigr\}
\]
generates the von Neumann algebra $\Linf(\HH)$.

\subsection{Homomorphism of quantum groups}

Let $\HH$ and $\GG$ be locally compact quantum groups. The notion of a homomorphism from $\HH$ to $\GG$ can be described in one of three ways. More precisely there are natural bijections between the following three sets
\begin{itemize}
\item the set of \emph{bicharacters} from $\HH$ to $\GG$, i.e.~unitaries $V\in\Linf(\hh{\GG})\vtens\Linf(\HH)$ such that
\[
(\id\tens\Delta_\HH)V=V_{12}V_{13}\quad\text{and}\quad(\Delta_{\hh{\GG}}\tens\id)V=V_{23}V_{13},
\]
\item the set of actions $\alpha\colon\Linf(\GG)\to\Linf(\GG)\vtens\Linf(\HH)$ such that
\[
(\Delta_\GG\tens\id)\comp\alpha=(\id\tens\alpha)\comp\Delta_\GG
\]
(such actions are called \emph{right quantum group homomorphisms}, cf.~\cite[Section 1]{MRW}),
\item the set of those $\pi\in\Mor(\C_0^\uu(\GG),\C_0^\uu(\HH))$ which satisfy
\[
\Delta_\HH^\uu\comp\pi=(\pi\tens\pi)\comp\Delta_\GG^\uu
\]
(such morphisms are referred to as \emph{Hopf $*$-homomorphisms}).
\end{itemize}
Each element of either of the three sets described above represents a \emph{homomorphism} from $\HH$ to $\GG$. Thus from now on we will write $\Pi\colon\HH\to\GG$ to denote a homomorphism from $\HH$ to $\GG$ and freely use its three ``incarnations'', namely a bicharacter $V\in\Linf(\hh{\GG})\vtens\Linf(\HH)$, a right quantum group homomorphism $\alpha\colon\Linf(\GG)\to\Linf(\GG)\vtens\Linf(\HH)$ and a Hopf $*$-homomorphism $\pi\in\Mor(\C_0^\uu(\GG),\C_0^\uu(\HH))$. The relationships between $V$, $\alpha$ and $\pi$ representing the same $\Pi$ are
\[
\begin{split}
V&=\bigl(\id\tens(\Lambda_\HH\comp\pi)\bigr)\wW^\GG,\\
\alpha\comp\Lambda_\GG&=(\Lambda_\GG\tens\Lambda_\HH)\comp(\id\tens\pi)\comp\Delta_\GG^\uu
\end{split}
\]
and
\begin{equation}\label{Vimpl}
\alpha(x)=V(x\tens\I)V^*,\qquad{x}\in\Linf(\GG).
\end{equation}

\subsection{Closed quantum subgroups and open quantum subgroups}\label{qsub}

Let $\GG$ and $\HH$ be locally compact quantum groups. Following \cite[Definition 2.5]{impr} (cf.~\cite{DKSS} and the discussion therein) we say that a homomorphism of quantum groups $\Pi\colon\HH\to\GG$ identifies $\HH$ with a \emph{closed quantum subgroup} of $\GG$ if there exists an injective normal $*$-homomorphism $\gamma\colon\Linf(\hh{\HH})\to\Linf(\hh{\GG})$ such that the corresponding bicharacter $V$ is given by $(\gamma\tens\id)\ww^\HH$. Equivalently one can only demand that $\gamma$ satisfy $\Delta_{\hh{\GG}}\comp\gamma=(\gamma\tens\gamma)\comp\Delta_{\hh{\HH}}$ (\cite[Theorem 3.3]{DKSS}).

It follows from \cite[Proposition 5.45]{KV} that if $\HH$ is a closed subgroup of $\GG$ then the image of $\Linf(\hh{\HH})$ under the injection $\gamma$ is a Baaj-Vaes subalgebra of $\Linf(\hh{\GG})$. In particular this shows that there is a bijection between closed quantum subgroups of $\GG$ and Baaj-Vaes subalgebras of $\Linf(\hh{\GG})$.

Now let $\GG$ and $\HH$ be locally compact quantum groups. According to \cite[Definition 2.2]{open} we say that $\HH$ is an \emph{open quantum subgroup} of $\GG$ if there is a surjective normal $*$-homomorphism $\Theta\colon\Linf(\GG)\to\Linf(\HH)$ such that $\Delta_\HH\comp\Theta=(\Theta\tens\Theta)\comp\Delta_\GG$. In this case the algebra $\Linf(\HH)$ can be identified with a corner of $\Linf(\GG)$ defined by the central support of $\Theta$ which we denote by the symbol $\I_\HH$. Let us denote the map $\Linf(\HH)\to\I_\HH\Linf(\GG)\subset\Linf(\GG)$ by $\iota$. The composition $\iota\comp\Theta$ is the conditional expectation
\[
\Linf(\GG)\ni{x}\longmapsto\I_\HH{x}\in\Linf(\GG).
\]

By \cite[Theorem 3.6]{open} an open quantum subgroup of $\GG$ is a closed quantum subgroup of $\GG$. In fact the corresponding bicharacter $V$ is $(\id\tens\Theta)\ww^\GG$ (cf.~\cite[Remark 3.7]{open}) and the image of $\Linf(\hh{\HH})$ in $\Linf(\hh{\GG})$ under the corresponding injection is the $\sigma$-weak closure of
\[
\bigl\{(\id\tens\omega)(\id\tens\Theta)\ww^\GG\st\omega\in\B(\Ltwo(\HH))_*\bigr\}.
\]

\section{Integrable ergodic actions and open quantum subgroups}\label{SectIntegr}

In this section we will prove a result on integrability of an action (Proposition \ref{PropIntegr}) which will enable us to give a characterization of open subgroups in terms of existence of certain integrable elements (Theorem \ref{openThm}). This result will in turn become useful in Section \ref{cons}. 

The following lemma is a generalization of \cite[Lemma 6.4]{KV}, cf.~also \cite[Theorem 2.1]{MRW}. The technique of proof is virtually identical to that of proof of \cite[Lemma 6.4]{KV}.

\begin{lemma}\label{leqlem}
Let $\alpha\colon\sN\to\sN\vtens\Linf(\GG)$ be an ergodic action of a locally compact quantum group $\GG$ on a von Neumann algebra $\sN$. If $P\in\sN$ is a projection such that $\alpha(P)\leq{P}\tens\I$ then $P=0$ or $P=\I$.
\end{lemma}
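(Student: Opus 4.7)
\emph{Proof proposal.}
The plan is to prove $\alpha(P)=P\tens\I$; combined with ergodicity this will force $P\in\CC\I$, and since $P$ is a projection one concludes $P\in\{0,\I\}$. The mechanism is to average $P$ against the right Haar weight $\bh$ of $\GG$, in direct analogy with the argument of \cite[Lemma 6.4]{KV}.

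First I would form $y:=(\id\tens\bh)\alpha(P)$ as an element of the extended positive cone $\widehat{\sN_+}$. From $\alpha(P)\leq{P}\tens\I$ and monotonicity of the slice $\id\tens\bh$ one obtains $y\leq\bh(\I)\,P$ in $\widehat{\sN_+}$, the right-hand side being read as $+\infty\cdot{P}$ when $\GG$ is non-compact. Then, using coassociativity $(\alpha\tens\id)\comp\alpha=(\id\tens\Delta_\GG)\comp\alpha$ together with the right invariance identity $(\id\tens\bh)\comp\Delta_\GG=\bh(\cdot)\,\I$, a short formal computation yields $\alpha(y)=y\tens\I$, so $y$ is $\alpha$-invariant.

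Next I would exploit ergodicity: for each $\lambda>0$ the truncation $z_\lambda:=y\wedge\lambda\I$ lies in $\sN_+$, inherits the invariance $\alpha(z_\lambda)=z_\lambda\tens\I$, and is therefore a scalar by ergodicity; passing to the supremum over $\lambda$ one gets $y=c\,\I$ for some $c\in[0,+\infty]$. If $P\neq\I$, choosing any unit vector $\xi\in\ker{P}$ and evaluating $y\leq\bh(\I)\,P$ on $\xi$ forces $c=0$, so $y=0$. Faithfulness of the operator-valued weight $\id\tens\bh$ then gives $\alpha(P)=0$, and injectivity of $\alpha$ yields $P=0$.

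I expect the main technical obstacle to be the bookkeeping inside the extended positive cone $\widehat{\sN_+}$ with the unbounded slice $\id\tens\bh$: specifically the identity $\alpha(y)=y\tens\I$ and the step ``$\alpha$-invariant element of $\widehat{\sN_+}$ $\Longrightarrow$ scalar multiple of $\I$'' both need to be justified via standard bounded-spectral-truncation arguments for normal operator-valued weights associated to Haar weights. This is precisely the point where the argument reproduces that of \cite[Lemma 6.4]{KV}.
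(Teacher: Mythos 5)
Your overall strategy---averaging $P$ over the group with an invariant weight and using that the average lands in the extended positive part of the fixed-point algebra---is sound and genuinely different from the paper's argument, but as written it contains one error that makes the key step fail: you average against the \emph{right} Haar weight $\bh$ and invoke ``$(\id\tens\bh)\comp\Delta_\GG=\bh(\cdot)\,\I$'' as the right invariance identity. With the paper's conventions this is backwards: right invariance reads $(\bh\tens\id)\comp\Delta_\GG=\bh(\cdot)\,\I$, while the identity you need, $(\id\tens\text{weight})\comp\Delta_\GG=\text{weight}(\cdot)\,\I$, is \emph{left} invariance and holds for $\bhL$. Your computation of $\alpha(y)=y\tens\I$ uses coassociativity $(\alpha\tens\id)\comp\alpha=(\id\tens\Delta_\GG)\comp\alpha$ and then must collapse the last two legs via exactly this identity; with $\bh$ in place of $\bhL$ this is false whenever $\GG$ is not unimodular, so $y=(\id\tens\bh)\alpha(P)$ is not $\alpha$-invariant in general. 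The repair is the one-symbol change $y:=(\id\tens\bhL)\alpha(P)$, which is also what the paper's own convention dictates (integrability of a \emph{right} action is measured with the \emph{left} Haar weight). After this change your steps are all correct: $(\id\tens\bhL)\comp\alpha$ is the standard normal faithful operator-valued weight into the fixed-point algebra (cf.~\cite{impl}), ergodicity forces $y=c\,\I$ with $c\in[0,+\infty]$ via the spectral truncations you describe, and if $P\neq\I$ then any normal state $\omega$ with $\omega(P)=0$ gives $c=y(\omega)\leq\bhL\bigl((\omega\tens\id)(P\tens\I)\bigr)=\bhL(0)=0$, whence $y=0$, then $\alpha(P)=0$ by faithfulness of the slice, and $P=0$ by injectivity of $\alpha$.

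Once corrected, your route is a genuine alternative to the paper's. The paper (following \cite[Lemma 6.4]{KV}, whose proof is in fact the unitary argument, not a weight-averaging one, so your closing attribution is off) works entirely with bounded operators: it encodes $\alpha(P)\leq P\tens\I$ as $\ww^\GG_{23}Q_{12}=Q_{12}\ww^\GG_{23}Q_{12}$, uses the modular formula of \cite{mnw} for adjoints of slices $(\id\tens\omega_{\xi,\zeta})\ww^\GG$ (involving $\hh{J}$ and $\hh{\nabla}^{\pm\frac{1}{2}}$) to obtain the reversed relation, concludes the exact equality $\alpha(P)=P\tens\I$, and only then invokes ergodicity. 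Your argument trades that modular trick for the machinery of operator-valued weights and extended positive parts: it is shorter and more conceptual granted those standard facts, but note that it never actually proves $\alpha(P)=P\tens\I$ (contrary to the plan announced in your first sentence); it goes directly to the dichotomy $P\in\{0,\I\}$, and it relies on unbounded technology where the paper's proof stays elementary at the level of bounded operators.
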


\begin{proof}
Since $\alpha$ is an injective map, $\sN$ can be embedded into $\sN\vtens\Linf(\GG)$ via this mapping. Viewing $\alpha$ as this embedding we shall denote it by $\iota$. Using this notation we have
\[
(\id\tens\Delta_\GG)\comp\iota=(\id\tens\Delta_\GG)\comp\alpha=(\alpha\tens\id)\comp\alpha=(\iota\tens\id)\tens\alpha
\]
which shows that under the identification of $\sN$ with $\iota(\sN)$ the action $\alpha$ is given by $\id\tens\Delta_\GG$. In particular, as $\alpha$ is ergodic, $\id\tens\Delta_\GG$ is ergodic on $\iota(\sN)$. Therefore it is enough to prove the lemma for $\bigl.(\id\tens\Delta_\GG)\bigr|_{\iota(\sN)}$.

Denote $\iota(P)$ by $Q$. The condition that 
\begin{equation}\label{Q1}
(\id\tens\Delta_\GG)(Q)\leq{Q}\tens\I 
\end{equation}
means $(\id\tens\Delta_\GG)(Q)Q_{12}=(\id\tens\Delta_\GG)(Q)$
or
\[
\ww^\GG_{23}Q_{12}{\ww^\GG_{23}}^*Q_{12}=\ww^\GG_{23}Q_{12}{\ww^\GG_{23}}^*,
\]
which can be rewritten as 
\begin{equation}\label{WQ}
\ww^\GG_{23}Q_{12}=Q_{12}\ww^\GG_{23}Q_{12}
\end{equation}

Let $\hh{\nabla}$ and $\hh{J}$ be the modular operator and modular conjugation for $\hh{\bh}$ --- the right Haar weight of $\hh{\GG}$. By \cite[Lemma 2.1 \& Theorem 3.5]{mnw} applied to $\hh{\GG}$, for all $\xi\in\Dom\bigl(\hh{\nabla}^{-\frac{1}{2}}\bigr)$ and all $\zeta\in\Dom\bigl(\hh{\nabla}^{\frac{1}{2}}\bigr)$ we have
\begin{equation}\label{nabla}
\bigl((\id\tens\omega_{\xi,\zeta})\ww^\GG\bigr)^*=(\id\tens\omega_{\hh{J}\hh{\nabla}^{-\frac{1}{2}}\xi,\hh{J}\hh{\nabla}^{\frac{1}{2}}\zeta})\ww^\GG.
\end{equation}
Therefore for such $\xi$ and $\zeta$ we can apply $\id\tens\id\tens\omega_{\hh{J}\hh{\nabla}^{-\frac{1}{2}}\xi,\hh{J}\hh{\nabla}^{\frac{1}{2}}\zeta}$ to both sides of \eqref{WQ} to get
\[
\bigl(\I\tens(\id\tens\omega_{\hh{J}\hh{\nabla}^{-\frac{1}{2}}\xi,\hh{J}\hh{\nabla}^{\frac{1}{2}}\zeta})(\ww^\GG)\bigr)Q
=Q\bigl(\I\tens(\id\tens\omega_{\hh{J}\hh{\nabla}^{-\frac{1}{2}}\xi,\hh{J}\hh{\nabla}^{\frac{1}{2}}\zeta})(\ww^\GG)\bigr)Q
\]
which by \eqref{nabla} is equivalent to
\[
\bigl(\I\tens(\id\tens\omega_{\xi,\zeta})(\ww^\GG)\bigr)^*Q=Q\bigl(\I\tens(\id\tens\omega_{\xi,\zeta})(\ww^\GG)\bigr)^*Q
\]
or
\[
Q\bigl(\I\tens(\id\tens\omega_{\xi,\zeta})(\ww^\GG)\bigr)=Q\bigl(\I\tens(\id\tens\omega_{\xi,\zeta})(\ww^\GG)\bigr)Q.
\]
It follows that $Q_{12}\ww^\GG_{23}=Q_{12}\ww^\GG_{23}Q_{12}$ which means that $Q_{12}=Q_{12}(\id\tens\Delta_{\GG})(Q)$, i.e.
\begin{equation}\label{Q2}
Q\tens\I\leq(\id\tens\Delta_\GG)(Q). 
\end{equation}

Combining \eqref{Q1} and \eqref{Q2} we find that $(\id\tens\Delta_\GG)(Q)=Q\tens\I$, which by ergodicity of $(\id\tens\Delta_\GG)$ on $\sN$ implies that $Q=0$ or $Q=\I$.
\end{proof}

The next proposition is very similar to \cite[Proposition 6.2]{induced}. The difference is that the action considered in \cite{induced} is the action of a closed quantum subgroup on the ambient quantum group. In particular it is not ergodic, but has a particularly simple form.

\begin{proposition}\label{PropIntegr}
Let $\alpha\colon\sN\to\sN\vtens\Linf(\GG)$ be an ergodic action of a locally compact quantum group $\GG$ on a von Neumann algebra $\sN_+$. Then $\alpha$ is integrable if and only if there exists a non-zero $x\in\sN$ integrable for $\alpha$.
\end{proposition}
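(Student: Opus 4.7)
The $(\Rightarrow)$ direction is immediate. For $(\Leftarrow)$, given a non-zero integrable $x\in\sN_+$, I would first upgrade $x$ to a non-zero integrable projection: hereditariness of $T:=(\id\tens\bhL)\comp\alpha$ together with the bound $\chi_{[\eps,\|x\|]}(x)\leq\eps^{-1}x$ shows each spectral projection $p_\eps:=\chi_{[\eps,\|x\|]}(x)$ is integrable, and for $\eps>0$ small enough, $p:=p_\eps$ is non-zero.

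Introduce the left ideal $\sN_\alpha:=\{y\in\sN\st T(y^*y)\in\sN\}$ of square-integrable elements; the inequalities $y^*a^*ay\leq\|a\|^2y^*y$ and $(y+z)^*(y+z)\leq 2(y^*y+z^*z)$, combined with hereditariness of $T$, show this is a left ideal. Since $pa^*ap\leq\|a\|^2 p$ is integrable, $\sN p\subset\sN_\alpha$, so this ideal is non-zero; let its $\sigma$-weak closure be $\sN e$ for a non-zero projection $e\in\sN$. Standard weight-theoretic arguments give that $\sigma$-weak density of $\sN_\alpha$ in $\sN$ is equivalent to $\sigma$-weak density of the cone of positive integrable elements in $\sN_+$; thus the task reduces to proving $e=\I$, which I plan to do by establishing $\alpha(e)\leq e\tens\I$ and invoking Lemma \ref{leqlem}.

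The core of the argument is a slicing computation. For $\omega\in\Lone(\GG)_+$ and $y$ integrable, the coaction identity $(\alpha\tens\id)\alpha=(\id\tens\Delta_\GG)\alpha$ combined with left invariance of $\bhL$ gives
\[
T\bigl((\id\tens\omega)\alpha(y)\bigr)=\bigl(\id\tens(\bhL\tens\omega)\Delta_\GG\bigr)\alpha(y)=\omega(\delta)\,T(y),
\]
where $\delta$ is the modular element of $\GG$ and the identity $(\bhL\tens\omega)\Delta_\GG=\omega(\delta)\bhL$ comes from the structure theory of the left Haar weight. Hence whenever $\omega$ lies in $D:=\{\omega\in\Lone(\GG)_+\st\omega(\delta)<\infty\}$, the slice $(\id\tens\omega)\alpha(y)$ is integrable; spectral truncation by $\chi_{[0,n]}(\delta)$ shows every positive element of $\Lone(\GG)$ is a weak-$*$ limit of elements of $D$.

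Now for $z\in\sN_\alpha$ and $\omega\in D$ normalised to a state, Kadison--Schwarz for the unital completely positive map $\id\tens\omega$ gives
\[
\bigl((\id\tens\omega)\alpha(z)\bigr)^*(\id\tens\omega)\alpha(z)\leq(\id\tens\omega)\alpha(z^*z),
\]
whose right-hand side is integrable by the above, so hereditariness places $(\id\tens\omega)\alpha(z)$ inside $\sN_\alpha$; therefore $(\id\tens\omega)\alpha(z)\cdot e=(\id\tens\omega)\alpha(z)$, i.e.~$(\id\tens\omega)\bigl[\alpha(z)\bigl((\I-e)\tens\I\bigr)\bigr]=0$. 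Letting $\omega$ run through the weak-$*$ dense set $D$ forces $\alpha(z)(e\tens\I)=\alpha(z)$ for every $z\in\sN_\alpha$; normality of $\alpha$ propagates this to $z\in\sN e$, and taking $z=e$ yields $\alpha(e)(e\tens\I)=\alpha(e)$, i.e.~$\alpha(e)\leq e\tens\I$. Lemma \ref{leqlem} combined with $e\neq 0$ (since $p\leq e$) then forces $e=\I$. The main obstacle is the quantum-group identity $(\bhL\tens\omega)\Delta_\GG=\omega(\delta)\bhL$ and the accompanying weak-$*$ density of $D$, which rely on the structure of the left Haar weight and the modular element of a locally compact quantum group.
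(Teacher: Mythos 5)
Your proof is correct and follows essentially the same route as the paper's: the left ideal of square-integrable elements whose $\sigma$-weak closure is a principal ideal $\sN e$, the relative $\delta$-invariance of $\bhL$ to show that slices $(\id\tens\omega)\alpha(y)$ of integrable elements remain integrable, the Kadison inequality to keep such slices inside the ideal, and finally $\alpha(e)\leq e\tens\I$ combined with Lemma \ref{leqlem}. The only cosmetic differences are your preliminary reduction to an integrable projection (which is unnecessary, since $x^{\frac{1}{2}}$ already lies in the ideal) and your formulation of the invariance through $\omega(\delta)$ and the set $D$, which the paper handles more precisely with vector states $\omega_\xi$, $\xi\in\Dom\bigl(\delta^{\frac{1}{2}}\bigr)$, giving the constant $\bigl\|\delta^{\frac{1}{2}}\xi\bigr\|^2$ and a separating family of functionals.
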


\begin{proof}
Clearly if $\alpha$ is integrable then there are plenty of integrable elements. Assume now that there exists a non-zero integrable element.

Let $\cJ=\bigl\{x\in\sN\st{x^*x}\text{ is integrable for }\alpha\bigr\}$. Then $\cJ$ is a non-zero left ideal in $\sN$ and so its $\sigma$-weak closure $\sJ$ is a principal left ideal, i.e.~there exists a non-zero projection $P\in\sN$ such that $\sJ=\sN{P}$. 

Let $x\in\sN_+$ be integrable for $\alpha$. Let $\delta$ be the modular element for $\GG$ and let $\xi\in\Dom\bigl(\delta^{\frac{1}{2}}\bigr)$. Consider $\omega\in\Lone(\GG)_+$ given by $\omega(y)=\is{\xi}{y\xi}$ for all $y\in\Linf(\GG)$. Using right $\delta$-invariance of $\bhL$ we get
\[
(\id\tens\bhL)\Bigl(\alpha\bigl((\id\tens\omega)\alpha(x)\bigr)\Bigr)=\bigl\|\delta^{\frac{1}{2}}\xi\bigr\|^2(\id\tens\bhL)(\alpha(x)).
\]
i.e.~$(\id\tens\omega)\alpha(x)$ is integrable. Now for any $y\in\sN$ and $\omega\in\Lone(\GG)$ as above the Kadison inequality gives
\[
\bigl((\id\tens\omega)\alpha(y)\bigr)^*\bigl((\id\tens\omega)\alpha(y)\bigr)\leq\|\omega\|(\id\tens\omega)\alpha(y*y)
\]
and thus it follows that for any $y\in\cJ$ we have $(\id\tens\omega)\alpha(y)\in\cJ$. By continuity for any $y\in\sJ$ we have $(\id\tens\omega)\alpha(y)\in\sJ$. In particular $(\id\tens\omega)\alpha(P)\in\sJ$ and consequently $(\id\tens\omega)\alpha(P)=(\id\tens\omega)\alpha(P)P$. As this is true for separating set of functionals $\omega$, we conclude that $\alpha(P)(P\tens\I)=\alpha(P)$, i.e.~$\alpha(P)\leq{P\tens\I}$. By lemma \ref{leqlem} $P=\I$, so $\cJ$ is $\sigma$-weakly dense in $\sN$.
\end{proof}

Proposition \ref{PropIntegr} has its obvious analog for left actions (cf.~Section \ref{subsAct}).

Now let $\GG$ and $\HH$ be locally compact quantum groups and assume that $\HH$ is an open subgroup of $\GG$. As mentioned in Section \ref{qsub} $\HH$ is then also a closed quantum subgroup and the image of $\Linf(\hh{\HH})$ in $\Linf(\hh{\GG})$ under the corresponding embedding is the $\sigma$-weak closure of
\begin{equation}\label{linfhh}
\bigl\{(\id\tens\omega)(\id\tens\Theta)\ww^\GG\st\omega\in\B(\Ltwo(\HH))_*\bigr\},
\end{equation}
where $\Theta\colon\Linf(\GG)\to\Linf(\HH)$ is the surjection corresponding to the embedding $\HH\subset\GG$. 

Let $\iota\colon\Linf(\HH)\hookrightarrow\Linf(\GG)$ be the (non-unital) embedding as in Section \ref{qsub}. Now take $\omega$ in \eqref{linfhh} to be of the form $\vartheta\comp\iota$, where $\vartheta\in\B(\Ltwo(\GG))_*$. Then
\[
(\id\tens\omega)\bigl((\id\tens\Theta)(\ww^\GG)\bigr)=(\id\tens\vartheta)\bigl((\I\tens\I_\HH)\ww^\GG\bigr)=(\id\tens\vartheta\I_\HH)\ww^\GG,
\]
where $\I_\HH$ is the central support of $\Theta$ and $\vartheta\I_\HH$ denotes the functional $x\mapsto\vartheta(\I_\HH{x})$. 

Now assume that $\vartheta$ is a \emph{$\Ltwo$-bounded} functional on $\Linf(\GG)$. This means that there exists a vector $\xi_\vartheta\in\Ltwo(\GG)$ such that
\[
\vartheta(x)=\is{\xi_\vartheta}{\eta(x)}
\]
for all $x\in\Dom(\eta)$. It is easy to see that for any $c\in\Linf(\GG)$ the functional $\vartheta{c}$ is still $\Ltwo$-bounded:
\[
(\vartheta{c})(x)=\vartheta(cx)=\is{\xi_\vartheta}{\eta(cx)}=\is{\xi_\vartheta}{c\eta(x)}=\is{c^*\xi_\vartheta}{\eta(x)}
\]
for all $x\in\Dom(\eta)$. It is known (\cite[Section 6]{mnw}) that the set of $\Ltwo$-bounded functionals is norm dense in $\Lone(\GG)$.

\begin{theorem}\label{openThm}
Let $\HH$ be a closed quantum subgroup of $\GG$ via $\gamma\colon\Linf(\hh{\HH})\hookrightarrow\Linf(\hh{\GG})$. Then $\HH$ is open in $\GG$ if and only if there exists a non-zero element $x\in\Linf(\hh{\HH})$ such that $\gamma(x)$ is square-integrable with respect to the right Haar measure of $\hh{\GG}$.
\end{theorem}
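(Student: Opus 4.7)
For the forward direction, assume $\HH$ is open, with surjection $\Theta\colon\Linf(\GG)\to\Linf(\HH)$ and central support $\I_\HH\neq 0$. The discussion just before the theorem shows that $\gamma(\Linf(\hh\HH))$ contains every $(\id\tens\vartheta\I_\HH)\ww^\GG$ with $\vartheta\in\B(\Ltwo(\GG))_*$. For $\Ltwo$-bounded $\vartheta$ the functional $\vartheta\I_\HH$ remains $\Ltwo$-bounded with implementing vector $\I_\HH\xi_\vartheta$, and the standard fact that $(\id\tens\vartheta')\ww^\GG$ lies in the domain of the GNS map of $\hh\bh$ for every $\Ltwo$-bounded $\vartheta'$ (cf.~\cite[Section 6]{mnw}) gives square-integrability of $(\id\tens\vartheta\I_\HH)\ww^\GG$. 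Since $\I_\HH\neq 0$, one may pick $\vartheta$ with $\I_\HH\xi_\vartheta\neq 0$, producing a non-zero element $x\in\Linf(\hh\HH)$ with $\gamma(x)$ square-integrable.

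For the reverse direction, let $\cJ=\bigl\{y\in\Linf(\hh\HH)\st\hh\bh(\gamma(y)^*\gamma(y))<\infty\bigr\}$; by hypothesis $\cJ$ is a non-zero left ideal, and its $\sigma$-weak closure equals $\Linf(\hh\HH)P$ for some non-zero projection $P$. I will show $P=\I$ following the strategy of Proposition \ref{PropIntegr}. Given a positive normal $\omega\in\Linf(\hh\HH)_*$, extend $\omega\comp\gamma^{-1}$ to a positive normal $\tilde\omega$ on $\Linf(\hh\GG)$ (possible because normal states on a von~Neumann subalgebra of $\B(\Ltwo(\hh\GG))$ are vector states and extend trivially). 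Since $\Delta_{\hh\GG}\comp\gamma=(\gamma\tens\gamma)\comp\Delta_{\hh\HH}$, one has
\[
\gamma\bigl((\id\tens\omega)\Delta_{\hh\HH}(y)\bigr)=(\id\tens\tilde\omega)\Delta_{\hh\GG}(\gamma(y)),
\]
and Kadison's inequality combined with right-invariance of $\hh\bh$ bounds its square-$\hh\bh$ by $\tilde\omega(\I)^2\hh\bh(\gamma(y)^*\gamma(y))<\infty$ for $y\in\cJ$. Hence $\cJ$ is stable under every $(\id\tens\omega)\Delta_{\hh\HH}$, and separation of positive $\omega$ yields $\Delta_{\hh\HH}(P)\leq P\tens\I$. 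Ergodicity of the right action $\Delta_{\hh\HH}$ of $\hh\HH$ on $\Linf(\hh\HH)$ together with Lemma \ref{leqlem} then force $P=\I$, so $\cJ$ is $\sigma$-weakly dense.

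Density of $\cJ$ makes $y\mapsto\hh\bh(\gamma(y))$ a normal, semifinite, faithful, right-invariant weight on $\Linf(\hh\HH)$, so uniqueness of Haar weights yields $\hh\bh\comp\gamma=c\cdot\hh\bh_{\hh\HH}$ for some $c\in(0,\infty)$. Equivalently, there is an isometry $T\colon\Ltwo(\hh\HH)\to\Ltwo(\hh\GG)$ intertwining the GNS representations of $\Linf(\hh\HH)$ and $\gamma(\Linf(\hh\HH))$. \emph{The main obstacle} is the concluding step: identifying $TT^*\in\gamma(\Linf(\hh\HH))'$ with a central projection $\I_\HH\in\Linf(\GG)$, whence $\Theta\colon\Linf(\GG)\to\I_\HH\Linf(\GG)\cong\Linf(\HH)$ is read off as the corner map and witnesses openness of $\HH$. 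I expect this to follow by transporting $T$ across the bicharacter $V=(\gamma\tens\id)\ww^\HH$ and invoking the characterization of open quantum subgroups from \cite{open}.
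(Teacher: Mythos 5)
Your forward direction is essentially the paper's argument, and most of your reverse direction is sound: the left ideal $\cJ$ of square-integrable elements, its stability under the slices $(\id\tens\omega)\Delta_{\hh\HH}$ (via the extension $\tilde\omega$, Kadison's inequality and right invariance of $\hh{\bh}$), and Lemma \ref{leqlem} applied to the ergodic action $\Delta_{\hh\HH}$ do give $P=\I$, hence semifiniteness of $\hh{\bh}\comp\gamma$ and, by uniqueness of right Haar weights, proportionality to the Haar weight of $\hh{\HH}$. (This is in effect the paper's route: it views $\bigl.\Delta_{\hh\GG}\bigr|_{\gamma(\Linf(\hh\HH))}$ as an ergodic left action of $\hh\GG$ and invokes the left-action analogue of Proposition \ref{PropIntegr}, whose proof is exactly your ideal argument; your variant with $\Delta_{\hh\HH}$ and the extended functional $\tilde\omega$ works too. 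Two blemishes: normal states on a von Neumann subalgebra of $\B(\Ltwo(\GG))$ need not be vector states --- they are restrictions of normal states of $\B(\Ltwo(\GG))$ given by positive trace-class operators, so the extension still exists; and the existence of an $\Ltwo$-bounded $\vartheta$ with $\I_\HH\xi_\vartheta\neq 0$ is not automatic --- it should be deduced, as in the paper, from norm density of $\Ltwo$-bounded functionals in $\Lone(\GG)$, since a priori nothing rules out that all the vectors $\xi_\vartheta$ lie in $(\I-\I_\HH)\Ltwo(\GG)$.)

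The genuine gap is the step you yourself flag as ``the main obstacle'': passing from ``$\hh{\bh}\comp\gamma$ is semifinite, equivalently proportional to the Haar weight of $\hh{\HH}$'' to ``$\HH$ is open in $\GG$''. This implication is not a formality; it is precisely the content of \cite[Theorem 7.5]{open}, which is what the paper cites at exactly this point (and which the paper's Corollary \ref{openCor} then records as an equivalence). Your proposed substitute --- the GNS isometry $T$ and the identification of $TT^*$ with the central projection $\I_\HH$ --- is not a proof: a priori $TT^*$ is only a projection in $\gamma\bigl(\Linf(\hh\HH)\bigr)'\subseteq\B(\Ltwo(\GG))$, and you give no argument that it belongs to $\Linf(\GG)$, that it is central (or group-like) there, nor how to manufacture from it a normal surjection $\Theta\colon\Linf(\GG)\to\Linf(\HH)$ intertwining the comultiplications, which is what openness requires. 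Producing exactly these data from semifiniteness of the restricted dual Haar weight is the substance of the proof of \cite[Theorem 7.5]{open}. So either cite that theorem, as the paper does, and your argument closes; or you owe a complete proof of this implication, which your sketch does not supply.
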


\begin{proof}
We can identify $\Linf(\hh{\HH})$ with its image under $\gamma$, so $\Linf(\hh{\HH})\subset\Linf(\hh{\GG})$. Assume that $x\in\Linf(\hh{\HH})$ is non-zero and square integrable for $\hh{\bh}$. The map $\Delta_{\hh{\GG}}$ restricted to $\Linf(\hh{\HH})$ can be viewed as an ergodic left action $\alpha$ of $\hh{\GG}$ on $\hh{\HH}$:
\[
\beta=\bigl.\Delta_{\hh{\GG}}\bigr|_{\Linf(\hh{\HH})}\colon\Linf(\hh{\HH})\longrightarrow\Linf(\hh{\GG})\vtens\Linf(\hh{\HH})
\]
and the element $x^*x$ is non-zero and integrable for $\beta$. By a left analog of Proposition \ref{PropIntegr} $\beta$ is integrable, and by right invariance, $\hh{\bh}$ restricted to $\Linf(\hh{\HH})$ is semifinite. Now \cite[Theorem 7.5]{open} guarantees that $\HH$ is open.

Assume now that $\HH$ is open. From the discussion preceding the statement of the theorem we know that the image of $\Linf(\hh{\HH})$ in $\Linf(\hh{\GG})$ is the closure of the set
\[
\bigl\{(\id\tens\vartheta\I_\HH)\ww^\GG\st\vartheta\in\Lone(\GG)\bigr\}.
\]
Since the set of $\Ltwo$-bounded functionals is dense in $\Lone(\GG)$, there must exist an $\Ltwo$-bounded $\vartheta$ such that $x=(\id\tens\vartheta\I_\HH)\ww^\GG$ is non-zero. Moreover, we also know that $\vartheta\I_\HH$ is $\Ltwo$-bounded, so $x^*\in\Dom(\hh{\eta})$ by \cite[Theorem 6.4(4)]{mnw}.
\end{proof}

\begin{corollary}\label{openCor}
Let $\GG$ and $\HH$ be locally compact quantum groups and let $\HH$ be a closed quantum subgroup of $\GG$ with corresponding injection $\gamma\colon\Linf(\hh{\HH})\hookrightarrow\Linf(\hh{\GG})$. Let $\hh{\bh}$ be the right Haar weight on $\Linf(\hh{\GG})$. Then $\HH$ is open in $\GG$ if and only if the right Haar measure of $\hh{\HH}$ is proportional to $\hh{\bh}\comp\gamma$.
\end{corollary}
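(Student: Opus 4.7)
The plan is to deduce the corollary from Theorem \ref{openThm} by combining it with the uniqueness, up to a positive scalar, of the right Haar weight on a locally compact quantum group. Throughout I identify $\Linf(\hh{\HH})$ with its image $\gamma(\Linf(\hh{\HH}))\subset\Linf(\hh{\GG})$ and write $\varphi=\hh{\bh}\comp\gamma$.

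The first task is to verify that $\varphi$ is a normal, faithful, right-invariant weight on $\Linf(\hh{\HH})$. Normality and faithfulness are immediate from the corresponding properties of $\hh{\bh}$ together with normality and injectivity of $\gamma$. For right invariance I would apply $\hh{\bh}\tens\id$ to both sides of the intertwining relation $\Delta_{\hh{\GG}}\comp\gamma=(\gamma\tens\gamma)\comp\Delta_{\hh{\HH}}$, use right invariance of $\hh{\bh}$ on the left to reduce it to $\varphi(x)\I$, and rewrite the right-hand side as $\gamma\bigl((\varphi\tens\id)\Delta_{\hh{\HH}}(x)\bigr)$; injectivity and unitality of $\gamma$ then force $(\varphi\tens\id)\Delta_{\hh{\HH}}(x)=\varphi(x)\I$. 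After this step the only property still missing for $\varphi$ to be a right Haar weight on $\hh{\HH}$ is semifiniteness.

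For the implication from openness to proportionality, assume that $\HH$ is open in $\GG$. Theorem \ref{openThm} provides a non-zero $x\in\Linf(\hh{\HH})$ with $\varphi(x^*x)<\infty$. The restriction $\beta$ of $\Delta_{\hh{\GG}}$ to $\Linf(\hh{\HH})$ is an ergodic left action of $\hh{\GG}$ on $\Linf(\hh{\HH})$ (as already observed in the proof of Theorem \ref{openThm}), and right invariance of $\hh{\bh}$ yields $(\hh{\bh}\tens\id)\beta(x^*x)=\varphi(x^*x)\I$, exhibiting $x^*x$ as a non-zero integrable element for $\beta$. Invoking the left analogue of Proposition \ref{PropIntegr} produces integrability of $\beta$, which (again by right invariance) is precisely semifiniteness of $\varphi$. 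Combined with the previous paragraph this makes $\varphi$ a right Haar weight on $\hh{\HH}$, and uniqueness of such weights up to a positive scalar gives the claimed proportionality.

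The converse is essentially immediate: if $\varphi$ is proportional to the right Haar weight of $\hh{\HH}$ it is in particular semifinite, hence some non-zero $x\in\Linf(\hh{\HH})$ satisfies $\varphi(x^*x)<\infty$, i.e.~$\gamma(x)$ is square-integrable for $\hh{\bh}$, and Theorem \ref{openThm} yields openness of $\HH$. The one non-routine step in the whole argument is semifiniteness of $\varphi$ in the forward direction, but this is exactly what Proposition \ref{PropIntegr} is designed to extract from the existence of a single non-zero integrable element; everything else reduces to standard verifications together with uniqueness of the Haar weight.
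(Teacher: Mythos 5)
Your proposal is correct and follows essentially the same route as the paper's own proof: both reduce proportionality to semifiniteness of $\hh{\bh}\comp\gamma$ via uniqueness of the right Haar weight, use Theorem \ref{openThm} to pass from semifiniteness to openness, and in the converse direction feed the square-integrable element produced by Theorem \ref{openThm} into the ergodic action $\bigl.\Delta_{\hh{\GG}}\bigr|_{\gamma(\Linf(\hh{\HH}))}$ and the left analogue of Proposition \ref{PropIntegr}. The only difference is that you spell out the routine verification that $\hh{\bh}\comp\gamma$ is normal, faithful and right-invariant, which the paper leaves implicit.
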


\begin{proof}
This has practically been proved in Theorem \ref{openThm}. To make things more precise, we note that the condition that the Haar weight on $\Linf(\hh{\HH})$ is proportional to $\hh{\bh}\comp\gamma$ is equivalent to $\hh{\bh}\comp\gamma$ being semifinite on $\Linf(\hh{\HH})$. Thus if $\hh{\bh}\comp\gamma$ is semifinite then $\HH$ is open in $\GG$ by Theorem \ref{openThm}. Conversely if $\HH$ is open in $\GG$ then Theorem \ref{openThm} says that there is a non-zero element of $\gamma\bigl(\Linf(\hh{\HH})\bigr)$ integrable for $\hh{\bh}$. By right invariance this element is integrable for the ergodic action of $\hh{\GG}$ on $\gamma\bigl(\Linf(\hh{\HH})\bigr)$ given by $\Delta_{\hh{\HH}}$. Thus by the left analog of Proposition \ref{PropIntegr} this action is integrable, which again by right invariance means that $\hh{\bh}\comp\gamma$ is semifinite.
\end{proof}

In terms of integrability of actions the characterization of open quantum subgroups is given by the following corollary:

\begin{corollary}\label{openInt}
Let $\GG$ and $\HH$ be locally compact quantum groups and let $\HH$ be a closed subgroup of $\GG$ with corresponding injection $\gamma\colon\Linf(\hh{\HH})\hookrightarrow\Linf(\hh{\GG})$. Then $\HH$ is open in $\GG$ if and only if the left action of $\hh{\GG}$ on $\gamma\bigl(\Linf(\hh{\HH})\bigr)$ given by $\Delta_{\hh{\GG}}$ is integrable.
\end{corollary}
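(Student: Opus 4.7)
The strategy is to reduce the corollary to Corollary \ref{openCor} by showing that integrability of the left action $\beta=\bigl.\Delta_{\hh{\GG}}\bigr|_{\gamma(\Linf(\hh{\HH}))}$ is equivalent to semifiniteness of $\hh{\bh}\comp\gamma$ on $\Linf(\hh{\HH})$.

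I would first check that $\beta$ is a well-defined ergodic left action of $\hh{\GG}$ on $\gamma\bigl(\Linf(\hh{\HH})\bigr)$. Since $\HH$ is a closed quantum subgroup of $\GG$, the image $\gamma\bigl(\Linf(\hh{\HH})\bigr)$ is a Baaj--Vaes subalgebra of $\Linf(\hh{\GG})$ (see Section \ref{qsub}), and in particular a left coideal, so $\Delta_{\hh{\GG}}$ maps it into $\Linf(\hh{\GG})\vtens\gamma\bigl(\Linf(\hh{\HH})\bigr)$. Ergodicity is immediate because the fixed points of $\Delta_{\hh{\GG}}$ in the ambient algebra $\Linf(\hh{\GG})$ are already the scalars, so a fortiori this holds on the subalgebra.

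The key step is the observation that right invariance of $\hh{\bh}$ yields the identity
\[
(\hh{\bh}\tens\id)\Delta_{\hh{\GG}}(y)=\hh{\bh}(y)\,\I
\]
for $y\in\Linf(\hh{\GG})_+$, understood in the operator-valued-weight sense when $y$ is not $\hh{\bh}$-integrable. Applied to $y\in\gamma\bigl(\Linf(\hh{\HH})\bigr)_+$ this shows that $y$ is integrable for $\beta$ if and only if $\hh{\bh}(y)<\infty$, i.e.\ if and only if $y$ is integrable for $\hh{\bh}\comp\gamma$. Consequently $\beta$ is integrable precisely when the set of $\hh{\bh}$-integrable positive elements is $\sigma$-weakly dense in $\gamma\bigl(\Linf(\hh{\HH})\bigr)_+$, which is the semifiniteness of $\hh{\bh}\comp\gamma$ on $\Linf(\hh{\HH})$.

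By (the proof of) Corollary \ref{openCor} semifiniteness of $\hh{\bh}\comp\gamma$ is in turn equivalent to $\HH$ being open in $\GG$, so combining the two equivalences finishes the argument. The main delicacy, as always in this circle of ideas, is the proper interpretation of $(\hh{\bh}\tens\id)\comp\Delta_{\hh{\GG}}$ as an operator-valued weight and the passage between its scalar and infinite values on a non-integrable element; this is however standard and is exactly what is already implicitly exploited in the proofs of Proposition \ref{PropIntegr} and Corollary \ref{openCor}.
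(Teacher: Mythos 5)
Your proposal is correct and follows essentially the paper's own (implicit) route: the paper states this corollary as an immediate consequence of Corollary \ref{openCor}, the link being exactly your observation that right invariance of $\hh{\bh}$ identifies the elements integrable for the restricted-comultiplication action with the $\hh{\bh}$-integrable elements, so that integrability of the action is the same as semifiniteness of $\hh{\bh}\comp\gamma$. Your extra remarks (well-definedness via the Baaj--Vaes property, ergodicity) are harmless; ergodicity is in fact not needed in your argument since you bypass Proposition \ref{PropIntegr}, whose role is already absorbed into Corollary \ref{openCor}.
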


\section{Image and kernel of a homomorphism}\label{ImKer}

Let $\GG$ and $\HH$ be locally compact quantum groups throughout this section we will be focused on a homomorphism $\Pi\colon\HH\to\GG$ with corresponding bicharacter $V\in\Linf(\hh{\GG})\vtens\Linf(\HH)$ and a right quantum group homomorphism $\alpha\colon\Linf(\GG)\to\Linf(\GG)\vtens\Linf(\HH)$. We will introduce a closed quantum subgroup of $\GG$ and a ``quotient'' quantum group of $\HH$ which correspond to the closure of the image and quotient by the kernel of $\Pi$. 

\subsection{Closure of image of $\Pi$}

The next proposition is a slightly stronger version of \cite[Lemma 1.2]{open}.

\begin{proposition}\label{BV2}
Let $\GG$ and $\HH$ be locally compact quantum groups and let $\Pi\colon\HH\to\GG$ be a homomorphism of quantum groups with corresponding bicharacter $V\in\Linf(\hh{\GG})\vtens\Linf(\HH)$. Let
\[
\sL={\bigl\{(\id\tens\zeta)V\st\zeta\in\Lone(\HH)\bigr\}}^{\sigma-\text{\rm{weak closure}}\hspace{-\sw}}.
\]
Then $\sL$ is a Baaj-Vaes subalgebra of $\Linf(\hh{\GG})$.
\end{proposition}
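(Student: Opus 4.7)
The plan is to treat the statement as a strengthening of \cite[Lemma 1.2]{open}: from that lemma I would take for granted that $\sL$ is a $\sigma$-weakly closed unital $*$-subalgebra of $\Linf(\hh\GG)$ satisfying $\Delta_{\hh\GG}(\sL)\subset\sL\vtens\sL$. What then remains, in order for $\sL$ to qualify as a Baaj-Vaes subalgebra, is invariance of $\sL$ under the unitary antipode $\hh R$ and the scaling group $(\hh\tau_t)_{t\in\RR}$ of $\hh\GG$.

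For these two properties I would appeal to the standard intertwining identities satisfied by any bicharacter from $\HH$ to $\GG$, as developed in \cite{MRW}: namely $(\hh R\tens R_\HH)V=V$ and $(\hh\tau_t\tens\tau^\HH_t)V=V$ for all $t\in\RR$. Because $R_\HH$ is an involutive normal $*$-anti-automorphism and each $\tau^\HH_t$ is a normal automorphism of $\Linf(\HH)$, these identities can be rewritten as $(\hh R\tens\id)V=(\id\tens R_\HH)V$ and $(\hh\tau_t\tens\id)V=(\id\tens\tau^\HH_{-t})V$. Slicing on the second leg against an arbitrary $\zeta\in\Lone(\HH)$ then yields
\[
\hh R\bigl((\id\tens\zeta)V\bigr)=(\id\tens\zeta\comp R_\HH)V,\qquad\hh\tau_t\bigl((\id\tens\zeta)V\bigr)=(\id\tens\zeta\comp\tau^\HH_{-t})V,
\]
and since $\zeta\comp R_\HH$ and $\zeta\comp\tau^\HH_{-t}$ belong to $\Lone(\HH)$, both right-hand sides lie in the generating set for $\sL$. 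Normality of $\hh R$ and of each $\hh\tau_t$ then propagates these inclusions to the $\sigma$-weak closure $\sL$; involutivity of $\hh R$ and the group law for $\hh\tau$ upgrade them to the required equalities $\hh R(\sL)=\sL$ and $\hh\tau_t(\sL)=\sL$.

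Combined with the invariant-subalgebra content of \cite[Lemma 1.2]{open}, this verifies every clause in the definition of a Baaj-Vaes subalgebra, after which the Baaj-Vaes theorem \cite[Proposition 10.5]{BV} identifies $\sL$ with $\Linf(\KK)$ for a locally compact quantum group $\KK$. The main obstacle I anticipate is locating the two intertwining identities in the required generality, i.e.~for an arbitrary bicharacter rather than just for the Kac-Takesaki operator; for the reduced bicharacter $V=\bigl(\id\tens(\Lambda_\HH\comp\pi)\bigr)\wW^\GG$ produced by a Hopf $*$-homomorphism $\pi\in\Mor(\C_0^\uu(\GG),\C_0^\uu(\HH))$ this will follow from the well-known corresponding identities for $\wW^\GG$ together with the fact that Hopf $*$-homomorphisms intertwine the universal scaling groups and unitary antipodes of $\GG$ and $\HH$, a fact documented in \cite{MRW}.
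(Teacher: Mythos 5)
Your proposal is correct and in substance coincides with the paper's proof: the paper likewise obtains antipode and scaling-group invariance from the identities $(R^{\hh{\GG}}\tens R^{\HH})V=V$ and $(\tau_t^{\hh{\GG}}\tens\tau_t^{\HH})V=V$, quoted for arbitrary bicharacters from \cite[Proposition 3.10]{MRW} (cf.~\cite[Remark 41]{mu2}), and then slices against $\zeta\in\Lone(\HH)$ exactly as you do; so the obstacle you anticipate at the end is settled by a direct citation rather than by lifting to the universal level. The only organizational difference is that the paper does not invoke \cite[Lemma 1.2]{open} but proves the weaker statement on the spot: $\sL$ is a von Neumann algebra by \cite[Theorem 1.6(1)]{mu}, and $\Delta_{\hh{\GG}}(\sL)\subset\sL\vtens\sL$ follows by slicing the bicharacter relation $(\Delta_{\hh{\GG}}\tens\id)V=V_{23}V_{13}$.

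One step of yours does need repair, however. The intermediate rewriting $(\hh{R}\tens\id)V=(\id\tens R_{\HH})V$ is not meaningful as stated: a normal $*$-anti-automorphism need not be completely bounded (the transpose on $\B(\Ltwo(\HH))$ is the standard example), so neither $\id\tens R_{\HH}$ nor $\hh{R}\tens\id$ is in general a well-defined map on $\Linf(\hh{\GG})\vtens\Linf(\HH)$, and involutivity of $R_{\HH}$ does not cure this. What is well defined is the tensor product of the \emph{two} anti-automorphisms, $R^{\hh{\GG}}\tens R^{\HH}$, which is a normal $*$-anti-automorphism of $\Linf(\hh{\GG})\vtens\Linf(\HH)$. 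Accordingly, derive your sliced identity directly from the invariance of $V$, as the paper does:
\[
R^{\hh{\GG}}\bigl((\id\tens\zeta)V\bigr)
=\bigl(\id\tens[\zeta\comp R^{\HH}]\bigr)\bigl((R^{\hh{\GG}}\tens R^{\HH})V\bigr)
=\bigl(\id\tens[\zeta\comp R^{\HH}]\bigr)V\in\sL,
\]
where the first equality is an identity of normal maps on all of $\Linf(\hh{\GG})\vtens\Linf(\HH)$ (verify it on elementary tensors, using $R^{\HH}\comp R^{\HH}=\id$) and the second uses $(R^{\hh{\GG}}\tens R^{\HH})V=V$. Your scaling-group manipulation is unobjectionable as written, because $\id\tens\tau^{\HH}_{-t}$ is a genuine normal automorphism of the tensor product; with the antipode step repaired as above, the rest of your argument (normality, involutivity, the group law, and \cite[Proposition 10.5]{BV}) goes through unchanged.
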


\begin{proof}
First we note that $\sL$ is a von Neumann algebra. Indeed this follows directly from \cite[Theorem 1.6(1)]{mu}.  To check that $\sL$ is an invariant subalgebra it is enough to see that for any $\zeta\in\Lone(\HH)$ and any $\phi\in\Lone(\hh{\GG})$ we have
\[
(\id\tens\phi)\bigl(\Delta_{\hh{\GG}}\bigl((\id\tens\zeta)V\bigr)\bigr),
(\phi\tens\id)\bigl(\Delta_{\hh{\GG}}\bigl((\id\tens\zeta)V\bigr)\bigr)\in\sL.
\]
This we infer from the fact that
\[
(\id\tens\phi)\bigl(\Delta_{\hh{\GG}}\bigl((\id\tens\zeta)V\bigr)\bigr)=(\id\tens\phi\tens\zeta)(V_{23}V_{13})=
(\id\tens\widetilde{\phi})V,
\]
where $\widetilde{\phi}(y)=(\phi\tens\zeta)\bigl(V(\I\tens{y})\bigr)$ for all $y\in\Linf(\HH)$. Similarly
\[
(\phi\tens\id)\bigl(\Delta_{\hh{\GG}}\bigl((\id\tens\zeta)V\bigr)\bigr)=(\phi\tens\id\tens\zeta)(V_{23}V_{13})=
(\id\tens\breve{\phi})V,
\]
with $\breve{\phi}(y)=(\phi\tens\zeta)\bigl((\I\tens{y})V\bigr)$ for all $y\in\Linf(\HH)$.

What is left is to establish invariance of $\sL$ under the unitary antipode $R^\HH$ and the scaling group $\tau^\HH$ of $\HH$. This follows from the fact that for each $\zeta\in\Lone(\HH)$ and $t\in\RR$ we have
\[
\begin{aligned}
R^{\hh{\GG}}\bigl((\id\tens\zeta)V\bigr)
&=\bigl(\id\tens[\zeta\comp{R^{\HH}}]\bigr)(R^{\hh{\GG}}\tens{R^\HH})V=\bigl(\id\tens[\zeta\comp{R^{\HH}}]\bigr)V\in\sL
\\
\tau_t^{\hh{\GG}}\bigl((\id\tens\zeta)V\bigr)
&=\bigl(\id\tens[\zeta\comp{\tau_{-t}^{\HH}}]\bigr)(\tau_t^{\hh{\GG}}\tens{\tau_t^\HH})V=
\bigl(\id\tens[\zeta\comp{\tau_{-t}^{\HH}}]\bigr)V\in\sL
\end{aligned}
\]
by \cite[Proposition 3.10]{MRW} (cf.~\cite[Remark 41]{mu2}).
\end{proof}

As mentioned in Section \ref{qsub}, a Baaj-Vaes subalgebra $\sL$ of $\Linf(\hh{\GG})$ defines a quantum subgroup $\KK$ of $\GG$ by setting $\hh{\KK}$ to be the quantum group associated with $\sL$. Thus, using Proposition \ref{BV2} we can introduce the following definition:

\begin{definition}\label{DefImPi}
Let $\GG$ and $\HH$ be locally compact quantum groups and let $\Pi\colon\HH\to\GG$ be a homomorphism of quantum groups. We define the closed quantum subgroup $\IM{\Pi}$ of $\GG$ as the quantum subgroup related to the Baaj-Vaes subalgebra $\sL$ of $\Linf(\hh{\GG})$ described in Proposition \ref{BV2}. In particular 
\[
\Linf\bigl(\hh{\IM{\Pi}}\:\!\bigr)={\bigl\{(\id\tens\zeta)V\st\zeta\in\Lone(\HH)\bigr\}}^{\sigma-\text{\rm{weak closure}}\hspace{-\sw}}.
\]
\end{definition}

\subsection{Quotient by kernel of $\Pi$}

In a manner completely analogous to the one used in the proof of Proposition \ref{BV2} we obtain also the following result:

\begin{proposition}\label{BV1}
Let $\GG$ and $\HH$ be locally compact quantum groups and let $\Pi\colon\HH\to\GG$ be a homomorphism of quantum groups with corresponding bicharacter $V\in\Linf(\hh{\GG})\vtens\Linf(\HH)$. Let
\[
\sR={\bigl\{(\phi\tens\id)V\st\phi}\in\Lone(\hh{\GG})\bigr\}^{\sigma-\text{\rm{weak closure}}\hspace{-\sw}}.
\]
Then $\sR$ is a Baaj-Vaes subalgebra of $\Linf(\HH)$.
\end{proposition}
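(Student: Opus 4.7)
The plan is to mirror the proof of Proposition \ref{BV2} with the roles of the two legs of the bicharacter $V$ interchanged. Three things need to be verified: that $\sR$ is a von Neumann subalgebra of $\Linf(\HH)$, that $\Delta_\HH(\sR) \subset \sR \vtens \sR$, and that $R^\HH(\sR) = \sR$ together with $\tau_t^\HH(\sR) = \sR$ for every $t \in \RR$.

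First I would argue that $\sR$ is a von Neumann algebra: this follows from \cite[Theorem 1.6(1)]{mu} applied to $V$, exactly as in the proof of Proposition \ref{BV2}. For the invariance of $\sR$ under $\Delta_\HH$, I would fix $\phi \in \Lone(\hh{\GG})$ and $\omega \in \Lone(\HH)$ and use the bicharacter identity $(\id \tens \Delta_\HH)V = V_{12}V_{13}$ to compute
\[
(\id \tens \omega)\Delta_\HH\bigl((\phi \tens \id)V\bigr) = (\phi \tens \id \tens \omega)(V_{12}V_{13}) = (\phi' \tens \id)V,
\]
where $\phi' \in \Lone(\hh{\GG})$ is defined by $\phi'(y) = \phi\bigl(y\,(\id \tens \omega)V\bigr)$, and similarly
\[
(\omega \tens \id)\Delta_\HH\bigl((\phi \tens \id)V\bigr) = (\phi \tens \omega \tens \id)(V_{12}V_{13}) = (\rho \tens \id)V,
\]
where $\rho(y) = (\phi \tens \omega)\bigl(V(y \tens \I)\bigr)$. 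Both $\phi'$ and $\rho$ lie in $\Lone(\hh{\GG})$, so the slices land in $\sR$. Since this holds for a separating family of functionals $\omega$ and is compatible with $\sigma$-weak limits in $\phi$, it upgrades to $\Delta_\HH(\sR) \subset \sR \vtens \sR$.

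For invariance under $R^\HH$ and $\tau_t^\HH$, I would invoke \cite[Proposition 3.10]{MRW} (cf.~\cite[Remark 41]{mu2}) which gives $(R^{\hh{\GG}} \tens R^\HH)V = V$ and $(\tau_t^{\hh{\GG}} \tens \tau_t^\HH)V = V$. Consequently for $\phi \in \Lone(\hh{\GG})$ and $t \in \RR$,
\[
\begin{aligned}
R^\HH\bigl((\phi \tens \id)V\bigr) &= (\phi \tens R^\HH)V = \bigl([\phi \comp R^{\hh{\GG}}] \tens \id\bigr)V \in \sR,\\
\tau_t^\HH\bigl((\phi \tens \id)V\bigr) &= (\phi \tens \tau_t^\HH)V = \bigl([\phi \comp \tau_{-t}^{\hh{\GG}}] \tens \id\bigr)V \in \sR,
\end{aligned}
\]
and by $\sigma$-weak continuity together with the involutivity of $R^\HH$ and the flow property of $\tau^\HH$, these inclusions become equalities.

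The main obstacle here is essentially bookkeeping: one has to keep careful track of which leg of $V_{12}V_{13}$ is being sliced by which functional and then recognize the resulting expression as a left slice of $V$ by an element of $\Lone(\hh{\GG})$. Once the two slicing identities above are set up correctly, the argument follows the same template as Proposition \ref{BV2} and requires no further analytic input.
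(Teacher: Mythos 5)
Your proposal is correct and is essentially the paper's own proof: the paper proves Proposition \ref{BV1} simply by declaring it ``completely analogous'' to Proposition \ref{BV2}, and your argument is exactly that analogue spelled out — von Neumann algebra property via \cite[Theorem 1.6(1)]{mu}, invariance under $\Delta_\HH$ via the bicharacter identity $(\id\tens\Delta_\HH)V=V_{12}V_{13}$ with both legs sliced, and invariance under $R^\HH$ and $\tau^\HH_t$ via \cite[Proposition 3.10]{MRW}. The slicing identities and the induced functionals $\phi'$, $\rho$ are set up correctly, so nothing is missing.
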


\begin{definition}\label{DefKerPi}
Let $\GG$ and $\HH$ be locally compact quantum groups. Given a homomorphism $\Pi\colon\HH\to\GG$ described by a bicharacter $V\in\Linf(\hh{\GG})\vtens\Linf(\HH)$ we define a locally compact quantum group $\HH/\!\ker\Pi$ by setting
\[
\Linf(\HH/\!\ker\Pi)={\bigl\{(\phi\tens\id)V\st\phi}\in\Lone(\hh{\GG})\bigr\}^{\sigma-\text{\rm{weak closure}}\hspace{-\sw}}
\]
and $\Delta_{\HH/\!\ker\Pi}=\bigl.\Delta_\HH\bigr|_{\Linf(\HH/\!\ker\Pi)}$.
\end{definition}

Given $\GG$, $\HH$ and $\Pi$ as in Definition \ref{DefKerPi} it may happen that the subalgebra $\Linf(\HH/\!\ker\Pi)$ is all of $\Linf(\HH)$. In this case we say that $\HH/\!\ker\Pi=\HH$. Using this language we can rewrite implication (1) $\Rightarrow$ (3) of \cite[Theorem 3.4]{DKSS} in the following way:

\begin{corollary}
Let $\GG$ and $\HH$ be locally compact quantum groups and let $\Pi\colon\HH\to\GG$ be a homomorphism of quantum groups. If $\Pi$ identifies $\HH$ with a closed subgroup of $\GG$ then $\HH/\!\ker\Pi=\HH$.
\end{corollary}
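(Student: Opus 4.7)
The plan is to unwind the definitions once $\Pi$ is realized as a closed quantum subgroup, and reduce the statement $\Linf(\HH/\!\ker\Pi)=\Linf(\HH)$ to the fact that the pre-adjoint of an injective normal map between von Neumann algebras has norm-dense range.

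By the definition recalled in Section \ref{qsub}, since $\Pi$ identifies $\HH$ with a closed quantum subgroup of $\GG$, the bicharacter takes the form $V=(\gamma\tens\id)\ww^\HH$ for some injective normal $*$-homomorphism $\gamma\colon\Linf(\hh{\HH})\to\Linf(\hh{\GG})$. For any $\phi\in\Lone(\hh{\GG})$ a direct computation of the left slice gives
\[
(\phi\tens\id)V=(\phi\tens\id)\bigl((\gamma\tens\id)\ww^\HH\bigr)=\bigl((\phi\comp\gamma)\tens\id\bigr)\ww^\HH.
\]
Hence the generating set of $\Linf(\HH/\!\ker\Pi)$ appearing in Definition \ref{DefKerPi} equals $\bigl\{(\omega\tens\id)\ww^\HH\st\omega\in\gamma_*(\Lone(\hh{\GG}))\bigr\}$, where $\gamma_*\colon\Lone(\hh{\GG})\to\Lone(\hh{\HH})$ denotes the preadjoint $\phi\mapsto\phi\comp\gamma$.

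The key step is to verify that $\gamma_*(\Lone(\hh{\GG}))$ is norm-dense in $\Lone(\hh{\HH})$. This is a standard Hahn-Banach argument: if $m\in\Linf(\hh{\HH})$ annihilates the closure of $\gamma_*(\Lone(\hh{\GG}))$ in $\Lone(\hh{\HH})$, then $\phi(\gamma(m))=\gamma_*(\phi)(m)=0$ for every $\phi\in\Lone(\hh{\GG})$, which forces $\gamma(m)=0$ because $\Lone(\hh{\GG})$ separates points of $\Linf(\hh{\GG})$; injectivity of $\gamma$ then gives $m=0$. By the Hahn-Banach theorem applied to the Banach space $\Lone(\hh{\HH})$ this shows that $\gamma_*(\Lone(\hh{\GG}))$ is norm-dense in $\Lone(\hh{\HH})$.

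To finish, I would combine this density with the norm-continuity of the slice map $\Lone(\hh{\HH})\ni\omega\mapsto(\omega\tens\id)\ww^\HH\in\Linf(\HH)$. Norm-continuity turns the norm-dense image $\gamma_*(\Lone(\hh{\GG}))\subset\Lone(\hh{\HH})$ into a norm-dense, hence $\sigma$-weakly dense, subset of $\bigl\{(\omega\tens\id)\ww^\HH\st\omega\in\Lone(\hh{\HH})\bigr\}$, whose $\sigma$-weak closure is $\Linf(\HH)$ by the characterization recalled in Section \ref{lcqgs}. Consequently $\Linf(\HH/\!\ker\Pi)=\Linf(\HH)$, i.e.~$\HH/\!\ker\Pi=\HH$. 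No real obstacle is expected; the only slightly subtle point is the norm-density of $\gamma_*(\Lone(\hh{\GG}))$, but this is precisely a one-line application of Hahn-Banach combined with the injectivity of $\gamma$ built into the Vaes definition of a closed quantum subgroup.
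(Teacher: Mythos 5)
Your proof is correct, and it is worth noting that the paper itself contains no argument for this corollary: the corollary is obtained there purely by citation, as a restatement (in the language of Definition \ref{DefKerPi}) of implication (1) $\Rightarrow$ (3) of \cite[Theorem 3.4]{DKSS}. What you have written is a correct, self-contained proof of that implication, and it is essentially the standard argument behind the cited result: writing $V=(\gamma\tens\id)\ww^\HH$, the generating set $\bigl\{(\phi\tens\id)V\st\phi\in\Lone(\hh{\GG})\bigr\}$ of $\Linf(\HH/\!\ker\Pi)$ becomes $\bigl\{(\omega\tens\id)\ww^\HH\st\omega\in\gamma_*(\Lone(\hh{\GG}))\bigr\}$; the range of $\gamma_*$ is norm-dense in $\Lone(\hh{\HH})$ because norm-density of the range of a bounded operator between Banach spaces is dual to injectivity of its adjoint (here $\gamma$, which is injective by the Vaes definition of a closed quantum subgroup and is the adjoint of $\gamma_*$ by normality) --- this is exactly your Hahn--Banach step; and norm-continuity of $\omega\mapsto(\omega\tens\id)\ww^\HH$ then identifies the $\sigma$-weak closures. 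The only step you use tacitly is that slicing the first leg of $\ww^\HH$ by elements of $\Lone(\hh{\HH})$ yields the same set as slicing by $\B(\Ltwo(\HH))_*$, since every normal functional on the von Neumann subalgebra $\Linf(\hh{\HH})\subset\B(\Ltwo(\HH))$ extends to a normal functional on $\B(\Ltwo(\HH))$; this is standard and harmless, and with it the characterization of $\Linf(\HH)$ recalled in Section \ref{lcqgs} applies verbatim, giving $\Linf(\HH/\!\ker\Pi)=\Linf(\HH)$, i.e.\ $\HH/\!\ker\Pi=\HH$.
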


\begin{remark}\label{remImKer}
If $G$ and $H$ are locally compact groups and $\Pi\colon{H}\to{G}$ is a continuous homomorphism then it is not difficult to see that Definitions \ref{DefImPi} and \ref{DefKerPi} yield quantum groups corresponding to the closure of the image of $\Pi$ and kernel of $\Pi$ respectively.
\end{remark}

\subsection{Quotient by kernel of a quantum group action}

Let $\GG$ and $\HH$ be locally compact quantum groups. A homomorphism of quantum groups $\Pi\colon\HH\to\GG$ defines the quantum group $\HH/\!\ker\Pi$ as in Definition \ref{DefKerPi}. The next theorem shows that this quantum group can be defined directly from the action $\alpha\colon\Linf(\GG)\to\Linf(\GG)\vtens\Linf(\HH)$ associated to $\Pi$.

\begin{theorem}\label{ThmPi}
Let $\HH$ and $\GG$ be locally compact quantum groups and let $\Pi\colon\HH\to\GG$ be a homomorphism of quantum groups with corresponding bicharacter $V\in\Linf(\hh{\GG})\vtens\Linf(\HH)$ and right quantum group homomorphism $\alpha\colon\Linf(\GG)\to\Linf(\GG)\vtens\Linf(\HH)$. Then
\begin{equation}\label{MN}
{\bigl\{(\omega\tens\id)\alpha(x)\st{x}}\in\Linf(\GG),\:\omega\in\Lone(\GG)\bigr\}''=\Linf(\HH/\!\ker\Pi).
\end{equation}
\end{theorem}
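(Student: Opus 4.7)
My plan is to prove the equality by establishing both inclusions, using the implementation formula $\alpha(x)=V(x\tens\I)V^*$ from \eqref{Vimpl} throughout.

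For the inclusion $\subseteq$, the slice criterion (Tomiyama) combined with the very definition of $\Linf(\HH/\!\ker\Pi)$ as the $\sigma$-weak closure of $\{(\phi\tens\id)V:\phi\in\Lone(\hh{\GG})\}$ places $V$ inside the smaller tensor product $\Linf(\hh{\GG})\vtens\Linf(\HH/\!\ker\Pi)$. Consequently $V(x\tens\I)V^*\in\B(\Ltwo(\GG))\vtens\Linf(\HH/\!\ker\Pi)$. Intersecting with $\Linf(\GG)\vtens\Linf(\HH)$---which already contains $\alpha(x)$ by the right-action axiom---and applying the standard intersection identity $(A_1\vtens B_1)\cap(A_2\vtens B_2)=(A_1\cap A_2)\vtens(B_1\cap B_2)$ for von Neumann tensor products yields $\alpha(x)\in\Linf(\GG)\vtens\Linf(\HH/\!\ker\Pi)$. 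Slicing by $\omega\in\Lone(\GG)$ then gives $(\omega\tens\id)\alpha(x)\in\Linf(\HH/\!\ker\Pi)$.

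For the reverse inclusion I would take commutants in $\B(\Ltwo(\HH))$. Let $T\in\B(\Ltwo(\HH))$ commute with every $(\omega\tens\id)\alpha(x)$; by slicing, this is equivalent to $\I\tens T$ commuting with $\alpha(x)=V(x\tens\I)V^*$ for all $x\in\Linf(\GG)$. The element $Z:=V^*(\I\tens T)V$ therefore commutes with $\Linf(\GG)\tens\CC\I$, placing it in $\Linf(\GG)'\vtens\B(\Ltwo(\HH))$; and from $V\in\Linf(\hh{\GG})\vtens\Linf(\HH)$ one also has $Z\in\Linf(\hh{\GG})\vtens\B(\Ltwo(\HH))$. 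Intersecting and invoking the standard triviality $\Linf(\hh{\GG})\cap\Linf(\GG)'=\CC\I$ forces $Z=\I\tens T'$ for some $T'\in\B(\Ltwo(\HH))$. Rewriting as $(\I\tens T)V=V(\I\tens T')$ and slicing the first leg gives $Ty=yT'$ for every $y=(\phi\tens\id)V$, hence for all $y\in\Linf(\HH/\!\ker\Pi)$ by $\sigma$-weak continuity of left and right multiplication. Because Proposition~\ref{BV1} identifies $\Linf(\HH/\!\ker\Pi)$ as a Baaj-Vaes subalgebra of $\Linf(\HH)$ it contains $\I$; setting $y=\I$ yields $T=T'$, so $T$ commutes with all of $\Linf(\HH/\!\ker\Pi)$. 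Thus the commutant of the left-hand side sits inside the commutant of $\Linf(\HH/\!\ker\Pi)$, which gives the reverse inclusion and hence equality.

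I expect the main obstacle to be the clean invocation of the triviality $\Linf(\hh{\GG})\cap\Linf(\GG)'=\CC\I$; although this is a standard consequence of the Tomita-Takesaki structure of a locally compact quantum group on $\Ltwo(\GG)$, one must cite the appropriate references (e.g.\ \cite{KVvN,BS}) and verify that the two von Neumann subalgebras of $\B(\Ltwo(\GG))$ meet trivially. The remaining steps---the slice criterion, the intersection identity for von Neumann tensor products, and $\sigma$-weak continuity of the maps $y\mapsto Ty$ and $y\mapsto yT'$---are entirely routine.
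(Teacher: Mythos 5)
Your proof is correct, and it is packaged differently from the paper's. The paper first verifies that $\sM$ (the left-hand side of \eqref{MN}) is a left coideal in $\Linf(\HH)$, then computes its co-dual $\dd{\sM}=\sM'\cap\Linf(\hh{\HH})$, shows $\dd{\sM}=\dd{\sR}$ for $\sR=\Linf(\HH/\!\ker\Pi)$, and concludes $\sM=\sR$ from the bi-duality theorem $\dd{\dd{\sL}}=\sL$ for left coideals (\cite[Theorem 3.9]{eqhs}). You instead prove the inclusion $\sM\subseteq\sR$ directly by a slice-map argument, and then run the commutant computation for an \emph{arbitrary} $T\in\B(\Ltwo(\HH))$ rather than only for $y\in\Linf(\hh{\HH})$, finishing with von Neumann's bicommutant theorem. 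The heart of the two arguments is literally the same: conjugate $\I\tens T$ by $V$, observe that the result lies in $\bigl(\Linf(\GG)'\vtens\B(\Ltwo(\HH))\bigr)\cap\bigl(\Linf(\hh{\GG})\vtens\B(\Ltwo(\HH))\bigr)$, invoke $\Linf(\hh{\GG})\cap\Linf(\GG)'=\CC\I$, slice the first leg, and use $\I\in\sR$ to identify $T=T'$. What your packaging buys is self-containedness: you avoid the coideal formalism and the co-duality theorem of \cite{eqhs} entirely, replacing them by the elementary forward inclusion and the bicommutant theorem; the price is that the forward inclusion must be proved separately, whereas the paper gets both inclusions at once from bi-duality (and, along the way, records the coideal property of $\sM$ and the identification of its co-dual, which fit the framework used elsewhere in the paper). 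Two small points on attribution: the triviality you worried about is exactly \cite[Proposition 4.7(3)]{mnw} applied to $\hh{\GG}$, which is what the paper cites; and the slice criterion and intersection identity you invoke are, for von Neumann algebras, consequences of Tomita's commutation theorem --- one only needs that $M\vtens\B(K)$ is precisely the set of operators all of whose slices in the appropriate leg lie in $M$, together with $(M\vtens\B(K))\cap(\B(H)\vtens N)=M\vtens N$ --- so the commutation theorem is the right reference here rather than Tomiyama, whose slice-map theory concerns \cst-tensor products, where such statements can fail.
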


\begin{proof}
Let us denote by $\sM$ the left hand side of \eqref{MN} and let $\sR$ be a shorthand for $\Linf(\HH/\!\ker\Pi)$. Clearly $\sM$ is a von Neumann algebra. The next thing to see is that $\sM\subset\Linf(\HH)$ is a left coideal. To check this take $x\in\Linf(\GG)$, $\omega\in\Lone(\GG)$ and $\zeta\in\Lone(\HH)$. We have
\[
(\zeta\tens\id)\Delta_\HH\bigl((\omega\tens\id)\alpha(x)\bigr)=\bigl(\bigl[(\omega\tens\zeta)\comp\alpha\bigr]\tens\id\bigr)\alpha(x)\in\sM,
\]
so $\Delta_\HH\bigl(\sM\bigr)\subset\Linf(\HH)\vtens\sM$. 

Once we know that $\sM$ is a left coideal in $\Linf(\GG)$, we can consider its co-dual $\dd{\sM}$ (cf.~the end of Section \ref{lcqgs} and \cite[Section 3]{eqhs}).

The co-dual of $\sM$ consists of those $y\in\Linf(\hh{\HH})$ which satisfy $uy=yu$ for all $u\in\sM$. Thus $y\in\dd{\sM}$ if and only if
\[
V(x\tens\I)V^*(\I\tens{y})=(\I\tens{y})V(x\tens\I)V^*,\qquad{x}\in\Linf(\GG)
\]
or, equivalently,
\[
V^*(\I\tens{y})V(x\tens\I)=(x\tens\I)V^*(\I\tens{y})V,\qquad{x}\in\Linf(\GG).
\]
This means that $V^*(\I\tens{y})V\in\Linf(\GG)'\vtens\B(\Ltwo(\HH))$, but at the same time $V^*(\I\tens{y})V\in\Linf(\hh{\GG})\vtens\B(\Ltwo(\HH))$. Since $\Linf(\hh{\GG})\cap\Linf(\GG)'=\CC\I$ (this is \cite[Proposition 4.7(3)]{mnw} applied to $\hh{\GG}$), we find that $V^*(\I\tens{y})V=\I\tens{z}$ for some $z\in\B(\Ltwo(\HH))$. This is equivalent to
\[
(\I\tens{y})V=V(\I\tens{z})
\]
and slicing the left leg of this equality yields
\[
yu=uz,\qquad{u}\in\sR.
\]
Since $\I\in\sR$, we conclude that $y=z$, so for $y$ to belong to $\dd{\sM}$ is equivalent to $y\in\Linf(\hh{\HH})\cap\sR'=\dd{\sR}$. In other words $\dd{\sM}=\dd{\sR}$ and so $\sM=\dd{\dd{\sM}}=\dd{\dd{\sR}}=\sR$.
\end{proof}

In analogy with the way $\Linf(\HH/\!\ker\Pi)$ is defined in terms of the action of $\HH$ on $\GG$ given by the homomorphism $\Pi$ in Theorem \ref{ThmPi} we can define a similar object for an arbitrary action $\alpha$ of $\HH$ on a von Neumann algebra $\sM$.

\begin{proposition}
Let $\sM$ be a von Neumann algebra and let $\HH$ be a locally compact quantum group. Let $\alpha\colon\sM\to\sM\vtens\Linf(\HH)$ be an action of $\HH$ on $\sM$. Let $\sN_0={\bigl\{(\omega\tens\id)\alpha(x)\st}\omega\in\sM_*,\:x\in\sM\bigr\}''$. Then $\sN_0$ is an invariant subalgebra of $\Linf(\HH)$.
\end{proposition}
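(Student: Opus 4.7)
The plan is to verify two things: first that $\sN_0$ is actually a von Neumann subalgebra of $\Linf(\HH)$, and then that $\Delta_\HH(\sN_0) \subset \sN_0 \vtens \sN_0$. The first point is essentially automatic: the generating set $\bigl\{(\omega\tens\id)\alpha(x)\st\omega\in\sM_*,\,x\in\sM\bigr\}$ is contained in $\Linf(\HH)$ (since $\alpha$ lands in $\sM\vtens\Linf(\HH)$) and is self-adjoint (slicing by $\omega$ and by $\bar\omega\comp{*}$ are related by the $*$-operation), so taking its double commutant inside $\Linf(\HH)$ yields a von Neumann subalgebra of $\Linf(\HH)$.

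For the invariance property, the entire computation reduces to the coaction identity $(\id\tens\Delta_\HH)\comp\alpha=(\alpha\tens\id)\comp\alpha$ applied to the generators. Namely, for $y=(\omega\tens\id)\alpha(x)$ I would write
\[
\Delta_\HH(y)=(\omega\tens\id\tens\id)(\id\tens\Delta_\HH)\alpha(x)=(\omega\tens\id\tens\id)(\alpha\tens\id)\alpha(x)
\]
and then slice this expression on each of the two remaining legs by an arbitrary $\zeta\in\Lone(\HH)$. Slicing the second (right) leg produces $(\omega\tens\id)\alpha(x_\zeta)$ with $x_\zeta=(\id\tens\zeta)\alpha(x)\in\sM$, which lies in $\sN_0$ by definition. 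Slicing the first (left) leg produces $(\psi\tens\id)\alpha(x)$ where $\psi=(\omega\tens\zeta)\comp\alpha\in\sM_*$, which is again a generator of $\sN_0$. Since every left slice and every right slice of $\Delta_\HH(y)$ lies in $\sN_0$, the standard slice-map characterization of the von Neumann tensor product (i.e.\ $(\sN_0\vtens\B(\Ltwo(\HH)))\cap(\B(\Ltwo(\HH))\vtens\sN_0)=\sN_0\vtens\sN_0$) gives $\Delta_\HH(y)\in\sN_0\vtens\sN_0$.

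Finally, I would pass from the generators to all of $\sN_0$ by normality: $\Delta_\HH$ is $\sigma$-weakly continuous and $\sN_0\vtens\sN_0$ is $\sigma$-weakly closed, and $\sN_0$ is the $\sigma$-weak closure of the $*$-algebra generated by the elements $(\omega\tens\id)\alpha(x)$, so the inclusion $\Delta_\HH(\sN_0)\subset\sN_0\vtens\sN_0$ follows. The only mildly subtle step is the last one, the slice-map/intersection fact used to combine the two one-sided slice conditions into membership in $\sN_0\vtens\sN_0$; this is a well-known property of the von Neumann tensor product but deserves a brief citation or acknowledgement rather than a proof from scratch.
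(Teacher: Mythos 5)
Your proposal is correct and takes essentially the same approach as the paper: the paper proves this proposition by referring to the first stage of the proof of Theorem \ref{ThmPi}, whose left-slice computation $(\zeta\tens\id)\Delta_\HH\bigl((\omega\tens\id)\alpha(x)\bigr)=\bigl(\bigl[(\omega\tens\zeta)\comp\alpha\bigr]\tens\id\bigr)\alpha(x)$ is exactly yours, and the two-sided slicing combined with the intersection fact $\bigl(\sN_0\vtens\B(\Ltwo(\HH))\bigr)\cap\bigl(\B(\Ltwo(\HH))\vtens\sN_0\bigr)=\sN_0\vtens\sN_0$ is the same device used in Proposition \ref{BV2}. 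You simply make explicit the right-slice computation and the normality/density passage from generators to all of $\sN_0$, details which the paper leaves to the stated analogy.
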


\begin{proof}
The proof is completely analogous to the first stage of the proof of Theorem \ref{ThmPi}.
\end{proof}

\begin{definition}\label{DefKerAlpha}
Let $\sM$ be a von Neumann algebra and let $\HH$ be a locally compact quantum group. Let $\alpha\colon\sM\to\sM\vtens\Linf(\HH)$ be an action of $\HH$ on $\sM$. Let $\sN$ be the smallest Baaj-Vaes subalgebra of $\Linf(\HH)$ containing the set
\[
\bigl\{(\omega\tens\id)\alpha(x)\st\omega\in\sM_*,\:x\in\sM\bigr\}.
\]
We define the quantum group $\HH/\!\ker\alpha$ as the locally compact quantum group such that
\[
\Linf(\HH/\!\ker\alpha)=\sN
\]
and $\Delta_{\HH/\!\ker\alpha}=\bigl.\Delta_\HH\bigr|_{\Linf(\HH/\!\ker\alpha)}$.
\end{definition}

\begin{remark}\label{remPiAlpha}
\noindent
\begin{enumerate}
\item 
Theorem \ref{ThmPi} makes it clear that if $\HH$ and $\GG$ are locally compact quantum groups and $\Pi\colon\HH\to\GG$ is a homomorphism of quantum groups with associated right quantum group morphism (action) $\alpha\colon\Linf(\GG)\to\Linf(\GG)\vtens\Linf(\HH)$ then $\HH/\!\ker\alpha$ coincides with $\HH/\!\ker\Pi$.
\item\label{remPiAlpha2} The quantum group of inner automorphisms of a locally compact quantum group defined in \cite[Definition 3.3]{centers} is a special case of the group of the form $\HH/\!\ker\alpha$ for a certain action $\alpha$.
\item If $H$ is a locally compact group acting on a von Neumann algebra $\sN$ with corresponding ``quantum action'' $\alpha\colon\sN\to\sN\vtens\Linf(H)$ then the quantum group $H/\!\ker\alpha$ defined in Definition \ref{DefKerAlpha} corresponds to the quotient of $H$ by the subgroup of all elements which act trivially on $\sN$ (cf.~Remark \ref{remImKer}).
\end{enumerate}
\end{remark}

The next result shows that if an action $\alpha\colon\sN\to\sN\vtens\Linf(\HH)$ of a locally compact quantum group $\HH$ on a von Neumann algebra $\sN$ is integrable, the kernel of $\alpha$ is in some sense small, or rather compact (cf.~\cite[Theorem 7.2]{open}).

\begin{proposition}\label{PropNH}
Let $\sN$ be a von Neumann algebra and let $\HH$ be a locally compact quantum group. Let $\alpha\colon\sN\to\sN\vtens\Linf(\HH)$ be an action of $\HH$ on $\sN$. Assume that $\alpha$ is integrable. Then $\hh{\HH/\!\ker\alpha}$ is an open quantum subgroup of $\hh{\HH}$.
\end{proposition}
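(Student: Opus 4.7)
The plan is to apply Corollary \ref{openInt} with the roles of $\GG$ and $\HH$ (as they appear in that corollary) played respectively by $\hh\HH$ and $\hh{\HH/\!\ker\alpha}$. Since $\Linf(\HH/\!\ker\alpha)$ is by construction a Baaj-Vaes subalgebra of $\Linf(\HH)=\Linf(\hh{\hh\HH})$, the bijection recalled in Section \ref{qsub} exhibits $\hh{\HH/\!\ker\alpha}$ as a closed quantum subgroup of $\hh\HH$ whose associated injection $\gamma$ is the inclusion $\Linf(\HH/\!\ker\alpha)\hookrightarrow\Linf(\HH)$. Corollary \ref{openInt} therefore reduces the problem to showing that the left action $\beta\colon\Linf(\HH/\!\ker\alpha)\to\Linf(\HH)\vtens\Linf(\HH/\!\ker\alpha)$ obtained by restricting $\Delta_\HH$ is integrable.

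I would next observe that $\beta$ is well defined by Baaj-Vaes invariance and is ergodic, since the full left action of $\HH$ on $\Linf(\HH)$ via $\Delta_\HH$ is ergodic and this property passes to invariant subalgebras. By the left analog of Proposition \ref{PropIntegr} it then suffices to exhibit a single non-zero $y\in\Linf(\HH/\!\ker\alpha)_+$ integrable for $\beta$, that is, a $y$ with $(\bh\tens\id)\Delta_\HH(y)\in\Linf(\HH/\!\ker\alpha)_+$. Right invariance $(\bh\tens\id)\Delta_\HH(y)=\bh(y)\I$ means this amounts to $\bh(y)<\infty$.

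From the hypothesis I would produce a candidate as follows. Pick a non-zero integrable $x\in\sN_+$ for $\alpha$, so that $(\id\tens\bhL)\alpha(x)\in\sN_+$. Since $\alpha(x)\neq0$, there is a positive $\omega\in\sN_*$ with $y_0=(\omega\tens\id)\alpha(x)\neq0$; Definition \ref{DefKerAlpha} places $y_0$ in $\Linf(\HH/\!\ker\alpha)_+$, and positivity of $\omega$ gives
\[
\bhL(y_0)=\omega\bigl((\id\tens\bhL)\alpha(x)\bigr)<\infty.
\]
Thus $y_0$ has finite \emph{left} Haar weight, whereas the integrability criterion for $\beta$ requires finite \emph{right} Haar weight. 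This mismatch is the main subtlety of the argument.

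The bridge is the unitary antipode $R^\HH$: because $\Linf(\HH/\!\ker\alpha)$ is Baaj-Vaes it is $R^\HH$-stable, and $R^\HH$, being a $\ast$-preserving involutive anti-automorphism, is a positivity-preserving bijection. Setting $y=R^\HH(y_0)$ and using the standard relation $\bhL=\bh\comp R^\HH$, I obtain $y\in\Linf(\HH/\!\ker\alpha)_+$, $y\neq0$, with $\bh(y)=\bhL(y_0)<\infty$, as required. The left analog of Proposition \ref{PropIntegr} then yields integrability of $\beta$, and Corollary \ref{openInt} gives openness of $\hh{\HH/\!\ker\alpha}$ in $\hh\HH$.
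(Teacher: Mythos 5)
Your proof is correct and follows essentially the same route as the paper's: both arguments slice an integrable element $x$ with a positive normal functional $\omega$ to obtain a non-zero element of $\Linf(\HH/\!\ker\alpha)$ of finite left Haar weight, flip it with the unitary antipode (legitimate by Baaj-Vaes stability) to trade $\bhL$-finiteness for $\bh$-finiteness, and then invoke the open-subgroup characterization of Section \ref{SectIntegr}. The only organizational difference is that the paper applies Theorem \ref{openThm} directly to the square-integrable element $R^\HH\bigl(\bigl((\omega\tens\id)\alpha(x)\bigr)^{\frac{1}{2}}\bigr)$, whereas you route through Corollary \ref{openInt} and re-derive the ergodicity plus left-analog-of-Proposition-\ref{PropIntegr} step that is already contained in the proof of Theorem \ref{openThm}.
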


\begin{proof}
Since $\alpha$ is integrable, there exists $x\in\sN_+$ such that $(\id\tens\bhL)\alpha(x)\in\sN_+$ and there is a normal state $\omega$ on $\sN$ such that
$\omega\bigl((\id\tens\bhL)\alpha(x)\bigr)\neq{0}$. By definition of the extended positive part of $\sN$ we then have
\[
0\neq\bhL\bigl((\omega\tens\id)\alpha(x)\bigr)<+\infty.
\]
Thus the element $R\bigl(\bigl((\omega\tens\id)\alpha(x)\bigr)^{\frac{1}{2}}\bigr)$ of $\Linf(\HH/\!\ker\alpha)$ is non-zero and square-integrable with respect to $\bh$, so by Theorem \ref{openThm}, $\hh{\HH/\!\ker\alpha}$ is an open subgroup of $\hh{\HH}$.
\end{proof}

\section{Canonical implementation of right quantum group homomorphism}\label{sectCan}

In this section we address the problem of determining the canonical implementation (\cite{impl} and Section \ref{subsAct}) of the action associated to a homomorphism of quantum groups. Let $\GG$ and $\HH$ be locally compact quantum groups and let $\Pi\colon\HH\to\GG$ be a homomorphism of quantum groups. Let  $\alpha\colon\Linf(\GG)\to\Linf(\GG)\vtens\Linf(\HH)$ be the corresponding action of $\HH$. In order to define the canonical implementation of $\alpha$ one must choose a n.s.f.~weight on $\Linf(\GG)$. We choose for this the right Haar weight $\bh$ of $\GG$.

A way to determine the canonical implementation is then provided by the proof of \cite[Proposition 4.3]{impl}. In this proposition S.~Vaes proves that in case the chosen weight is \emph{$\delta^{-1}$-invariant} (see \cite[D\'efinition 2.7]{Enock}, \cite[Definition 2.3]{impl}) the unitary implementation $U$ coincides with a unitary $V_\theta$ defined in \cite[Proposition 2.4]{impl} (cf.~\cite[Th\'eor\`eme 2.9]{Enock}). A careful examination of the proof of \cite[Proposition 4.3]{impl} shows that the claim is established on the basis of several properties of $V_\theta$ listed in \cite[Proposition 2.4]{impl}. In particular $\delta^{-1}$-invariance of the chosen weight does not play a crucial role. 

In our situation we choose an invariant weight, so \cite[Proposition 4.3]{impl} is not applicable directly. Nevertheless we already know that the action $\alpha\colon\Linf(\GG)\to\Linf(\GG)\vtens\Linf(\HH)$ is implemented by the corresponding bicharacter (see \eqref{Vimpl}). Moreover $V$ has the following properties:
\begin{itemize}
\item $\bigl(J^{\GG}\tens{J^{\hh{\HH}}}\bigr)V=V^*\bigl(J^{\GG}\tens{J^{\hh{\HH}}}\bigr)$,
\item $\bigl(\nabla^{\GG}\tens\nabla^{\hh{\HH}}\bigr)V=V\bigl(\nabla^{\GG}\tens\nabla^{\hh{\HH}}\bigr)$,
\item for any $\omega\in\Lone(\HH)$ and any $x\in\Dom(\eta)$ we have $\bigl((\id\tens\omega)V\bigr)\eta(x)=\eta\bigl((\id\tens\omega)\alpha(x)\bigr)$, where $\eta$ is the GNS map for the right Haar weight of $\GG$.
\end{itemize}
The first two properties are consequences of invariance of $V$ under $R^{\hh{\GG}}\tens{R^\HH}$ and $\tau^{\hh{\GG}}\tens\tau^\HH$ which we already used in the proof of Proposition \ref{BV2} (cf.~\cite[Proposition 2.1]{KVvN}). The last property follows from \eqref{dodatek} and Proposition \ref{Teza} from the Appendix. Indeed, using notation introduced at the beginning of Section \ref{cptSec}, for $\omega\in\Lone(\HH)$ we set $\mu=\omega\comp\Lambda_\HH\comp\pi$, where $\pi$ is the Hopf $*$-homomorphism corresponding to $\Pi$ and $\Lambda_\HH$ is the reducing morphism for $\HH$. Then by \eqref{dodatek}
\[
\mu\staru{x}=(\id\tens\mu)\bigl(\wW^\GG(x\tens\I){\wW^\GG}^*\bigr)
=(\id\tens\omega)\bigl(V(x\tens\I)V^*\bigr)
=(\id\tens\omega)\bigl(\alpha(x)\bigr)
\]
(because $\bigl(\id\tens(\Lambda_\HH\comp\pi)\bigr)\wW^\GG=V$) and
\[
\bigl((\id\tens\omega)V\bigr)\eta(x)=\bigl((\id\tens\mu)\wW^\GG\bigr)\eta(x)=\eta(\mu\staru{x})
\]
by Proposition \ref{Teza}. 

Repeating the steps of \cite[Proposition 4.3]{impl} with $\I$ instead of $\delta^{-1}$ we obtain the following:

\begin{theorem}\label{canonical}
Let $\GG$ and $\HH$ be locally compact quantum groups and let $\Pi\colon\HH\to\GG$ be a homomorphisms of quantum groups with corresponding bicharacter $V\in\Linf(\hh{\GG})\vtens\Linf(\HH)$ and right quantum group homomorphism $\alpha\colon\Linf(\GG)\to\Linf(\GG)\vtens\Linf(\HH)$. Then $V$ is the canonical implementation of $\alpha$.
\end{theorem}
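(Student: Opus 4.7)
The plan is to follow the proof of \cite[Proposition 4.3]{impl} essentially verbatim, substituting $\I$ for the modular element $\delta^{-1}$ appearing throughout Vaes's argument. There the canonical implementation $U$, built from a $\delta^{-1}$-invariant weight, is identified with the unitary $V_\theta$ that \cite[Proposition 2.4]{impl} characterizes by a short list of properties: an implementation identity, commutation with the modular operator and conjugation of the chosen weight tensored with the modular data of $\HH$, and a GNS-intertwining relation. Because we have chosen the genuinely right-invariant Haar weight $\bh$, no $\delta$-twist is needed, and the three properties of $V$ listed just before the statement of the theorem are precisely the right-invariant analogues of those characterizing $V_\theta$.

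Concretely, I would first match each listed property of $V$ with the corresponding clause in \cite[Proposition 2.4]{impl} stripped of its $\delta^{-1}$ factor. The implementation identity is \eqref{Vimpl}; the intertwining relation with $J^\GG\tens J^{\hh{\HH}}$ and commutation with $\nabla^\GG\tens\nabla^{\hh{\HH}}$ follow, as noted in the preamble to the theorem, from invariance of $V$ under $R^{\hh{\GG}}\tens R^\HH$ and under $\tau^{\hh{\GG}}\tens\tau^\HH$; and the GNS-intertwining identity $\bigl((\id\tens\omega)V\bigr)\eta(x)=\eta\bigl((\id\tens\omega)\alpha(x)\bigr)$ for $\omega\in\Lone(\HH)$ and $x\in\Dom(\eta)$ is the one derived in the preceding paragraph from \eqref{dodatek} and Proposition \ref{Teza}.

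With these properties in hand, I would then invoke the uniqueness portion of Vaes's proof. One constructs the dual weight $\widetilde{\bh}$ on $\Linf(\GG)\rtimes_\alpha\HH$ as in \cite[Definition 3.4]{impl}, extracts the canonical implementation $U=\widetilde{J}(J\tens\hh{J})$ recalled in Section \ref{subsAct}, and verifies that $U$ satisfies the same list of properties as $V$. Since the density of the slices $\bigl\{(\id\tens\omega)V\st\omega\in\Lone(\HH)\bigr\}$ in $\Linf(\hh{\GG})$, combined with the GNS-intertwining formula and the modular commutation relations, pins down such a unitary uniquely, one concludes that $U=V$.

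The main obstacle will be the careful bookkeeping needed to confirm that each step of \cite[Proposition 4.3]{impl} which relies on $\delta^{-1}$-invariance remains valid with $\delta^{-1}$ replaced by $\I$. In particular the identification of the modular conjugation $\widetilde{J}$ of $\widetilde{\bh}$, and the computation showing that $U$ obeys the intertwining relation with $J^\GG\tens J^{\hh{\HH}}$, need to be rechecked in the invariant-weight setting; however the parallelism between the two lists of properties strongly suggests that these verifications are routine simplifications of Vaes's original arguments, so that no essentially new ingredient beyond the three bullet-point properties of $V$ is required.
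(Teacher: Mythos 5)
Your proposal matches the paper's own argument essentially step for step: the paper likewise establishes the three bullet-point properties of $V$ (the implementation identity \eqref{Vimpl}, the commutation relations with $J^\GG\tens J^{\hh{\HH}}$ and $\nabla^\GG\tens\nabla^{\hh{\HH}}$ deduced from invariance under $R^{\hh{\GG}}\tens R^\HH$ and $\tau^{\hh{\GG}}\tens\tau^\HH$, and the GNS-intertwining relation via \eqref{dodatek} and Proposition \ref{Teza}) and then concludes by repeating the steps of Vaes's Proposition 4.3 with $\I$ in place of $\delta^{-1}$. Your bookkeeping caveat about rechecking the steps that use $\delta^{-1}$-invariance is exactly the ``careful examination'' the paper itself invokes, so there is no substantive difference between the two proofs.
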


Let us also remark that Theorem \ref{canonical} was actually stated and used already in \cite[Proof of Proposition 3.12]{DeCommer}.

We are now ready to use \cite[Theorem 5.3]{impl} which characterizes integrability of actions in the following terms: let $\alpha\colon\sN\to\sN\vtens\Linf(\HH)$ be an action of a locally compact quantum group $\HH$ on a von Neumann algebra $\sN$. Choose a n.s.f.~weight $\theta$ on $\sN$ and identify $\sN$ with its image in $\B(\cH_\theta)$. Furthermore let $U\in\B(\cH_\theta)\vtens\Linf(\HH)$ be the canonical implementation of $\alpha$ and let $\sN_2$ be the von Neumann algebra
\[
\sN_2=\bigl(\sN\cup{\bigl\{(\id\tens\omega)U\st}\omega\in\Lone(\HH)\bigr\}\bigr)''.
\]
Now \cite[Theorem 5.3]{impl} says that $\alpha$ is integrable if and only if there exists a normal surjective $*$-homomorphism $\rho\colon\sN\rtimes_\alpha\HH\to\sN_2$ such that
\begin{itemize}
\item $\rho\bigl(\alpha(x)\bigr)=x$ for all $x\in\sN$,
\item $\rho\bigl(\I\tens[(\id\tens\omega)\ww^\HH]\bigr)=(\id\tens\omega)U$ for all $\omega\in\Lone(\HH)$.
\end{itemize}

\section{Integrability and quantum subgroups}\label{cons}

In this section we will study homomorphisms of quantum groups with the property that the associated action is integrable. The first consequence will be that the kernel of such a homomorphism is in some sense small and the other that the quotient by the kernel can be canonically identified with the closure of the image. To make this more precise let $\Pi\colon\HH\to\GG$ be a homomorphism of quantum groups with associated bicharacter $V\in\Linf(\hh{\GG})\vtens\Linf(\HH)$. We will say that \emph{$\HH/\!\ker{\Pi}$ can be canonically identified with $\IM{\Pi}$} if there exists an isomorphism $\chi\colon\Linf\bigl(\IM{\Pi}\:\!\bigr)\to\Linf(\HH/\!\ker{\Pi})$ such that
\begin{equation}\label{chi}
(\id\tens\chi)(\ww^{\IM{\Pi}})=V.
\end{equation}
Note that indeed the right leg of $V$ belongs to $\Linf(\HH/\!\ker{\Pi})$, in fact we have
\[
V\in\Linf\bigl(\hh{\IM{\Pi}}\:\!\bigr)\vtens\Linf(\HH/\!\ker\Pi).
\]
We will write simply $\HH/\!\ker\Pi\cong\IM{\Pi}$ to denote this situation. This will not lead to confusion, as an isomorphism $\chi$ satisfying \eqref{chi} is necessarily unique.

\begin{theorem}\label{alphaInt}
Let $\GG$ and $\HH$ be locally compact quantum groups and let $\Pi\colon\HH\to\GG$ be a homomorphism of quantum groups. Assume that the corresponding right quantum group homomorphism $\alpha$ is integrable. Then
\begin{enumerate}
\item\label{alphaInt1} $\hh{\HH/\!\ker\Pi}$ is an open subgroup of $\hh{\HH}$,
\item\label{alphaInt2} $\HH/\!\ker\Pi\cong\IM{\Pi}$.
\end{enumerate}
\end{theorem}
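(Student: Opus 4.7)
Part \eqref{alphaInt1} is essentially a restatement: by Remark \ref{remPiAlpha}(1) we have $\HH/\!\ker\Pi=\HH/\!\ker\alpha$, and Proposition \ref{PropNH} applied to the integrable action $\alpha$ gives that $\hh{\HH/\!\ker\Pi}$ is an open quantum subgroup of $\hh{\HH}$.

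For part \eqref{alphaInt2}, the plan is to construct $\chi$ through Vaes's integrability criterion applied to $\alpha$. By Theorem \ref{canonical}, $V$ is the canonical implementation of $\alpha$, so integrability together with \cite[Theorem 5.3]{impl} (recalled at the end of Section \ref{sectCan}) supplies a normal surjective $*$-homomorphism
\[
\rho\colon\Linf(\GG)\rtimes_\alpha\HH\longrightarrow\sN_2=\bigl(\Linf(\GG)\cup\Linf(\hh{\IM{\Pi}})\bigr)''
\]
(where the identification of the generating set uses Definition \ref{DefImPi}) satisfying $\rho\bigl(\I\tens[(\id\tens\omega)\ww^\HH]\bigr)=(\id\tens\omega)V$ for all $\omega\in\Lone(\HH)$. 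Restricting $\rho$ to the subalgebra $\I\tens\Linf(\hh{\HH})$ of the crossed product then yields a normal surjective $*$-homomorphism
\[
\tilde\rho\colon\Linf(\hh{\HH})\longrightarrow\Linf(\hh{\IM{\Pi}}),\qquad(\tilde\rho\tens\id)\ww^\HH=V.
\]

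The next step is to verify that $\tilde\rho$ is a Hopf $*$-homomorphism. On the $\sigma$-weakly dense family of generators $(\id\tens\omega)\ww^\HH$, the identity $(\tilde\rho\tens\tilde\rho)\comp\Delta_{\hh{\HH}}=\Delta_{\hh{\IM{\Pi}}}\comp\tilde\rho$ reduces---via the pentagon $(\Delta_{\hh{\HH}}\tens\id)\ww^\HH=\ww^\HH_{23}\ww^\HH_{13}$ and the bicharacter equation $(\Delta_{\hh{\IM{\Pi}}}\tens\id)V=V_{23}V_{13}$---to applying $(\tilde\rho\tens\tilde\rho\tens\id)$ in the three-fold tensor product; normality of both sides then extends the identity to all of $\Linf(\hh{\HH})$. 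Hence $\tilde\rho$ realizes $\hh{\IM{\Pi}}$ as an open quantum subgroup of $\hh{\HH}$. Dually this produces an injective normal Hopf $*$-homomorphism $\hat\gamma\colon\Linf(\IM{\Pi})\hookrightarrow\Linf(\HH)$, and a short computation with $\ww^{\hh{\HH}}=\flip({\ww^\HH}^*)$ and the bicharacter formula for an open quantum subgroup recalled in Section \ref{qsub} gives $(\id\tens\hat\gamma)\ww^{\IM{\Pi}}=V$.

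Finally, to set $\chi=\hat\gamma$ we identify the image of $\hat\gamma$ with $\Linf(\HH/\!\ker\Pi)$. By the description in Section \ref{qsub} this image is the $\sigma$-weak closure of $\{(\omega\tens\id)V^*:\omega\in\Lone(\hh{\IM{\Pi}})\}$. Since $V\in\Linf(\hh{\IM{\Pi}})\vtens\Linf(\HH)$, slicing $V$ by $\phi\in\Lone(\hh{\GG})$ only depends on $\phi|_{\Linf(\hh{\IM{\Pi}})}$, so Definition \ref{DefKerPi} expresses $\Linf(\HH/\!\ker\Pi)$ as the $\sigma$-weak closure of $\{(\omega\tens\id)V:\omega\in\Lone(\hh{\IM{\Pi}})\}$. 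Both are $*$-closed von Neumann algebras, and each contains adjoints of the generators of the other, so they coincide. The main obstacle is the Hopf-map verification for $\tilde\rho$: carefully unwinding the three-fold bicharacter and pentagon identities and then propagating the result from the dense family of generators by normality.
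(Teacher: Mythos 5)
Your proposal is correct and follows essentially the same route as the paper: part \eqref{alphaInt1} via Proposition \ref{PropNH} and Remark \ref{remPiAlpha}, and part \eqref{alphaInt2} via Theorem \ref{canonical} together with Vaes's integrability criterion, restricting the resulting $\rho$ to $\I\tens\Linf(\hh{\HH})$ to exhibit $\hh{\IM{\Pi}}$ as an open (hence closed) quantum subgroup of $\hh{\HH}$ and then dualizing to obtain $\chi$ with $(\id\tens\chi)\ww^{\IM{\Pi}}=V$. The extra verifications you supply (the Hopf property of $\tilde\rho$ and the identification of the image of $\hat\gamma$ with $\Linf(\HH/\!\ker\Pi)$) are exactly the steps the paper leaves implicit, and they are carried out correctly.
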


\begin{proof}
Ad \eqref{alphaInt1}. This follows from Proposition \ref{PropNH} and Remark \ref{remPiAlpha}\eqref{remPiAlpha2}.

Ad \eqref{alphaInt2}. Let us denote $\Linf(\GG)$ by $\sN$, thus $\alpha\colon\sN\to\sN\vtens\Linf(\HH)$ and let us, as usual, denote by $V$ the bicharacter corresponding to $\Pi$. The assumption of integrability means that, by \cite[Theorem 5.3]{impl} and Theorem \ref{canonical}, we have a normal surjective $*$-homomorphism $\rho\colon\sN\rtimes_\alpha\HH\to\sN_2$, where $\sN_2$ is the von Neumann algebra generated by $\sN$ and $\bigl\{(\id\tens\omega)V\st\omega\in\Lone(\HH)\bigr\}$ such that
\begin{itemize}
\item $\rho\bigl(\alpha(x)\bigr)=x$ for all $x\in\sN$,
\item $\rho\bigl(\I\tens[(\id\tens\omega)\ww^\HH]\bigr)=(\id\tens\omega)V$ for all $\omega\in\Lone(\HH)$.
\end{itemize}
Let $\widetilde{\rho}$ denote the map $\Linf(\hh{\HH})\ni{y}\mapsto\rho(\I\tens{y})\in\sN_2$. By the second point above and the definition of $\IM\Pi$ the map $\widetilde{\rho}$ has image $\Linf\bigl(\hh{\IM\Pi}\bigr)$. Since it also preserves comultiplications, we find that $\hh{\IM\Pi}$ is an open subgroup of $\hh{\HH}$ and, as open subgroups are closed, there exists a unital injective normal $*$-homomorphism $\chi\colon\Linf\bigl(\IM\Pi\:\!\bigr)\hookrightarrow\Linf(\HH)$ such that
\begin{equation}\label{WHV}
(\widetilde{\rho}\tens\id)\ww^{\HH}=(\id\tens\chi)\ww^{\IM\Pi}
\end{equation}
(cf.~\cite[Remark 3.7]{open}). But the left hand side of \eqref{WHV} is $V$, due to properties of $\rho$, which proves the claim.
\end{proof}

\begin{theorem}\label{V2int}
Let $\GG$ and $\HH$ be locally compact quantum groups and let $\HH$ be a closed quantum subgroup of $\GG$ with right quantum group homomorphism $\alpha\colon\Linf(\GG)\to\Linf(\GG)\vtens\Linf(\HH)$ corresponding to the embedding $\HH\hookrightarrow\GG$. Then $\alpha$ is integrable.
\end{theorem}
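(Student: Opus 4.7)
The plan is to combine Theorem \ref{canonical} with the integrability criterion recalled at the end of Section \ref{sectCan} (viz.\ \cite[Theorem 5.3]{impl}). Since $\HH$ is a closed quantum subgroup of $\GG$, there is a normal injective $*$-homomorphism $\gamma\colon\Linf(\hh{\HH})\hookrightarrow\Linf(\hh{\GG})$ such that the bicharacter associated with the embedding is $V=(\gamma\tens\id)\ww^\HH$, and Theorem \ref{canonical} identifies this $V$ with the canonical implementation of $\alpha$.

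Writing $\sN=\Linf(\GG)$, the identity $(\id\tens\omega)V=\gamma\bigl((\id\tens\omega)\ww^\HH\bigr)$ (valid for every $\omega\in\Lone(\HH)$) shows that the algebra appearing in the criterion equals
\[
\sN_2 = \bigl(\Linf(\GG)\cup\gamma(\Linf(\hh{\HH}))\bigr)'' \subset \B(\Ltwo(\GG)).
\]
The task thus reduces to producing a normal surjective $*$-homomorphism $\rho\colon\sN\rtimes_\alpha\HH\to\sN_2$ satisfying $\rho(\alpha(x))=x$ and $\rho\bigl(\I\tens(\id\tens\omega)\ww^\HH\bigr)=\gamma\bigl((\id\tens\omega)\ww^\HH\bigr)$. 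I would obtain $\rho$ from the covariant pair $(\iota,V)$ on $\Ltwo(\GG)$, where $\iota$ is the standard inclusion of $\Linf(\GG)$ into $\B(\Ltwo(\GG))$ and $V$, viewed as a unitary in $\B(\Ltwo(\GG))\vtens\Linf(\HH)$, is a corepresentation of $\HH$ on $\Ltwo(\GG)$ by the bicharacter identity $(\id\tens\Delta_\HH)V=V_{12}V_{13}$. The covariance $(\iota\tens\id)\alpha(x)=V(\iota(x)\tens\I)V^*$ is exactly \eqref{Vimpl}, so the resulting map sends the generators of $\sN\rtimes_\alpha\HH$ to the prescribed images, and its image, being generated by $\iota(\Linf(\GG))=\Linf(\GG)$ and $\gamma(\Linf(\hh{\HH}))$, coincides with $\sN_2$.

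The main obstacle is justifying rigorously that the covariant pair $(\iota,V)$ lifts to a normal $*$-homomorphism out of the reduced crossed product $\sN\rtimes_\alpha\HH=\bigl(\alpha(\sN)\cup(\I\tens\Linf(\hh{\HH}))\bigr)''$; this is immediate in the \cst-algebraic framework, but in the von Neumann setting one has to verify that the $\sigma$-weak closure taken in the definition of the crossed product is respected by the assignment on generators. This is precisely where the explicit pullback form $V=(\gamma\tens\id)\ww^\HH$ and normality of $\gamma$ enter: every relation among the generators of $\sN\rtimes_\alpha\HH$ is a consequence of the bicharacter equations for $\ww^\HH$ and the definition of $\alpha$, and each such relation is mirrored faithfully in $\sN_2$ after substitution. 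Once $\rho$ is produced, surjectivity is built in, and integrability of $\alpha$ follows from \cite[Theorem 5.3]{impl}.
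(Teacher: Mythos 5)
Your reduction to \cite[Theorem 5.3]{impl} via Theorem \ref{canonical}, and your identification of $\sN_2$ with $\bigl(\Linf(\GG)\cup\gamma(\Linf(\hh{\HH}))\bigr)''$, agree with the paper. But the step you yourself flag as the main obstacle --- lifting the covariant pair $(\iota,V)$ to a normal $*$-homomorphism $\rho\colon\sN\rtimes_\alpha\HH\to\sN_2$ --- is a genuine gap, and the justification you sketch (``every relation among the generators is a consequence of the bicharacter equations and is mirrored faithfully after substitution'') cannot be made to work. A von Neumann crossed product has no universal property for covariant pairs: verifying algebraic relations on a $\sigma$-weakly dense subalgebra does not produce a \emph{normal} extension, and normality is exactly what is at stake. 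Indeed, \emph{every} action admits a covariant pair of this kind, namely $(\iota,U)$ with $U$ its canonical implementation, so if such pairs automatically induced normal surjections onto $\sN_2$, then by \cite[Theorem 5.3]{impl} every action would be integrable --- which is false (e.g.\ the action of $\RR$ on $\Linf(\mathbb{T}^2)$ associated to an irrational one-parameter subgroup is not integrable, by the paper's Theorem \ref{classInt}). So the existence of $\rho$ must be extracted from something specific to the closed-subgroup situation, and that is precisely the content of the proof, not a technical verification.

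The paper supplies the missing idea by writing $\rho$ as an explicit composition of manifestly normal maps: setting $Y=(J\tens{J})\ww^{\hh{\GG}}(J\tens{J})\in\Linf(\GG)'\vtens\Linf(\hh{\GG})$, which implements $\Delta_{\hh{\GG}^\op}$ on $\Linf(\hh{\GG})$, one defines
\[
\rho(a)=Y^*\bigl[(\id\tens\gamma)(V^*aV)\bigr]Y,\qquad a\in\sN\rtimes_\alpha\HH.
\]
Conjugation by $V$ carries the crossed product into $\B(\Ltwo(\GG))\vtens\Linf(\hh{\HH})$ (because $V^*\alpha(x)V=x\tens\I$ and $V^*(\I\tens{y})V=(\gamma\tens\id)\Delta_{\hh{\HH}^\op}(y)$), which is the domain of the normal map $\id\tens\gamma$; this is where normality of $\gamma$ actually enters, rather than through any statement about relations. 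The computation on generators,
\[
\rho\bigl(\alpha(x)(\I\tens{y})\bigr)=x\gamma(y)\tens\I,\qquad x\in\sN,\;y\in\Linf(\hh{\HH}),
\]
using $(\gamma\tens\gamma)\comp\Delta_{\hh{\HH}^\op}=\Delta_{\hh{\GG}^\op}\comp\gamma$ and the fact that $Y$ commutes with $\Linf(\GG)\tens\I$, then shows at once that $\rho$ has the two properties required by Vaes' criterion and that its (automatically $\sigma$-weakly closed) image is $\sN_2\tens\CC\I\cong\sN_2$. Your proposal is missing this construction --- the unitary $Y$, the conjugation trick placing $V^*aV$ in the domain of $\id\tens\gamma$ --- and without it, or an equivalent device, the argument does not go through.
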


As in the proof of Theorem \ref{alphaInt}, we will use the criterion of integrability from \cite{impl} mentioned at the end of Section \ref{sectCan}. We will in fact show that if $\HH$ is a closed subgroup of $\GG$ and we write $\sN$ for the algebra $\Linf(\GG)$ then the algebras $\sN\rtimes_\alpha\HH$ and $\sN_2$ are isomorphic with the isomorphism satisfying the requirements of \cite[Theorem 5.3]{impl}. Theorem \ref{V2int} was proved in \cite{DeCommer} by the same technique.

\begin{proof}[Proof of Theorem \ref{V2int}]
Let us begin with the remark that if $Y=(J\tens{J})\ww^{\hh{\GG}}(J\tens{J})$ then $Y\in\Linf(\GG)'\vtens\Linf(\hh{\GG})$ and $Y$ implements the opposite comultiplication on $\Linf(\hh{\GG})$:
\[
Y(u\tens\I)Y^*=\Delta_{\hh{\GG}^\op}(u),\qquad{u}\in\Linf(\hh{\GG}).
\]
Indeed, writing $\hh{R}$ for the unitary antipode of $\hh{\GG}$ we have
\[
\begin{split}
Y(u\tens\I)Y^*&=(J\tens{J})\ww^{\hh{\GG}}(J\tens{J})(u\tens\I)(J\tens{J}){\ww^{\hh{\GG}}}^*(J\tens{J})\\
&=(J\tens{J})\ww^{\hh{\GG}}\bigl(\hh{R}(u^*)\tens\I\bigr){\ww^{\hh{\GG}}}^*(J\tens{J})\\
&=(J\tens{J})\Delta_{\hh{\GG}}\bigl(\hh{R}(u^*)\bigr)(J\tens{J})\\
&=(J\tens{J})(\hh{R}\tens\hh{R})\bigl(\Delta_{\hh{\GG}^\op}(u^*)\bigr)(J\tens{J})=\Delta_{\hh{\GG}^\op}(u).
\end{split}
\]

Let us write $\sN$ for the algebra $\Linf(\GG)$. We can now define a map from $\sN\rtimes_\alpha\HH$ to the algebra $\sN_2$ which is the von Neumann algebra generated inside $\B(\Ltwo(\GG))$ by $\sN$ and $\bigl\{(\id\tens\omega)V\st\omega\in\Lone(\HH)\bigr\}$. Recall that since $\HH$ is a closed subgroup of $\GG$, we have a normal injective unital $*$-homomorphism $\gamma\colon\Linf(\hh{\HH})\hookrightarrow\Linf(\hh{\GG})$ such that
\[
(\gamma\tens\id)\ww^\HH=V.
\]

For $a\in\sN\rtimes_\alpha\HH$ we will consider the element
\begin{equation}\label{VaVY}
Y^*\bigl[(\id\tens\gamma)(V^*aV)\bigr]Y.
\end{equation}
It is not immediately clear that $V^*aV$ belongs to the domain of $\id\tens\gamma$, but it follows from the following computation. We know that elements of the form $\alpha(x)(\I\tens{y})=V(x\tens\I)V^*(\I\tens{y})$ with $x\in\sN$ and $y\in\Linf(\hh{\HH})$ span a dense subspace in $\sN\rtimes_\alpha\HH$ (\cite[Proposition 2.3]{proj}). Taking $a=V(x\tens\I)V^*(\I\tens{y})$ in \eqref{VaVY} yields
\[
\begin{split}
Y^*\bigl[(\id\tens\gamma)\bigl((x\tens\I)V^*(\I\tens{y})V\bigr)\bigr]Y
&=Y^*\bigl[\bigl((x\tens\I)(\gamma\tens\gamma)\bigl({\ww^{\HH}}^*(\I\tens{y})\ww^\HH\bigr)\bigr)\bigr]Y\\
&=Y^*\bigl[(x\tens\I)(\gamma\tens\gamma)\bigl(\Delta_{\hh{\HH}^\op}(y)\bigr)\bigr]Y\\
&=Y^*\bigl[(x\tens\I)\Delta_{\hh{\GG}^\op}\bigl(\gamma(y)\bigr)\bigr]Y\\
&=Y^*\bigl[(x\tens\I)Y\bigl(\gamma(y)\tens\I\bigr)Y^*\bigr]Y\\
&=Y^*\bigl[Y(x\tens\I)\bigl(\gamma(y)\tens\I\bigr)Y^*\bigr]Y\\
&=(x\tens\I)\bigl(\gamma(y)\tens\I\bigr)=x\gamma(y)\tens\I.
\end{split}
\]
Thus there exists a normal injective unital $*$-homomorphism $\rho\colon\sN\rtimes_\alpha\HH\to\sN_2$ such that
\[
\rho\bigl(\alpha(x)(\I\tens{y})\bigr)=x\gamma(y),\qquad{x}\in\sN,\:y\in\Linf(\hh{\HH}).
\]
In particular
\begin{itemize}
\item $\rho\bigl(\alpha(x)\bigr)=x$ for $x\in\sN$,
\item $\rho\bigl(\I\tens[(\id\tens\omega)\ww^\HH]\bigr)=(\id\tens\omega)V$ for $\omega\in\Lone(\HH)$.
\end{itemize}
Clearly $\sN$ and $\gamma\bigl(\Linf(\hh{\HH})\bigr)$ are both contained in the range of $\rho$, so $\rho$ must be surjective.
\end{proof}

\begin{theorem}\label{intThm}
Let $\GG$ and $\HH$ be locally compact quantum groups and let $\Pi\colon\HH\to\GG$ be a homomorphism of quantum groups with corresponding right quantum group homomorphism $\alpha\colon\Linf(\GG)\to\Linf(\GG)\vtens\Linf(\HH)$. Assume that
\begin{enumerate}
\item\label{intThm1} $\hh{\HH/\!\ker\Pi}$ is an open subgroup of $\hh{\HH}$,
\item $\HH/\!\ker\Pi\cong\IM{\Pi}$.
\end{enumerate}
Then $\alpha$ is integrable.
\end{theorem}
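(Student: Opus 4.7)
The plan is to factor $\alpha$ through the canonical action of the closed quantum subgroup $\IM\Pi\subset\GG$, whose integrability is already established in Theorem \ref{V2int}, and then use assumption (1) to transport the relevant Haar weight from $\HH$ onto $\IM\Pi$.

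First, let $\beta\colon\Linf(\GG)\to\Linf(\GG)\vtens\Linf(\IM\Pi)$ be the right quantum group homomorphism associated with the closed inclusion $\IM\Pi\hookrightarrow\GG$; by Theorem \ref{V2int}, $\beta$ is integrable. Assumption (2) supplies an isomorphism $\chi\colon\Linf(\IM\Pi)\to\Linf(\HH/\!\ker\Pi)\subset\Linf(\HH)$ with $V=(\id\tens\chi)\ww^{\IM\Pi}$ (after the standard identification of $\Linf(\hh{\IM\Pi})$ with its image inside $\Linf(\hh\GG)$). Since $\alpha(x)=V(x\tens\I)V^*$ and $\beta(x)$ is implemented analogously by $\ww^{\IM\Pi}$, this immediately yields the factorisation
\[
\alpha=(\id\tens\chi)\comp\beta.
\]

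Next, I invoke assumption (1). Corollary \ref{openCor}, applied to the open inclusion $\hh{\HH/\!\ker\Pi}\subset\hh\HH$, gives that the right Haar weight $\bh^{\HH/\!\ker\Pi}$ is proportional to the restriction of $\bh^\HH$ to $\Linf(\HH/\!\ker\Pi)$. Since $\Linf(\HH/\!\ker\Pi)$ is a Baaj-Vaes subalgebra of $\Linf(\HH)$, the unitary antipode $R^\HH$ restricts to $R^{\HH/\!\ker\Pi}$, and using $\bhL=\bh\comp R$ the same proportionality then transfers to the left Haar weights:
\[
\bigl.\bhL^\HH\bigr|_{\Linf(\HH/\!\ker\Pi)}=c\cdot\bhL^{\HH/\!\ker\Pi}
\]
for some $c>0$. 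Transporting along the quantum group isomorphism $\chi$, uniqueness of the left Haar weight up to a positive scalar gives $\bhL^\HH\comp\chi=c'\cdot\bhL^{\IM\Pi}$ for some $c'>0$. Consequently, for every $x\in\Linf(\GG)_+$,
\[
(\id\tens\bhL^\HH)\alpha(x)=\bigl(\id\tens(\bhL^\HH\comp\chi)\bigr)\beta(x)=c'\cdot(\id\tens\bhL^{\IM\Pi})\beta(x),
\]
and the right-hand side lies in $\Linf(\GG)_+$ for a $\sigma$-weakly dense set of $x$ by integrability of $\beta$. Hence $\alpha$ is integrable.

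The main technical obstacle is the careful matching of Haar weights across the three quantum groups $\HH$, $\HH/\!\ker\Pi$, and $\IM\Pi$, and between the left and right invariance sides: Corollary \ref{openCor} is phrased for right Haar weights, whereas integrability of a right action is defined via the left Haar weight, so one must use the Baaj-Vaes structure of $\Linf(\HH/\!\ker\Pi)\subset\Linf(\HH)$ together with the relation $\bhL=\bh\comp R$ to bridge the two. Once this identification is in place, the conclusion is immediate from the factorisation $\alpha=(\id\tens\chi)\comp\beta$ and Theorem \ref{V2int}.
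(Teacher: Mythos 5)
Your proof is correct and takes essentially the same approach as the paper's: the action $\widetilde{\alpha}=(\id\tens\chi^{-1})\comp\alpha$ used there is exactly your $\beta$, so both arguments factor $\alpha$ through the integrable action of the closed quantum subgroup $\IM{\Pi}$ (Theorem \ref{V2int}) and then invoke assumption \eqref{intThm1} via Corollary \ref{openCor} to match the Haar weights. If anything, your bookkeeping for the passage from the right Haar weight (the subject of Corollary \ref{openCor}) to the left Haar weight (which defines integrability of a right action), using the Baaj--Vaes structure of $\Linf(\HH/\!\ker\Pi)\subset\Linf(\HH)$ and $\bhL=\bh\comp R$, is more explicit than the paper's, which compresses this step into a single sentence.
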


\begin{proof}
As we already noticed in the first part of the proof of Theorem \ref{alphaInt}, the image of $\alpha$ is contained in $\Linf(\GG)\vtens\Linf(\HH/\!\ker\Pi)$. Now let $\chi\colon\Linf\bigl(\IM{\Pi}\:\!\bigr)\to\Linf(\HH/\!\ker{\Pi})$ be as in \eqref{chi} and let $\widetilde{\alpha}=(\id\tens\chi^{-1})\comp\alpha$. Then $\widetilde{\alpha}$ is an action of $\IM\Pi$ on $\Linf(\GG)$ implemented by the bicharacter $(\id\tens\chi^{-1})V\in\Linf(\hh{\GG})\vtens\Linf\bigl(\IM{\Pi}\:\!\bigr)$. But we know that $\IM\Pi$ is a closed subgroup of $\GG$ via the corresponding homomorphism of quantum groups, so by Theorem \ref{V2int} the action $\widetilde{\alpha}$ is integrable. To conclude that $\alpha$ is integrable we note that due to assumption \eqref{intThm1} the Haar measure of $\HH/\!\ker\Pi$ is (proportional to) the restriction of the Haar measure of $\HH$ to $\Linf(\HH/\!\ker\Pi)$ by Corollary \ref{openCor}.
\end{proof}

\begin{remark}
Theorems \ref{alphaInt} and \ref{intThm} provide the following characterization of those homomorphisms of quantum groups $\Pi\colon\HH\to\GG$ for which the corresponding right quantum group homomorphism $\alpha\colon\Linf(\GG)\to\Linf(\GG)\vtens\Linf(\HH)$ is an integrable action: $\alpha$ is integrable if and only if the conditions \eqref{alphaInt1} and \eqref{alphaInt2} from Theorem \ref{alphaInt} hold.
\end{remark}

\begin{corollary}\label{free}
Let $\GG$ and $\HH$ be locally compact quantum groups and let $\Pi\colon\HH\to\GG$ be a homomorphism of quantum groups with corresponding right quantum group homomorphism $\alpha\colon\Linf(\GG)\to\Linf(\GG)\vtens\Linf(\HH)$. Assume that $\alpha$ is integrable and free. Then $\Pi$ identifies $\HH$ with a closed quantum subgroup of $\GG$.
\end{corollary}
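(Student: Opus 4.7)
The plan is to extract two ingredients from the hypotheses. Freeness of $\alpha$ will force the identity $\Linf(\HH/\!\ker\Pi) = \Linf(\HH)$, while integrability --- via Theorem \ref{alphaInt}\eqref{alphaInt2} --- will supply a quantum group isomorphism between $\IM\Pi$ and $\HH/\!\ker\Pi$ which is compatible with the bicharacter $V$. Putting the two together, $\Pi$ will factor as an isomorphism $\HH\cong\IM\Pi$ followed by the closed embedding $\IM\Pi\hookrightarrow\GG$ furnished by Definition \ref{DefImPi}, making $\HH$ a closed quantum subgroup of $\GG$ via $\Pi$.

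The first step is immediate from Theorem \ref{ThmPi}, which identifies $\Linf(\HH/\!\ker\Pi)$ with the von Neumann algebra generated by the slices $\bigl\{(\omega\tens\id)\alpha(x) : x \in \Linf(\GG),\,\omega \in \Lone(\GG)\bigr\}$. Freeness of $\alpha$, applied with $\sN = \Linf(\GG)$, is precisely the assertion that this set generates all of $\Linf(\HH)$. So $\HH/\!\ker\Pi = \HH$, and Theorem \ref{alphaInt}\eqref{alphaInt2} now yields an isomorphism $\chi\colon\Linf(\IM\Pi) \to \Linf(\HH)$ satisfying
\[
(\iota\tens\chi)\ww^{\IM\Pi} = V,
\]
with $\iota\colon\Linf(\hh{\IM\Pi}) \hookrightarrow \Linf(\hh\GG)$ the embedding from Definition \ref{DefImPi}. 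Chasing the bicharacter identity $(\id\tens\Delta_\HH)V = V_{12}V_{13}$ through this equality shows that $\chi$ intertwines the comultiplications, so it is in fact an isomorphism of locally compact quantum groups.

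The only subtle step, which I would regard as the crux, is passing from $\chi$ to a Hopf $*$-homomorphism at the dual level. Since $\chi$ is an isomorphism of locally compact quantum groups, it admits a dual isomorphism $\hh\chi\colon\Linf(\hh{\IM\Pi}) \to \Linf(\hh\HH)$ --- concretely, implement $\chi$ through a unitary $U\colon\Ltwo(\IM\Pi)\to\Ltwo(\HH)$ coming from the GNS construction and set $\hh\chi(\cdot) := U(\cdot)U^*$ --- satisfying $(\hh\chi\tens\chi)\ww^{\IM\Pi} = \ww^\HH$. Setting $\gamma := \iota \comp \hh\chi^{-1}\colon\Linf(\hh\HH)\to\Linf(\hh\GG)$ yields an injective normal Hopf $*$-homomorphism, and the intertwining relation rearranges to
\[
(\gamma \tens \id)\ww^\HH = (\iota\tens\chi)\ww^{\IM\Pi} = V.
\]
By the definition recalled in Section \ref{qsub}, this exhibits $\HH$ as a closed quantum subgroup of $\GG$ via $\Pi$.
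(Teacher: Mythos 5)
Your proposal is correct and follows essentially the same route as the paper: freeness plus Theorem \ref{ThmPi} gives $\Linf(\HH/\!\ker\Pi)=\Linf(\HH)$, integrability plus Theorem \ref{alphaInt}\eqref{alphaInt2} gives the isomorphism $\chi$ with $(\id\tens\chi)\ww^{\IM{\Pi}}=V$, and dualizing $\chi$ yields the injective normal unital $*$-homomorphism $\gamma\colon\Linf(\hh{\HH})\to\Linf(\hh{\GG})$ with $(\gamma\tens\id)\ww^{\HH}=V$, which is the definition of a closed quantum subgroup. The only cosmetic difference is that the paper obtains the dual map by citing \cite{MRW} and \cite[Theorem 1.10]{DKSS} (with the dual running from $\Linf(\hh{\HH})$ to $\Linf\bigl(\hh{\IM{\Pi}}\:\!\bigr)$, i.e.\ the inverse of yours), whereas you sketch its construction via the GNS unitary implementing $\chi$.
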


\begin{proof}
The freeness of $\alpha$ means that ${\bigl\{(\omega\tens\id)\alpha(x)\st{x}}\in\Linf(\GG),\:\omega\in\Lone(\GG)\bigr\}''$ is all of $\Linf(\HH)$. In view of Theorem \ref{ThmPi}, this means that $\Linf(\HH/\!\ker{\Pi})=\Linf(\HH)$ (cf.~discussion after Definition \ref{DefKerPi}). Since $\alpha$ is assumed to be integrable, by Theorem \ref{alphaInt}\eqref{alphaInt2}, we have $\HH/\!\ker{\Pi}\cong\IM{\Pi}$ in the sense that there exists an isomorphism $\chi\colon\Linf\bigl(\IM{\Pi}\:\!\bigr)\to\Linf(\HH/\!\ker{\Pi})$ such that
\begin{equation}\label{chiWV}
(\id\tens\chi)\ww^{\IM{\Pi}}=V
\end{equation}
(cf.~remarks before Theorem \ref{alphaInt}).

Thus $\chi$ can be regarded as an isomorphism $\Linf\bigl(\IM{\Pi}\:\!\bigr)\to\Linf(\HH)$ with the property \eqref{chiWV}. Let $\hh{\chi}$ be the dual of $\chi$. Then $\hh{\chi}$ is an isomorphism $\Linf(\hh{\HH})\to\Linf\bigl(\hh{\IM{\Pi}}\:\!\bigr)$ such that $(\hh{\chi}\tens\id)\ww^{\HH}=(\id\tens\chi)\ww^{\IM{\Pi}}$, cf.~\cite{MRW}, \cite[Theorem 1.10]{DKSS}. In other words we have
\[
V=(\hh{\chi}\tens\id)\ww^\HH,
\]
where $\hh{\chi}$ is a normal, unital injective $*$-homomorphism. It follows that the bicharacter $V$ identifies $\HH$ with a closed quantum subgroup of $\GG$.
\end{proof}

We now turn our attention to quantum subgroups closed in the sense of Woronowicz (\cite[Definition 3.2]{DKSS}). Let us recall that if $\HH$ and $\GG$ are locally compact quantum groups and $\Pi\colon\HH\to\GG$ identifies $\HH$ with a \emph{closed quantum subgroup of $\GG$ in the sense of Woronowicz} if the bicharacter $V$ associated with $\Pi$ generates the \cst-algebra $\C_0(\HH)$. One of the equivalent formulations of this condition is that
\[
\bigl\{(\omega\tens\id)V\st\omega\in\B(\Ltwo(\GG))_*\bigr\}
\]
is norm dense in $\C_0(\HH)$ (\cite[Theorem 3.6]{DKSS}). In particular it is then $\sigma$-weakly dense in $\Linf(\HH)$. In what follows we will use the phrase ``Woronowicz-closed quantum subgroup'' instead of ``closed quantum subgroup in the sense of Woronowicz'' in order to make the statements more transparent.

\begin{corollary}\label{W2V}
Let $\GG$ and $\HH$ be locally compact quantum groups and let $\Pi\colon\HH\to\GG$ be a homomorphism of quantum groups which identifies $\HH$ with a Woronowicz-closed quantum subgroup of $\GG$. If the corresponding right quantum group homomorphism $\alpha\colon\Linf(\GG)\to\Linf(\GG)\vtens\Linf(\HH)$ is integrable then $\Pi$ identifies $\HH$ with a closed quantum subgroup of $\GG$.
\end{corollary}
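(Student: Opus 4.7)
The plan is to deduce this corollary from Corollary \ref{free} by showing that the Woronowicz-closedness hypothesis forces the action $\alpha$ to be free, so that Corollary \ref{free} applies verbatim.

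First, I would unpack the Woronowicz-closedness assumption using the characterization recalled just before the statement of the corollary: it says that
\[
\bigl\{(\omega\tens\id)V\st\omega\in\B(\Ltwo(\GG))_*\bigr\}
\]
is $\sigma$-weakly dense in $\Linf(\HH)$. Since every normal functional on $\B(\Ltwo(\GG))$ restricts to an element of $\Lone(\hh{\GG})=\Linf(\hh{\GG})_*$ and, conversely, every element of $\Lone(\hh{\GG})$ is obtained as such a restriction, this dense set is the same as the one appearing in Definition \ref{DefKerPi}. Consequently the Woronowicz-closedness assumption is equivalent to the identity $\Linf(\HH/\!\ker\Pi)=\Linf(\HH)$.

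Next I would invoke Theorem \ref{ThmPi}, which identifies $\Linf(\HH/\!\ker\Pi)$ with the von Neumann algebra
\[
\bigl\{(\omega\tens\id)\alpha(x)\st{x}\in\Linf(\GG),\:\omega\in\Lone(\GG)\bigr\}''.
\]
Combined with the previous paragraph this gives that the latter set equals $\Linf(\HH)$, which is exactly the definition of freeness of $\alpha$ recalled at the end of Section \ref{subsAct}.

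At this point $\alpha$ is both integrable (by hypothesis) and free (just established), so Corollary \ref{free} applies and yields that $\Pi$ identifies $\HH$ with a closed quantum subgroup of $\GG$. The whole argument is essentially a bookkeeping exercise once the link \emph{Woronowicz-closedness $\Longleftrightarrow$ freeness of $\alpha$} is noticed; the only subtle point, and the one I would take care to spell out, is the passage from normal functionals on $\B(\Ltwo(\GG))$ to $\Lone(\hh{\GG})$ that equates the two slice sets.
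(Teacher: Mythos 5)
Your proof is correct and follows essentially the same route as the paper's: the paper's own two-line proof likewise deduces freeness of $\alpha$ from the $\sigma$-weak density of the slices of $V$ recalled before the statement and then invokes Corollary \ref{free}, and your argument simply makes explicit the intermediate bookkeeping through Definition \ref{DefKerPi} and Theorem \ref{ThmPi} that the paper leaves as ``clear''. One minor overstatement: Woronowicz-closedness (norm density of the slices in $\C_0(\HH)$) \emph{implies} the identity $\Linf(\HH/\!\ker\Pi)=\Linf(\HH)$, but the converse direction is not justified (and is exactly the delicate point separating the two notions of closedness); fortunately your proof only uses the forward implication, so nothing is affected.
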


\begin{proof}
It is clear from the remark preceding the corollary that if $\HH$ is a Woronowicz-closed quantum subgroup of $\GG$ then $\alpha$ is free. As it is assumed to be integrable, the fact that it is a closed quantum subgroup as defined in Section \ref{qsub} follows from Corollary \ref{free}.
\end{proof}

\begin{remark}
\noindent
\begin{enumerate}
\item Corollary \ref{W2V} together with Theorem \ref{V2int} show that closed quantum subgroups are precisely Woronowicz-closed quantum subgroups with integrable corresponding right quantum group homomorphism.
\item This gives as an easy consequence an immediate proof of \cite[Theorem 6.1]{DKSS} as any action of a compact quantum group is integrable.
\item Finally, Corollary \ref{W2V} is really a characterization of those homomorphisms of quantum groups $\Pi\colon\HH\to\GG$ which identify $\HH$ with a closed quantum subgroup of $\GG$. Indeed, if $\HH$ is a closed quantum subgroup then the action corresponding to $\Pi$ is free (because a closed quantum subgroup of $\GG$ is a Woronowicz-closed quantum subgroup of $\GG$, \cite[Theorem 3.5]{DKSS}) and it is integrable by Theorem \ref{V2int}.
\end{enumerate}
\end{remark}

\section{Appendix}\label{appendix}

\subsection{Classical case}

Let us discuss the results we obtained for locally compact quantum groups in the special case when the groups involved are in fact classical. In what follows the symbol $\vN(K)$ will denote the (right) group von Neumann algebra of a locally compact group $K$. 

Thus let $G$ and $H$ be locally compact groups and let $\Pi\colon{H}\to{G}$ be a continuous homomorphism. The three equivalent descriptions of $\Pi$ used when dealing with quantum groups take the following form:
\begin{itemize}
\item the bicharacter $V\in\Linf(\hh{G})\vtens\Linf(H)=\vN(G)\vtens\Linf(H)\cong\Linf(H,\vN(G))$ is the representation of $H$ on $\Ltwo(G)$ by right shifts along the image of $\Pi$:
\[
(V_h\psi)(g)=\psi\bigl(g\Pi(h)\bigr),\qquad\psi\in\Ltwo(G),\:g\in{G},\:h\in{H}.
\]
\item The action $\alpha\colon\Linf(G)\to\Linf(G)\vtens\Linf(H)\cong\Linf(G\times{H})$ is the pull-back of the natural right action of $H$ on $\Linf(G)$ arising from $\Pi$:
\[
g\cdot{h}=g\Pi(h),\qquad{g}\in{G},\:h\in{H},
\]
so that for $f\in\Linf(G)$ the function $\alpha(f)\in\Linf(G\times{H})$ is $\alpha(f)(g,h)=f\bigl(g\Pi(h)\bigr)$.
\item The Hopf $*$-homomorphism $\pi\in\Mor(\C_0(G),\C_0(H))$ is the pre-composition with $\Pi$:
\[
\pi(f)=f\comp\Pi,\qquad{f}\in\C_0(G).
\]
\end{itemize}

Let us fix $G$, $H$ and $\Pi$ (so we also have $V$, $\alpha$ and $\pi$). We leave to the reader the verification that
\[
{\bigl\{(\id\tens\zeta)V\st\zeta\in\Lone(H)\bigr\}}^{\sigma-\text{\rm{weak closure}}\hspace{-\sw}}
\]
and
\[
{\bigl\{(\phi\tens\id)V\st\phi}\in\vN(G)_*\bigr\}^{\sigma-\text{\rm{weak closure}}\hspace{-\sw}}
\]
coincide respectively with $\vN\bigl(\IM{\Pi}\!\:\bigr)$ canonically embedded in $\vN(G)$ and $\Linf(H/\!\ker{\Pi})$ viewed as a subalgebra of $\Linf(H)$ (functions constant on cosets of $\ker{\Pi}$). 

It is easy to see that the action $\alpha$ is integrable if and only if the set of positive $f\in\Linf(G)$ such that the integral
\[
\int\limits_Hf\bigl(\,\cdot\:\Pi(h)\bigr)d\bhL(h)
\]
exists in $\Linf(G)$ is $\sigma$-weakly dense in $\Linf(G)_+$.

Rewriting the results of Section \ref{cons} for classical groups we find the following:

\begin{theorem}\label{classInt}
Let $G$ and $H$ be locally compact groups and let $\Pi\colon{H}\to{G}$ be a continuous homomorphism. Let $\alpha$ be the action of $H$ on $\Linf(G)$ corresponding to $\Pi$. Then $\alpha$ is integrable if and only if
\begin{enumerate}
\item\label{classInt1} $\ker{\Pi}$ is compact,
\item the image of $\Pi$ is closed and $H/\!\ker{\Pi}$ is homeomorphic to $\mathrm{im}\,\Pi$.
\end{enumerate}
\end{theorem}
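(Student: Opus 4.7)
The plan is to derive Theorem~\ref{classInt} from the characterization of integrability already established in Section~\ref{cons}. By Theorems~\ref{alphaInt} and \ref{intThm} (combined with the remark following Theorem~\ref{intThm}) the action $\alpha$ is integrable if and only if (i) $\widehat{H/\ker\Pi}$ is an open quantum subgroup of $\widehat{H}$, and (ii) $H/\ker\Pi\cong\IM{\Pi}$ as quantum groups. By Remark~\ref{remImKer}, the quantum groups $H/\ker\Pi$ and $\IM{\Pi}$ introduced in Definitions~\ref{DefKerPi} and \ref{DefImPi} agree with the classical quotient and classical closure of image. It therefore suffices to translate (i) and (ii) into the purely classical conditions (1) and (2) of the theorem.

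For condition (i), I would argue that $\widehat{H/\ker\Pi}$ is open in $\widehat{H}$ if and only if $N:=\ker\Pi$ is compact. Using Corollary~\ref{openCor}, openness is equivalent to the right Haar weight $\widehat{\bh}$ of $\widehat{H}$ restricting via the natural inclusion $\vN(H/N)\hookrightarrow\vN(H)$ to a weight proportional to the Plancherel weight of $\widehat{H/N}$; by standard Plancherel theory this restriction is semifinite precisely when $N$ has finite Haar measure. Equivalently, via the discussion in Section~\ref{qsub}, openness is equivalent to the existence of a central support projection $\I_{\widehat{H/N}}\in\vN(H)$ for the inclusion; this projection is the normalized averaging operator $\frac{1}{|N|}\int_N\lambda_n\,dn$, which belongs to $\vN(H)$ precisely when $N$ is compact.

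For condition (ii), I would use that an isomorphism of commutative locally compact quantum groups corresponds to a topological isomorphism of the underlying classical groups (this can be extracted from the Hopf $*$-homomorphism $\pi$ via Gelfand duality). The homomorphism $\Pi$ always induces a continuous injective group homomorphism $\varphi\colon H/\ker\Pi\to\IM{\Pi}$ with dense image. The quantum isomorphism $H/\ker\Pi\cong\IM{\Pi}$ amounts to $\varphi$ being a homeomorphism, which splits into: $\mathrm{im}\,\Pi$ being closed in $G$ (so that $\varphi$ is surjective), and $\varphi$ being a homeomorphism onto $\mathrm{im}\,\Pi$ --- exactly condition (2).

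The main obstacle I expect is making the translation in step (i) fully rigorous: one has to pin down the averaging projection as the central support of $\vN(H/N)$ inside $\vN(H)$ and verify semifiniteness of the restricted Plancherel weight carefully. The translation of (ii) is essentially a bookkeeping exercise once the Gelfand-type correspondence between classical groups and their commutative quantum-group incarnations is invoked. Combining both translations with the quantum-group equivalence recalled in the first paragraph then yields Theorem~\ref{classInt}.
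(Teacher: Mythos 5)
Your overall skeleton is exactly the paper's: the paper proves Theorem~\ref{classInt} by invoking the characterization ``$\alpha$ integrable $\Leftrightarrow$ $\hh{H/\!\ker\Pi}$ open in $\hh{H}$ and $H/\!\ker\Pi\cong\IM\Pi$'' (Theorems~\ref{alphaInt}, \ref{intThm} and the remark following them), translating condition (2) classically (details left to the reader, essentially your Gelfand-duality bookkeeping, which is fine), and disposing of condition (1) by citing \cite[Theorem 7.2]{open} for the equivalence ``$\ker\Pi$ compact $\Leftrightarrow$ $\hh{H/\!\ker\Pi}$ open in $\hh{H}$''. So the only substantive point is whether your self-contained argument for this last equivalence works --- and as written it does not.

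The problem is that you apply Corollary~\ref{openCor} with the wrong algebras. In that corollary, taking $\HH=\hh{H/\!\ker\Pi}$ and $\GG=\hh{H}$, the injection $\gamma$ goes between the $\Linf$-algebras of the \emph{duals} of these quantum groups; since the dual of $\hh{H/N}$ is $H/N$ (where $N=\ker\Pi$) and the dual of $\hh{H}$ is $H$, the relevant map is $\gamma\colon\Linf(H/N)\hookrightarrow\Linf(H)$ (functions constant on cosets of $N$), and the relevant weight is the Haar measure of $H$ --- not the Plancherel weight of $\vN(H)$ restricted along an inclusion $\vN(H/N)\hookrightarrow\vN(H)$. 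Indeed no unital, comultiplication-compatible embedding $\vN(H/N)\hookrightarrow\vN(H)$ exists in general: by \cite[Theorem 3.3]{DKSS} such an embedding would identify $H/N$ with a closed quantum subgroup of $H$, which fails already for $H=\mathbb{R}$, $N=\mathbb{Z}$. The only inclusion available is the non-unital corner embedding, which exists precisely when $\hh{H/N}$ is \emph{already} open; for the same reason your averaging-projection argument (``$\frac{1}{|N|}\int_N\lambda_n\,dn$ belongs to $\vN(H)$ precisely when $N$ is compact'') presupposes compactness of $N$ in the very direction (open $\Rightarrow$ compact) where it is needed. The repair is short: apply Corollary~\ref{openCor} correctly, so that openness of $\hh{H/N}$ in $\hh{H}$ becomes semifiniteness of the Haar integral of $H$ on nonnegative $N$-periodic functions; by Weil's integration formula $\int_H f_0(hN)\,dh$ is infinite for every nonzero such $f_0\geq 0$ unless $N$ has finite Haar measure, and a closed subgroup of finite Haar measure is compact. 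Alternatively, simply cite \cite[Theorem 7.2]{open} as the paper does.
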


We leave the details to the reader, but let us only mention that condition \eqref{classInt1} of Theorem \ref{classInt} is equivalent to $\hh{H/\!\ker{\Pi}}$ being an open quantum subgroup of $\hh{H}$ by \cite[Theorem 7.2]{open}. In the special case of an injective morphism we obtain the following corollary:

\begin{corollary}
Let $G$ and $H$ be locally compact groups and let $\Pi\colon{H}\to{G}$ be a continuous injective homomorphism. Then the image of $\Pi$ is closed and $\Pi$ is a homeomorphism onto $\mathrm{im}\,\Pi$ if and only if the associated action of $H$ on $\Linf(G)$ is integrable.
\end{corollary}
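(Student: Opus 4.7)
The plan is to deduce the corollary as a direct specialization of Theorem \ref{classInt}. Since $\Pi$ is injective, I first observe that $\ker\Pi = \{e\}$, which is trivially compact, so condition \eqref{classInt1} of Theorem \ref{classInt} is automatically satisfied. This reduces the equivalence provided by Theorem \ref{classInt} to: $\alpha$ is integrable if and only if $\mathrm{im}\,\Pi$ is closed in $G$ and $H/\!\ker{\Pi}$ is homeomorphic to $\mathrm{im}\,\Pi$.

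Next I would canonically identify $H/\!\ker{\Pi}$ with $H$ itself, using Definition \ref{DefKerPi} together with Remark \ref{remImKer} (which confirms that in the classical case the quantum-group quotient recovers the ordinary topological quotient by the kernel). Under this identification, the canonical map $H/\!\ker{\Pi}\to\mathrm{im}\,\Pi$ furnished by $\Pi$ is literally $\Pi$ itself. Consequently, the homeomorphism condition in Theorem \ref{classInt}\eqref{classInt1} becomes precisely the statement that $\Pi$ is a homeomorphism of $H$ onto its image.

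Combining these two reductions yields the corollary. The only verification worth spelling out is that the homeomorphism appearing in Theorem \ref{classInt} really is induced by $\Pi$ (and not by some auxiliary map); this is a routine check from the construction of $\Linf(H/\!\ker\Pi)$ as the $\sigma$-weakly closed subalgebra of $\Linf(H)$ generated by slices of the bicharacter $V$, which in the classical setting consists exactly of the functions pulled back from $\mathrm{im}\,\Pi$ via $\Pi$. There is no substantial obstacle in this argument; the content of the corollary lies entirely in Theorem \ref{classInt}, which in turn rests on Theorems \ref{alphaInt} and \ref{intThm}.
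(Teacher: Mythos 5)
Your proof is correct and is essentially the paper's own argument: the paper presents this corollary as the direct specialization of Theorem \ref{classInt} to injective $\Pi$, where the kernel is trivial (hence compact, so condition \eqref{classInt1} holds automatically) and the homeomorphism condition reduces to $\Pi$ being a homeomorphism of $H$ onto its closed image. Your additional check that the homeomorphism in Theorem \ref{classInt} is the canonical one induced by $\Pi$ is exactly the detail the paper leaves to the reader.
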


\subsection{Characterization of compact quantum subgroups}\label{cptSec}

In this section we will present a characterization of compact quantum subgroups somewhat analogous to that of open quantum subgroups given in Theorem \ref{openThm} (cf.~also Corollary \ref{final}). To that end let us recall from \cite[Definition 4.1]{impl} (see also \cite[Lemma 3.11]{eqhs}) that if $\HH$ and $\GG$ are locally compact quantum groups and $\HH$ is identified with a closed subgroup of $\GG$ via $\gamma\colon\Linf(\hh{\HH})\hookrightarrow\Linf(\hh{\GG})$ then the \emph{measured homogeneous space} $\GG/\HH$ is described by the left coideal $\Linf(\GG/\HH)\subset\Linf(\GG)$ defined as the co-dual of $\gamma\bigl(\Linf(\hh{\HH})\bigr)\subset\Linf(\hh{\GG})$ (see Section \ref{lcqgs}).

\begin{theorem}\label{cptThm}
Let $\GG$ and $\HH$ be locally compact quantum groups and let $\HH$ be a closed quantum subgroup of $\GG$. Then $\HH$ is compact if and only if there exists a non-zero element $x\in\Linf(\GG/\HH)$ which is square integrable with respect to the Haar measure of $\GG$.
\end{theorem}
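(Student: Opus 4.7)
The plan is to work throughout with the right action $\alpha\colon\Linf(\GG)\to\Linf(\GG)\vtens\Linf(\HH)$ associated to the embedding $\HH\hookrightarrow\GG$. By Theorem \ref{V2int} this action is integrable, and by Theorem \ref{canonical} its canonical implementation is the bicharacter $V=(\gamma\tens\id)\ww^{\HH}$. Since $\alpha(x)=V(x\tens\I)V^*$, the fixed-point algebra of $\alpha$ inside $\Linf(\GG)$ coincides with $\gamma(\Linf(\hh\HH))'\cap\Linf(\GG)=\Linf(\GG/\HH)$. The problem thus reduces to deciding when $\Linf(\GG/\HH)$ meets $\Dom(\eta)$ nontrivially, which I attack using the canonical implementation identity recalled in Section \ref{sectCan}, namely $((\id\tens\omega)V)\eta(y)=\eta((\id\tens\omega)\alpha(y))$ for $\omega\in\Lone(\HH)$ and $y\in\Dom(\eta)$.

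For the direction ``$\HH$ compact $\Rightarrow$ existence of a nonzero square-integrable $x$'', compactness promotes the Haar weight $\bh^\HH$ to a state in $\Lone(\HH)$, so $E=(\id\tens\bh^\HH)\comp\alpha$ is a faithful normal conditional expectation onto $\Linf(\GG/\HH)$. Taking $\omega=\bh^\HH$ in the canonical implementation identity gives $((\id\tens\bh^\HH)V)\eta(y)=\eta(E(y))$ for every $y\in\Dom(\eta)$, so $E(\Dom(\eta))\subset\Dom(\eta)$. Combining faithfulness of $E$ with the $\sigma$-weak density of $\Dom(\eta)$ in $\Linf(\GG)$ produces a nonzero element of $\Linf(\GG/\HH)\cap\Dom(\eta)$.

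For the converse, take $x\in\Linf(\GG/\HH)$ nonzero with $\eta(x)\in\Ltwo(\GG)$. Since $\alpha(x)=x\tens\I$, the canonical implementation formula applied with arbitrary $\omega\in\Lone(\HH)$ yields
\[
\gamma\bigl((\id\tens\omega)\ww^{\HH}\bigr)\eta(x)
=\bigl((\id\tens\omega)V\bigr)\eta(x)
=\omega(\I)\eta(x).
\]
Because the slices $(\id\tens\omega)\ww^{\HH}$ span a $\sigma$-weakly dense $*$-subalgebra of $\Linf(\hh\HH)$, and $\omega\mapsto\omega(\I)$ reproduces the counit of $\hh\HH$ on those slices, the vector state $a\mapsto\tfrac{1}{\|\eta(x)\|^2}\is{\eta(x)}{\gamma(a)\eta(x)}$ is a \emph{normal} $*$-character of $\Linf(\hh\HH)$ extending that counit. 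The final step is to deduce from the existence of such a normal character that $\HH$ is compact.

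The main obstacle is this final step. My plan is a direct GNS-style argument using the convolution identity \eqref{dodatek} and Proposition \ref{Teza} from the Appendix (already employed in Section \ref{sectCan}) to show that the eigenvector $\xi=\eta(x)$ behaves like the GNS image of $\I_\HH\in\Linf(\HH)$, which is present in $\Ltwo(\HH)$ precisely when $\bh^\HH(\I)<\infty$; the classical analogue is the computation showing that $\lambda_h\xi=\chi(h)\xi$ for a character $\chi$ and $\xi\in\Ltwo(H)$ forces $|\xi|$ to be constant and hence $H$ to have finite Haar measure. A more robust alternative, bypassing characters entirely, is to invoke the Weil-type disintegration $\bh^\GG=\mu\comp T_\alpha$, where $T_\alpha=(\id\tens\bhL^\HH)\comp\alpha$ is the operator-valued weight produced by integrability of $\alpha$ and $\mu$ is a faithful $\GG$-quasi-invariant weight on $\Linf(\GG/\HH)$; the identity $T_\alpha(x^*x)=\bhL^\HH(\I)\,x^*x$ valid for $x\in\Linf(\GG/\HH)$ then forces $\bhL^\HH(\I)<\infty$ whenever $x$ is nonzero and $\bh^\GG(x^*x)<\infty$.
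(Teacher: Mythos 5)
Your forward implication (compactness of $\HH$ yields a nonzero square-integrable element of $\Linf(\GG/\HH)$) is correct and is in substance the paper's own argument: both hinge on Proposition \ref{Teza} and formula \eqref{dodatek}. The paper phrases it via the conditional expectation $E(a)=\mu\staru{a}$ with $\mu=\bh_\HH\comp\Lambda_\HH\comp\pi$ and the identity $\bh(\mu\staru{a})=\mu(\I)\bh(a)$, while you phrase it through the implementation identity $\eta\bigl(E(y)\bigr)=\bigl((\id\tens\bh_\HH)V\bigr)\eta(y)$ for $y\in\Dom(\eta)$; these are the same computation, so this half is fine.

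The genuine gap is in the converse, exactly where you flag it. Up to that point you coincide with the paper: it, too, derives the eigenvalue relation $\bigl((\id\tens\omega)V\bigr)\eta(x)=\omega(\I)\eta(x)$ and obtains from it the normal character $\hh{\eps}$ on $\Linf(\hh{\HH})$ (realized there as compression to the projection onto $\CC\eta(x)$). But neither of your two plans closes the argument. Plan (2) rests on a false claim: the factorization $\bh=\mu\comp{T_\alpha}$ with $T_\alpha=(\id\tens\bhL)\comp\alpha$ is precisely the Weil formula, and already for classical groups it holds if and only if the modular function of $\GG$ restricts on $\HH$ to the modular function of $\HH$; it fails, for instance, for $\GG=SL(2,\RR)$ and $\HH$ its Borel subgroup, where no measure on $\GG/\HH$ (quasi-invariant or not) factors Haar measure through $T_\alpha$ --- only the $\rho$-function-corrected formula survives, and that correction destroys the identity $T_\alpha(x^*x)=\bhL(\I)\,x^*x$ you want to exploit. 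Since compactness of $\HH$ \emph{implies} the modular compatibility, assuming this factorization comes close to assuming the conclusion. Plan (1) is a sketch, not a proof: its classical justification is a disintegration-of-measure argument (cosets carry infinite fiber measure when $\HH$ is non-compact), and you give no indication of how to carry this out for quantum groups. The missing step is, however, short, and it is how the paper finishes: $\hh{\eps}$ is a vector state, hence normal; your eigenvalue relation says exactly that $\hh{\eps}\bigl((\id\tens\omega)V\bigr)=\omega(\I)$ for all $\omega\in\Lone(\HH)$, i.e.\ $(\hh{\eps}\tens\id)V=\I$; and since every slice of $\ww^\HH$ --- hence of $V=(\gamma\tens\id)\ww^\HH$ --- by a normal functional in its first leg lies in $\C_0(\HH)$ (this is the property of the Kac--Takesaki operator recalled in Section \ref{lcqgs}), the equality $(\hh{\eps}\tens\id)V=\I$ exhibits $\I$ as an element of $\C_0(\HH)$, so $\C_0(\HH)$ is unital and $\HH$ is compact.
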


For the proof of Theorem \ref{cptThm} we need a few results about convolutions. Let $\GG$ be a locally compact quantum group. Then we have Banach algebra structures on spaces of functionals like $\Lone(\GG)$ or $\C_0^\uu(\GG)^*$. For the purposes of this section we will denote the (convolution) products on these spaces respectively by $\starr$ and $\staru$:
\[
\begin{aligned}
\omega_1\starr\omega_2&=(\omega_1\tens\omega_2)\comp\Delta_\GG,&&\quad\omega_1,\omega_2\in\Lone(\GG),\\
\mu_1\staru\mu_2&=(\mu_1\tens\mu_2)\comp\Delta_\GG^\uu,&&\quad\mu_1,\mu_2\in\C_0^\uu(\GG)^*.
\end{aligned}
\]
Moreover the adjoint of the reducing morphism $\Lambda_\GG\colon\C_0^\uu(\GG)\twoheadrightarrow\C_0(\GG)\subset\Linf(\GG)$ provides an isometry of $\iota$ of $\Lone(\GG)$ into $\C_0^\uu(\GG)^*$. The image of $\iota$ is a closed two sided ideal in $\C_0^\uu(\GG)^*$ (\cite[Proposition 5.3]{univ}) and denoting by $\cR_\mu$ the operator
\[
\omega\longmapsto\iota^{-1}\bigl(\iota(\omega)\staru\mu\bigr)
\]
we obtain a bounded map $\Lone(\GG)\to\Lone(\GG)$. The adjoint of this map, denoted by $\Linf(\GG)\ni{x}\mapsto\mu\staru{x}\in\Linf(\GG)$, is $\sigma$-weakly continuous and is in fact given by
\begin{equation}\label{dodatek}
\mu\staru{x}=(\id\tens\mu)\bigl(\wW^\GG(x\tens\I){\wW^\GG}^*\bigr)
\end{equation}
(this follows e.g.~from \cite[Proof of Proposition 5.3]{univ}). In particular for any $\omega\in\Lone(\GG)$ we have
\[
\iota(\omega)\staru{x}=\omega\starr{x},
\]
where $\omega\starr{x}$ is by definition $(\id\tens\omega)\Delta_\GG(x)$. Also it is not difficult to check that
\begin{equation}\label{omx}
\omega\starr(\mu\staru{x})=\cR_\mu(\omega)\starr{x}
\end{equation}
for all $\omega\in\Lone(\GG)$, $\mu\in\C_0^\uu(\GG)^*$ and $x\in\Linf(\GG)$.

By \cite[Lemma 3.4]{knr}, for $\mu\in\C_0^\uu(\GG)^*$ and $x\in\Linf(\GG)_+$ we have
\[
\bh(\mu\staru{x})=\mu(\I)\bh(x).
\]
In particular, it follows from the Kadison inequality that if $x\in\Linf(\GG)$ is square integrable for $\bh$ and $\mu\in\C_0^\uu(\GG)^*_+$ then $\mu\staru{x}$ is also square integrable for $\bh$.

\begin{proposition}\label{Teza}
Let $\GG$ be a locally compact quantum group. Then for $\mu\in\C_0^\uu(\GG)^*$ and $x\in\Dom(\eta)$ 
\begin{equation}\label{teza}
\eta(\mu\staru{x})=\bigl((\id\tens\mu)(\wW^\GG)\bigr)\eta(x)
\end{equation}
\end{proposition}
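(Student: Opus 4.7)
The plan is to prove \eqref{teza} by pairing both sides with $(\id\tens\omega)\ww^\GG$ for $\omega\in\Lone(\GG)$ and invoking the standard GNS intertwining
\[
\eta(\omega\starr y)=\bigl((\id\tens\omega)\ww^\GG\bigr)\eta(y),\qquad\omega\in\Lone(\GG),\ y\in\Dom(\eta),
\]
which is built into the definition of the Kac-Takesaki operator. Because the norm closure of $\bigl\{(\id\tens\omega)\ww^\GG:\omega\in\Lone(\GG)\bigr\}$ equals $\C_0(\hh{\GG})$, a \cst-algebra acting non-degenerately on $\Ltwo(\GG)$, it is enough to verify equality of the two vectors after multiplication by any such operator. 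The required membership $\mu\staru x\in\Dom(\eta)$ is stated for positive $\mu$ in the discussion preceding the proposition (Kadison's inequality combined with $\bh(\mu\staru y)=\mu(\I)\bh(y)$), and it extends to arbitrary $\mu\in\C_0^\uu(\GG)^*$ by decomposition into four positive pieces.

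For the left-hand side of \eqref{teza}, applying the GNS intertwining with $y=\mu\staru x$ and then using \eqref{omx} gives
\[
\bigl((\id\tens\omega)\ww^\GG\bigr)\eta(\mu\staru x)=\eta\bigl(\omega\starr(\mu\staru x)\bigr)=\eta\bigl(\cR_\mu(\omega)\starr x\bigr)=\bigl((\id\tens\cR_\mu(\omega))\ww^\GG\bigr)\eta(x),
\]
where the last equality is the GNS intertwining applied to $x$ with the functional $\cR_\mu(\omega)\in\Lone(\GG)$. The same multiplication applied to the right-hand side of \eqref{teza} produces $\bigl((\id\tens\omega)\ww^\GG\bigr)\bigl((\id\tens\mu)\wW^\GG\bigr)\eta(x)$, so the entire problem collapses to the purely operator-algebraic identity
\[
(\id\tens\cR_\mu(\omega))\ww^\GG=\bigl((\id\tens\omega)\ww^\GG\bigr)\bigl((\id\tens\mu)\wW^\GG\bigr).
\]

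To establish this last identity I would use $(\id\tens\nu)\ww^\GG=(\id\tens\iota(\nu))\wW^\GG$ for $\nu\in\Lone(\GG)$ (immediate from $\ww^\GG=(\id\tens\Lambda_\GG)\wW^\GG$) together with $\iota(\cR_\mu(\omega))=\iota(\omega)\staru\mu=(\iota(\omega)\tens\mu)\comp\Delta_\GG^\uu$, so that the left-hand side rewrites as $(\id\tens\iota(\omega)\tens\mu)\bigl((\id\tens\Delta_\GG^\uu)\wW^\GG\bigr)$. The pentagon relation $(\id\tens\Delta_\GG^\uu)\wW^\GG=\wW^\GG_{12}\wW^\GG_{13}$, obtained by applying $\Lambda_{\hh{\GG}}\tens\id\tens\id$ to the universal bicharacter identity $(\id\tens\Delta_\GG^\uu)\WW^\GG=\WW^\GG_{12}\WW^\GG_{13}$, then factors the expression as $\bigl((\id\tens\omega)\ww^\GG\bigr)\bigl((\id\tens\mu)\wW^\GG\bigr)$ after slicing out the middle leg. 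The chief bookkeeping hazard is the half-lifted structure of $\wW^\GG$: only its first leg has been reduced via $\Lambda_{\hh{\GG}}$, so it is precisely the pentagon acting on the universal right leg through $\Delta_\GG^\uu$ that is available, while the non-degeneracy of $\C_0(\hh{\GG})\curvearrowright\Ltwo(\GG)$ at the end requires no further ingredients.
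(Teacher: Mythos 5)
Your proposal is correct and follows essentially the same route as the paper's proof: both reduce the statement to the operator identity $(\id\tens\cR_\mu(\omega))\ww^\GG=\bigl((\id\tens\omega)\ww^\GG\bigr)\bigl((\id\tens\mu)\wW^\GG\bigr)$, proved via $\iota(\cR_\mu(\omega))=\iota(\omega)\staru\mu$, the half-lifted pentagon relation $(\id\tens\Delta_\GG^\uu)\wW^\GG=\wW^\GG_{12}\wW^\GG_{13}$ and slicing, and both conclude from the fact that slices $(\id\tens\omega)\ww^\GG$ approximate the identity (your non-degeneracy formulation is equivalent to the paper's strong approximation). Your explicit handling of $\mu\staru x\in\Dom(\eta)$ via decomposition into positive parts matches the paper's reduction to positive $\mu$ by linearity.
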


\begin{proof}
It is enough to prove this for positive $\mu$ because both sides of \eqref{teza} are linear in this variable.

Take $\omega\in\Lone(\GG)$. Using \cite[Proposition 1.10, formula (1.7)]{mnw} and \cite[Theorem 3.5, formula (3.6)]{mnw} we find that on one hand
\begin{equation}\label{onehand}
\eta\bigl((\cR_\mu\omega)\starr{x}\bigr)=(\id\tens{\cR_\mu\omega})(\ww^\GG)\eta(x)
\end{equation}
and on the other hand, by \eqref{omx},
\begin{equation}\label{otherhand}
\eta\bigl((\cR_\mu\omega)\starr{x})\bigr)=\eta\bigl(\omega\starr(\mu\staru{x})\bigr)=(\id\tens\omega)(\ww^\GG)\eta(\mu\staru{x}).
\end{equation}
In order to analyze the operator $(\id\tens{\cR_\mu\omega})(\ww^\GG)$ we compute:
\begin{equation}\label{onemorehand}\begin{split}
(\id\tens{\cR_\mu\omega})(\ww^\GG)&=\bigl(\id\tens\iota^{-1}(\iota(\omega)\staru\mu)\bigr)\ww^\GG\\
&=\bigl(\id\tens\iota(\omega)\staru\mu\bigr)\wW^\GG\\
&=\bigl(\id\tens\iota(\omega)\tens\mu\bigr)(\id\tens\Delta_\GG^\uu)\wW^\GG\\
&=\bigl(\id\tens\iota(\omega)\tens\mu\bigr)\wW^\GG_{12}\wW^\GG_{13}\\
&=\bigl(\id\tens\omega\tens\mu\bigr)\ww^\GG_{12}\wW^\GG_{13}\\
&=(\id\tens\omega)(\ww^\GG)(\id\tens\mu)(\wW^\GG).
\end{split}
\end{equation}

Comparing \eqref{onehand} and \eqref{otherhand} we find using \eqref{onemorehand} that
\[
(\id\tens\omega)(\ww^\GG)\eta(\mu\staru{x})=(\id\tens\omega)(\ww^\GG)(\id\tens\mu)(\wW^\GG)\eta(x).
\]
As operators of the form $(\id\tens\omega)(\ww^\GG)$ can strongly approximate the identity operator, we have proved \eqref{teza}.
\end{proof}

\begin{proof}[Proof of Theorem \ref{cptThm}]
Assume first that $\HH$ is a closed quantum subgroup of $\GG$ with a non-zero $x\in\Linf(\GG/\HH)$ square integrable for $\bh$.

Thus we have a normal unital and injective $*$-homomorphism $\gamma\colon\Linf(\hh{\HH})\hookrightarrow\Linf(\hh{\GG})$, the corresponding bicharacter $V=(\gamma\tens\id)\ww^\HH$ and the surjective Hopf $*$-homomorphism $\pi\colon\C_0^\uu(\GG)\to\C_0^\uu(\HH)$ such that
\[
V=(\id\tens\Lambda_\HH)(\id\tens\pi)(\wW^\GG).
\]
Moreover $\Linf(\hh{\HH})$ (now identified with a subalgebra of $\Linf(\hh{\GG})$) is the $\sigma$-weak closure of
\[
\big\{(\id\tens\omega)V\st\omega\in\B(\Ltwo(\HH))_*\bigr\}=\big\{(\id\tens\omega)V\st\omega\in\Lone(\HH)\bigr\}.
\]
For the purposes of this proof we will identify $\Linf(\hh{\HH})$ with its image in $\Linf(\hh{\GG})$ under $\gamma$. 

Let $P\in\B(\Ltwo(\GG))$ be the projection onto the one dimensional subspace spanned by $\eta(x)$. We will first show that for any $\omega\in\Lone(\HH)$ and $y=(\id\tens\omega)V$ we have
\begin{equation}\label{yP}
yP=\omega(\I)P.
\end{equation}
Indeed, let $\mu=\omega\comp\Lambda_\HH\comp\pi$. Then $y=(\id\tens\mu)\wW^\GG$, so
\begin{equation}\label{yeta}
y\eta(x)=(\id\tens\mu)\wW^\GG\eta(x)=\eta(\mu\staru{x})
\end{equation}
by Proposition \ref{Teza}.

Now $x\in\Linf(\GG/\HH)=\Linf(\hh{\HH})'\cap\Linf(\GG)$, so
\[
\begin{split}
\mu\staru{x}&=(\id\tens\mu)\bigl(\wW^\GG(x\tens\I){\wW^\GG}^*\bigr)\\
&=(\id\tens\omega)\bigl(V(x\tens\I)V^*\bigr)\\
&=(\id\tens\omega)(x\tens\I)=\omega(\I){x},
\end{split}
\]
which in view of \eqref{yeta} proves \eqref{yP}.

Equality \eqref{yP} for all $\omega$ shows that for any $y\in\Linf(\hh{\HH})$ the operator $yP$ is proportional to $P$. Thus we can define a $\sigma$-weakly continuous functional $\hh{\eps}\colon\Linf(\hh{\HH})\to\CC$ by
\[
yP=\hh{\eps}(y)P
\]
(it is not difficult to check that $\hh{\eps}$ is in fact a $*$-homomorphism and, in particular, a state).

For $y=(\id\tens\omega)V$ we get $\hh{\eps}(y)=\omega(\I)$ or, in other words
\[
(\hh{\eps}\tens\omega)V=\omega(\I).
\]
As this holds for all $\omega$, we find that
\[
(\hh{\eps}\tens\id)V=\I.
\]
It follows that $\I$ belongs to $\C_0(\HH)$, so $\HH$ is compact.

Assume now that $\HH$ is a compact subgroup of $\GG$. In \cite[Theorem 5.1]{exa} a conditional expectation $\C_0^\uu(\GG)\to\C_0^\uu(\GG/\HH)$ was defined (see \cite[Example 5.3(2)]{eqhs}), but in fact the arguments used there show that we also have the conditional expectation on the reduced level: $E\colon\C_0(\GG)\to\C_0(\GG/\HH)$
\[
E(a)=(\id\tens\bh_{\HH})\bigl(V(a\tens\I)V^*\bigr)
\]
where $\bh_{\HH}$ is the Haar measure of $\HH$. Note that $E(a)=\mu\staru{a}$, where $\mu=\bh_{\HH}\comp\Lambda_\HH\comp\pi$ (cf.~the first part of the proof). By remarks preceding the proof of the theorem we find that there are many non-zero elements of $\Linf(\GG/\HH)$ which are square integrable for the right Haar measure of $\GG$.
\end{proof}

One can remark that the character $\hh{\eps}$ constructed in the proof of Theorem \ref{cptThm} can easily be shown to satisfy $(\hh{\eps}\tens\id)\comp\Delta_{\hh{\HH}}=\id$, so that $\hh{\eps}$ is the counit on $\Linf(\hh{\HH})$.

We end with a re-statement of Theorem \ref{cptThm} in terms of integrability (cf.~Corollary \ref{openInt}).

\begin{corollary}\label{final}
Let $\GG$ and $\HH$ be locally compact quantum groups and let $\HH$ be a closed quantum subgroup of $\GG$. Then $\HH$ is compact if and only if the left action of $\GG$ on $\GG/\HH$ given by $\Delta_\GG$ is integrable.
\end{corollary}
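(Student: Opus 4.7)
The plan is to combine Theorem \ref{cptThm} with the left analog of Proposition \ref{PropIntegr}, bridging the two by the right-invariance of $\bh$. First I would observe that $\Linf(\GG/\HH)$ is a left coideal of $\Linf(\GG)$ (by its very definition as the co-dual of $\gamma(\Linf(\hh{\HH}))$), so the restriction $\beta := \bigl.\Delta_\GG\bigr|_{\Linf(\GG/\HH)}$ is a genuine left action $\beta\colon\Linf(\GG/\HH)\to\Linf(\GG)\vtens\Linf(\GG/\HH)$ of $\GG$ on the homogeneous space. Next I would verify that $\beta$ is ergodic: if $x\in\Linf(\GG/\HH)$ satisfies $\Delta_\GG(x)=\I\tens{x}$, then the same identity holds for $x$ viewed in $\Linf(\GG)$, and the standard ergodicity of the left translation action of $\GG$ on itself (immediate from $\ww^\GG(x\tens\I)=(\I\tens{x})\ww^\GG$ together with slice density of the left leg of $\ww^\GG$) forces $x\in\CC\I$.

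The crucial link is that right-invariance of $\bh$ yields $(\bh\tens\id)\Delta_\GG(x)=\bh(x)\I$ for every $x\in\Linf(\GG)_+$, with both sides interpreted in the extended positive part. Consequently, for $x\in\Linf(\GG/\HH)_+$, the element $(\bh\tens\id)\beta(x)$ lies in $\Linf(\GG/\HH)_+$ if and only if $\bh(x)<\infty$. In other words, the integrable elements for the left action $\beta$ (in the sense of Section \ref{subsAct}) are exactly the $\bh$-integrable positive elements of $\Linf(\GG/\HH)$. Applying the left analog of Proposition \ref{PropIntegr} to the ergodic action $\beta$, integrability of $\beta$ is equivalent to the mere existence of a single non-zero $y\in\Linf(\GG/\HH)_+$ with $\bh(y)<\infty$. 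Writing $y=x^*x$, this matches the existence of a non-zero $x\in\Linf(\GG/\HH)$ square integrable for $\bh$, which by Theorem \ref{cptThm} is equivalent to compactness of $\HH$. Chaining these equivalences gives the corollary.

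The only item requiring care is bookkeeping: one must apply the left-action version of Proposition \ref{PropIntegr} (explicitly noted by the authors immediately after its proof) and use the correct side of right-invariance of $\bh$, namely $(\bh\tens\id)\Delta_\GG=\bh(\cdot)\I$, so that $\bh$-finiteness of a positive element of the coideal corresponds exactly to integrability under $\beta$. Beyond these routine convention checks and the short ergodicity argument, I do not foresee any serious obstacle, since all the substantive content has already been packaged into Theorem \ref{cptThm} and Proposition \ref{PropIntegr}.
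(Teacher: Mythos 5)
Your proposal is correct and is essentially the paper's own argument: the authors state Corollary \ref{final} as an immediate re-statement of Theorem \ref{cptThm}, the intended justification being exactly your chain --- restrict $\Delta_\GG$ to the left coideal $\Linf(\GG/\HH)$ to get an ergodic left action, use right invariance of $\bh$ to identify its integrable elements with the $\bh$-integrable positive elements of $\Linf(\GG/\HH)$, and apply the left analog of Proposition \ref{PropIntegr}, precisely mirroring how Corollary \ref{openInt} follows from Theorem \ref{openThm}. The only slight gloss is your parenthetical ergodicity sketch, which additionally needs the standard fact $\Linf(\GG)\cap\Linf(\hh{\GG})=\CC\I$ after concluding $x\in\Linf(\hh{\GG})$, but this triviality of fixed points of the comultiplication is a known result that the paper itself invokes without proof in Theorem \ref{openThm}.
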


\subsection*{Acknowledgements}

The second author was partially supported by the Ministry of Science of Iran and wishes to thank the Department of Mathematical Methods in Physics, Faculty of Physics, University of Warsaw for warm hospitality. The first and third authors were partially supported by the National Science Center (NCN) grant no.~2015/17/B/ST1/00085.


\begin{thebibliography}{66}

\bibitem{BS}
S.~Baaj \& G.~Skandalis: Unitaires multiplicatifs et dualit\'e pour les produits crois\'es de $\mathrm{C}^*$-alg\`ebres. \emph{Ann.~Scient.~\'Ec.~Norm.~Sup.}, $4^{\text{\tiny e}}$ s\'erie, t.~\textbf{26} (1993), 425--488.
\bibitem{BV}
S.~Baaj \& S.~Vaes: Double crossed products of locally compact quantum groups, \emph{J.~Inst.~Math.~Jussieu} \textbf{4} (2005), 135--173.
\bibitem{ConnesTakesaki}
A.~Connes \& M.~Takesaki: The flow of weights on factors of type III, \emph{Tohoku Math.~J.} \textbf{2} (1977), 473--575.
\bibitem{DKSS}
M.~Daws, P.~Kasprzak, A.~Skalski \& P.M.~So{\l}tan: Closed quantum subgroups of locally compact quantum groups, \emph{Adv.~Math.} \textbf{231} (2012), 3473--3501.
\bibitem{DeCommer}
K.~De Commer: Galois Objects and Cocycle Twisting for Locally Compact Quantum Groups. \emph{J.~Operator Theory} \textbf{66} (2011), 59--106.
\bibitem{Enock}
M. Enock: Sous-facteurs interm\'ediaries et groupes quantiques mesur\'es, \emph{ J.~Op.~Theory} \textbf{42} (1999), 305--330.
\bibitem{ovw1}
U.~Haagerup: Operator valued weights in von Neumann algebras, I, \emph{J.~Funct.~Anal.} \textbf{32} (1979), 175--206.
\bibitem{HNR}
Z.~Hu, M.~Neufang \& Z.-J.~Ruan: Module maps over locally compact quantum groups, \emph{Studia Math.} \textbf{211} (2012), 111--145.
\bibitem{open}
M.~Kalantar, P.~Kasprzak \& A.~Skalski: Open quantum subgroups of locally compact quantum groups, Preprint \texttt{arXiv:1511.03952 [math.OA]}.
\bibitem{knr}
M.~Kalantar, M.~Neufang \& Z.-J.~Ruan: Poisson Boundaries over Locally Compact Quantum Groups, \emph{Int.~J.~Math.} \textbf{24} (2013), 1350023-1--1350023-21.
\bibitem{centers}
P.~Kasprzak, A.~Skalski \& P.M.~So{\l}tan: Short exact sequence $\xymatrix@1@C-2ex{\{e\}\ar[r]&\mathscr{Z}(\GG)\ar[r]&\GG\ar[r]&\mathrm{Inn}(\GG)\ar[r]&\{e\}}$ for locally compact quantum groups, Preprint \texttt{arXiv:1508.02943 [math.OA]}.
\bibitem{eqhs}
P.~Kasprzak \& P.M.~So{\l}tan: Embeddable quantum homogeneous spaces, \emph{J.~Math.~Anal.~Appl.} \textbf{411} (2014), 574-591.
\bibitem{proj}
P.~Kasprzak \& P.M.~So{\l}tan: Quantum groups with projection on von Neumann algebra level, \emph{J.~Math.~Anal. Appl.} \textbf{427} (2015), 289--306.
\bibitem{univ}
J.~Kustermans: Locally compact quantum groups in the universal setting. \emph{Int.~J.~Math.} \textbf{12} (2001) 289--338.
\bibitem{induced}
J.~Kustermans: Induced corepresentations of locally compact quantum groups, \emph{J.~Funct.~Anal.} \textbf{194} (2002), 410--459.
\bibitem{KV}
J.~Kustermans \& S.~Vaes: Locally compact quantum groups, \emph{Ann.~Scient.~\'{E}c.~Norm.~Sup.} $4^{\text{\tiny e}}$ s\'{e}rie, t.~\textbf{33} (2000), 837--934.
\bibitem{KVvN}
J.~Kustermans \& S.~Vaes: Locally compact quantum groups in the von Neumann algebraic setting. \emph{Math. Scand.} \textbf{92} (2003), 68--92.
\bibitem{mnw}
T.~Masuda, Y.~Nakagami \& S.L.~Woronowicz: A $\mathrm{C}^*$-algebraic framework for the quantum groups, \emph{Int.~J. Math.} \textbf{14} (2003), 903--1001.
\bibitem{MRW}
R.~Meyer, S.~Roy \& S.~L.~Woronowicz: Homomorphisms of quantum groups, \emph{M\"unster J.~Math} \textbf{5} (2012), 1--24.
\bibitem{PodSLW}
P.~Podle\'s \& S.L.~Woronowicz: Quantum deformation of Lorentz group. \emph{Comm.~Math.~Phys.} \textbf{130} (1990), 381--431.
\bibitem{nazb}
P.M.~So\l{}tan: New quantum ``$az+b$'' groups, \emph{Rev.~Math.~Phys.} \textbf{17} (2005), 313--364.
\bibitem{exa}
P.M.~So{\l}tan: Examples of non-compact quantum group actions. \emph{J.~Math.~Anal.~Appl.} \textbf{372} (2010), 224--236.
\bibitem{mu2}
P.M.~So{\l}tan \& S.L.~Woronowicz: From multiplicative unitaries to quantum groups II, \emph{J.~Funct.~Anal.} \textbf{252} (2007), 42--67.
\bibitem{SZ}
\textcommabelow{S}.~Str\u{a}til\u{a} \& L.~Zsid\'o: \emph{Lectures on von Neumann algebras}. Abacus Press 1979.
\bibitem{TT}
M.~Takesaki \& N.~Tatsuuma: Duality and subgroups. \emph{Ann.~Math.} \textbf{93} (1971), 344--364.
\bibitem{bicrossed}
S.~Vaes: Examples of locally compact quantum groups through the bicrossed product construction. 
In \emph{Proceedings of the XIIIth International Congress of Mathematical Physics}, Imperial College, London (2000), International Press of Boston, Somerville MA (2001), pp.~341--348.
\bibitem{VaesPhD}
S.~Vaes: Locally compact quantum groups. PhD thesis, Katholieke Universiteit Leuven, Faculteit Wetenschappen, Departement Wiskunde (2001). Available at \texttt{http://perswww.kuleuven.be/\~{}u0018768/PhD.html}.
\bibitem{impl}
S.~Vaes: The unitary implementation of a locally compact quantum group action, \emph{J.~Funct.~Anal.} \textbf{180} (2001), 426--480.
\bibitem{impr}
S.~Vaes: A new approach to induction and imprimitivity results. \emph{J.~Funct.~Anal.} \textbf{229} (2005), 317--374.
\bibitem{VaesVainerman}
L.~Vainerman \& S.~Vaes: Extensions of locally compact quantum groups and the bicrossed product construction, \emph{Adv.~Math.} \textbf{175} (2003), 1--101.
\bibitem{low}
L.~Vainerman \& S.~Vaes: On low-dimensional locally compact quantum groups. In \emph{Locally Compact Quantum Groups and Groupoids}. Proceedings of the Meeting of Theoretical Physicists and Mathematicians, Strasbourg, February 21-23, 2002., Ed.~L.~Vainerman, IRMA Lectures on Mathematics and Mathematical Physics, Walter de Gruyter, Berlin, New York (2003), pp. 127--187.
\bibitem{e2}
S.L.~Woronowicz: Quantum $E(2)$-group and its Pontryagin dual, \emph{Let.~Math.~Phys.} \textbf{23} (1991), 251--263.
\bibitem{unbo}
S.L.~Woronowicz: Unbounded elements affiliated with $\mathrm{C}^*$-algebras and non-compact quantum groups. \emph{Commun.~Math.~Phys.} \textbf{136} (1991), 399--432.
\bibitem{gen}
S.L.~Woronowicz: $\mathrm{C}^*$-algebras generated by unbounded elements. \emph{Rev.~Math.~Phys.} \textbf{7}, (1995), 481--521.
\bibitem{mu}
S.L.~Woronowicz: From multiplicative unitaries to quantum groups. \emph{Int.~J.~Math.} \textbf{7}, (1996), 127--149.
\bibitem{azb}
S.L.~Woronowicz: Quantum `$az+b$' group on complex plane. \emph{Int.~J.~Math.} \textbf{12}, (2001), 461--503.
\end{thebibliography}
\end{document}